\documentclass{amsart}

\usepackage{amsfonts, amsmath, amssymb, amsthm}
\usepackage{mathrsfs}
\usepackage{array, multirow}
\usepackage{tikz, graphicx}
\usepackage{textcomp}
\usetikzlibrary{positioning, scopes}

\usepackage[margin=1.3in]{geometry}

\newcommand{\real}{\mathbb{R}}

\newtheorem{theorem}{Theorem}[section]
\newtheorem{lemma}[theorem]{Lemma}
\newtheorem{corollary}[theorem]{Corollary}
\newtheorem{question}[theorem]{Question}

\newtheorem{proposition}[theorem]{Proposition}
\newtheorem{definition}[theorem]{Definition}

\newtheorem*{proposition*}{Proposition}

\theoremstyle{definition} 
\newtheorem{remark}[theorem]{Remark}

\begin{document}

\title[AREA AND ANTIPODAL DISTANCE IN CONVEX HYPERSURFACES]{Area and antipodal distance in convex hypersurfaces}

\author[J. DIBBLE]{JAMES DIBBLE}
\address{Department of Mathematics and Statistics, University of Southern Maine, Portland, ME 04101}
\email{james.dibble@maine.edu}

\author[J.A. HOISINGTON]{JOSEPH ANSEL HOISINGTON}
\address{Department of Mathematics, Rose--Hulman Institute of Technology, Terre Haute, IN 47803}
\email{hoisingt@rose-hulman.edu}

\keywords{Convex hypersurfaces, universal volume bounds, isoembolic inequalities}
\subjclass[2020]{Primary 52A38, 52A40, 53C23; Secondary 52A39, 53C45, 53C65}

\date{}

\begin{abstract}
We establish a lower bound for the surface area of a closed, convex hypersurface in Euclidean space in terms of its displacement under continuous maps. As a result, a hypothesized lower bound for the volume of a Riemannian $n$-sphere, proved by Berger in dimension $n=2$ and disproved by Croke in dimensions $n \geq 3$, is valid for convex hypersurfaces in all dimensions. We also establish a sharp lower bound for the mean width of a convex hypersurface.  
\end{abstract}

\maketitle


\section{Introduction}
\label{intro} 


The primary goal of this paper is to establish the following lower bound for the $n$-dimensional Hausdorff measure of a closed, convex hypersurface in terms of its metric properties.

\begin{theorem}\label{main theorem}
For each natural number $n$, there exists $h_n > 0$ such that, if $M^n$ is a closed, convex hypersurface in $\real^{n+1}$ with intrinsic distance $d_{M}$ and $n$-dimensional Hausdorff measure $\mathrm{Area}(M)$, $\alpha$ is a continuous map from $M$ to itself, and $\mu(\alpha) = \displaystyle \min_{x \in M} d_M(x,\alpha(x))$, then
\[
\mathrm{Area}(M) > h_n \mu(\alpha)^n.
\]
\end{theorem}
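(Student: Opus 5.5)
We will reduce the statement to a comparison between $\mu(\alpha)$ and the extrinsic size of the convex body $K$ bounded by $M$. The first step is the upper bound $\mu(\alpha) \le \mathrm{diam}_{d_M}(M)$, which is trivial. The intrinsic diameter of a convex hypersurface is in turn controlled by its extrinsic diameter: for $x,y\in M$, projecting the segment $[x,y]$ (or a suitable planar curve through it) onto $M$ along a convex cross-section gives $d_M(x,y)\le \tfrac{\pi}{2}\,\mathrm{diam}(K)$. A cleaner bound of the same kind comes from the two-dimensional convex section of $M$ by a plane containing $x$ and $y$. The perimeter of a planar convex curve is at most $\pi$ times its diameter, so half the perimeter is at most $\tfrac{\pi}{2}\mathrm{diam}(K)$. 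Thus $\mu(\alpha)\le C\,D$, where $D=\mathrm{diam}(K)$.

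Diameter alone does not bound area from below, because thin needles have large diameter and tiny area. So the crude bound above cannot suffice, and the real content must use the topology of $\alpha$. The key point is that a map with large displacement cannot be too close to the identity. Let $\mu=\mu(\alpha)$, and suppose $K$ is very thin in some direction. Then we would show $\mu$ is controlled by a quantity like the width of $K$ in the ``large'' directions. Concretely, use John's ellipsoid: there is an affine ellipsoid $E$ with $E\subset K\subset nE$ (after translation), with semi-axes $a_1\ge\dots\ge a_{n+1}$. The area of $M$ is comparable, with dimensional constants, to $\sum_i\prod_{j\ne i}a_j\gtrsim a_1a_2\cdots a_n$. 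It therefore suffices to prove $\mu(\alpha)\le C_n\, a_n$, and in fact the stronger $\mu\le C_n\, a_n$ with $a_n$ the second-smallest axis.

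To prove $\mu\le C_n a_n$, project $K$ orthogonally onto the $(n-1)$-dimensional subspace $W$ spanned by the first $n-1$ axes. The fibers are two-dimensional convex sets $K_w$ of diameter $\le C_n a_n$. For each $w$ in the projection, the fiber boundary $M\cap(w+W^\perp)$ is a closed convex curve. In the region where the fiber is nondegenerate, any two of its points are at intrinsic distance $\le C a_n$ by the perimeter bound. The topological step is this: if every point moved distance $>C_n a_n$, then $\alpha(x)$ would never lie in the fiber of $x$ nor in nearby fibers. Composing with the projection, $\pi\circ\alpha$ would then have no ``near-coincidence'' with $\pi$. We plan to derive a contradiction from degree theory. $M$ is an $n$-sphere, and $\pi:M\to\pi(K)\cong D^{n-1}$ has $S^1$ fibers (degenerating on the boundary). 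We would construct a homotopy from $\alpha$ to a map that moves each point by a bounded amount within its fiber, and then use a Lefschetz/Borsuk–Ulam type argument. The constraint $d_M(x,\alpha(x))>C a_n$ forces $\pi\alpha(x)\ne\pi(x)$ away from a small neighborhood, so $\pi\alpha$ and $\pi$ are separated. Straight-line homotopy in $\pi(K)$ plus a lift would then let us build a fixed-point-free map homotopic to the identity of $M$ that also has nonzero degree on fibers. This contradicts the fact that a map $S^n\to S^n$ without fixed points has degree $(-1)^{n+1}$, once we also exploit the fiber structure.

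The main obstacle is exactly this topological step. Fixed-point-free maps of $S^n$ exist (the antipodal map), so a pure fixed-point argument is false. The antipodal map of a thin pancake does move points far, but only about distance $a_n$-ish along short fibers. The correct statement must therefore be quantitative: there is some $x$ with $d_M(x,\alpha(x))\lesssim$ the intrinsic diameter of the shortest ``essential'' sweepout, here the fiber circles. I would try to prove it via a Lusternik–Schnirelmann/width argument. The fiber circles, together with the degeneration to points at the boundary of $\pi(K)$, give a sweepout of $M$ by cycles of diameter $\le C_n a_n$. If $\alpha$ displaced every point by more than that diameter, then $\alpha$ together with geodesic interpolation would produce a homotopy contradicting the nontriviality of the sweepout's fundamental class. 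Making this rigorous on a possibly nonsmooth $M$, with fibers collapsing near $\partial\pi(K)$, is where I expect the real work.
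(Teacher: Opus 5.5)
Your reduction is sound, but the step that carries all of the content is never proved. With John's ellipsoid (in $\real^{n+1}$ the factor is $n+1$, not $n$), monotonicity of area under inclusion (Proposition \ref{crofton corollary}) gives $\mathrm{Area}(M) \geq \mathrm{Area}(\partial E) > 2\omega_n a_1 \cdots a_n \geq 2\omega_n a_n^n$. Now let $\pi$ be orthogonal projection onto the span $W$ of the $n-1$ longest axes. For any $x$ with $\pi(\alpha(x)) = \pi(x)$, both $x$ and $\alpha(x)$ lie on one slice $M \cap \pi^{-1}(w)$. That slice is either a planar convex curve of diameter at most $2(n+1)a_n$, so $d_M(x,\alpha(x))$ is at most half its perimeter, hence at most $\pi(n+1)a_n$. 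Or, when $w \in \partial \pi(K)$, it is a convex set lying in $M$.

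So everything hinges on the existence of such a coincidence point, and you leave this open. The tools you suggest do not supply it. The contradiction you want, between ``a fixed-point-free map homotopic to the identity'' and degree $(-1)^{n+1}$, is vacuous for odd $n$, where such maps exist (flow briefly along a nowhere-vanishing vector field; cf.\ the end of Appendix \ref{int_ext_appendix}). The sweepout/Lusternik--Schnirelmann argument is never actually formulated. Your remark that fixed-point arguments are doomed because the antipodal map exists poses the fixed-point problem on the wrong space.

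The missing idea is Brouwer's theorem on the $(n-1)$-dimensional base $P = \pi(K)$; it needs no hypothesis on $\alpha$. Over $\mathrm{int}\,P$ the slices $K_w$ are two-dimensional and vary continuously. So you can define $s(w)$ as the point where the ray from the centroid of $K_w$, in a fixed direction $e \in W^\perp$, leaves $K_w$; this is a continuous section of $\pi|_M$ there. Fix $c \in \mathrm{int}\,P$, let $P_\epsilon = c + (1-\epsilon)(P-c)$, and let $R_\epsilon$ be the nearest-point retraction onto $P_\epsilon$. Then $R_\epsilon \circ \pi \circ \alpha \circ s : P_\epsilon \to P_\epsilon$ has a fixed point $w_\epsilon$. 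The point $x_\epsilon = s(w_\epsilon)$ satisfies $|\pi(\alpha(x_\epsilon)) - \pi(x_\epsilon)| \le \epsilon\, \mathrm{diam}(P)$, and any limit point of the $x_\epsilon$ is a coincidence point. For $n=2$ this is just the intermediate value theorem.

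With this filled in, you get the theorem with $h_n = 2\omega_n/(\pi(n+1))^n$, by a route genuinely different from the paper's. The paper splits on $\rho(\alpha)$. When intrinsic displacement is large relative to extrinsic, it uses Proposition \ref{intrinsic-extrinsic proposition 1} (plane sections plus Chakerian's inequality). Otherwise it uses surjectivity of the chordal Gauss map (via Hirsch--Milnor) together with P\'al--Firey and the isoperimetric inequality. Your route is shorter and never uses fixed-point-freeness or the homotopy classification of fixed-point-free maps. The John factor, however, costs a lot in the constant. Yours decays like $n^{-3n/2}$ up to exponential factors, versus roughly $n^{-n}$ for the paper. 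For $n=2$ it gives $\frac{2}{9\pi} \approx 0.07$ rather than $0.22$.
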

\noindent Below, we discuss a question originating in the work of Berger \cite{Berger1977} that Theorem \ref{main theorem} partially addresses. 
\smallskip 

Throughout this paper, we refer to the $n$-dimensional Hausdorff measure of a convex hypersurface in $\real^{n+1}$ as its area, to distinguish it from the $(n+1)$-dimensional volume of the domain it encloses. A fundamental rigidity result for convex hypersurfaces, proved in increasing generality in work of Cauchy \cite{Cauchy1813, AignerZiegler1999}, Cohn-Vossen \cite{CohnVossen1936}, Herglotz \cite{Herglotz1943}, Pogorelov \cite{Pogorelov1973}, Sen'kin \cite{Senkin1972}, and, recently in full generality, Borisenko \cite{Borisenko2025}, states that, for $n \geq 2$, closed, convex hypersurfaces in $\real^{n+1}$ that are intrinsically isometric are congruent by an isometry of $\real^{n+1}$. In this sense, the intrinsic geometry of a convex hypersurface determines its extrinsic geometry. The proof of Theorem \ref{main theorem} is based on the interplay between these intrinsic and extrinsic geometries. One of the estimates in the proof is the following lower bound for the area of a convex hypersurface in terms of the intrinsic and extrinsic distance between any two points.


\begin{proposition}\label{intrinsic-extrinsic proposition 1}
For each natural number $n$, there exists an increasing, continuous function $\mathcal{I}_n : (1,\infty) \to (0, \frac{\omega_n}{2^{n-1}})$, where $\omega_n$ is the volume of the unit $n$-ball, such that if $M^n$ is a closed, convex hypersurface in $\real^{n+1}$ with intrinsic metric $d_{M}$ and $x,y$ are points in $M$ with $\frac{d_M(x,y)}{|y-x|} \geq \rho$, then
\[
\mathrm{Area}(M) > \mathcal{I}_n(\rho) d_M(x,y)^n.
\]
\end{proposition}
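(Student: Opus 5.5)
The plan is to show that the convex body $K$ bounded by $M$ has a large orthogonal projection onto the hyperplane perpendicular to $y-x$, and then use the fact that a convex hypersurface has area at least twice the $n$-volume of any of its orthogonal projections. Write $D = d_M(x,y)$ and $L = |y-x| \le D/\rho$. Since the explicit target is $\mathcal{I}_n(\rho) < \omega_n/2^{n-1}$, I will aim for $\mathcal{I}_n(\rho) = c\,\omega_n(1-1/\rho)^n/2^{n-1}$ for some $c \le 1$. This function is increasing and continuous on $(1,\infty)$ and takes values in $(0,\omega_n/2^{n-1})$.

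\emph{Step 1 (planar sections).} Fix a $2$-plane $P$ containing the line $\ell$ through $x$ and $y$, and fix one of the two open half-planes of $P$ bounded by $\ell$. The curve $M \cap P$ bounds the planar convex set $K\cap P$. Its arc from $x$ to $y$ lying in the closed chosen half-plane is a path in $M$, so its length is at least $D$. Let $h$ be the largest distance from a point of this arc to $\ell$. I want to conclude $h \ge (D-L)/2 \ge D(1-1/\rho)/2$. Write the arc in coordinates parallel and perpendicular to $\ell$. By convexity, the perpendicular coordinate first increases and then decreases, so its total variation is at most $2h$. If the arc's parallel coordinate stays within the chord $[x,y]$ and is monotone, its total variation is exactly $L$. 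The length is then at most $L+2h$, which gives the bound on $h$.

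\emph{Step 2 (projection).} Let $\pi$ be the orthogonal projection onto the hyperplane $H=(y-x)^\perp$, and translate so that $\pi(x)=\pi(y)=0$. Every unit vector $u\in H$ determines a half-plane of the kind in Step 1, so $\pi(K)$ contains a point $r u$ with $r \ge D(1-1/\rho)/2$. Since $\pi(K)$ is convex and contains $0$, it contains the whole segment from $0$ to that point. Its radial function is therefore at least $D(1-1/\rho)/2$ in every direction, so $\pi(K)$ contains the $n$-ball of that radius. Almost every line parallel to $y-x$ that meets the interior of $\pi(K)$ crosses $M$ at least twice. Hence $\mathrm{Area}(M) \ge 2\,\mathrm{vol}_n(\pi(K)) \ge 2\omega_n (1-1/\rho)^n D^n/2^n$. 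Strictness follows by taking $c<1$ (say $c=1/2$), or by noting that equality would force $M$ to be a doubly covered flat ball, which is incompatible with $D>L$.

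\emph{Main obstacle.} The delicate point is the length bound in Step 1 when the arc overshoots the chord, meaning its parallel coordinate goes beyond $x$ or $y$. In that case the parallel variation can exceed $L$, and $L+2h$ no longer bounds the length. My first approach is a two-case argument in each plane. If the total overshoot is at most $\delta D$ for a small $\delta=\delta(\rho)$, the length bound becomes $L+2\delta D+2h$, and $h \gtrsim D(1-1/\rho-2\delta)/2$ still holds. If the overshoot is large, say beyond $y$, then the arc leaves the slab between the hyperplanes through $x$ and $y$ perpendicular to $\ell$. The arc is convex and contains both $x$ and $y$, so after it passes beyond $y$ it cannot stay close to $\ell$ for a parallel distance comparable to $D$ without its height being at least a fixed fraction of that distance. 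Making this quantitative in terms of $\delta$ and $\rho$ is the estimate I expect to require the most care. The cost is only a worse constant in $\mathcal{I}_n$, which is harmless because the proposition requires only some increasing, continuous $\mathcal{I}_n$ with values in $(0,\omega_n/2^{n-1})$.
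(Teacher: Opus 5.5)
There is a genuine gap, and it is exactly the point you flagged as the main obstacle. The overshoot case does not merely cost a constant: both the height bound of Step 1 and the ball of Step 2 are false in general.

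\emph{The planar claim fails.} Let $Q$ be the parallelogram with vertices $q=(-T,-h)$, $y=(L,0)$, $p=(L+T,h)$, $x=(0,0)$, in that cyclic order. Then $\overline{xy}$ is a diagonal of $Q$, and both arcs of $\partial Q$ from $x$ to $y$ have length about $L+2T$. So the ratio is about $1+2T/L$, which can be as large as you like. Yet both arcs stay within distance $h$ of $\ell_{xy}$, with $h>0$ arbitrary. This also refutes your proposed fix: the arc $x\to p\to y$ runs a parallel distance of about $(D-L)/2$ beyond $y$ while never exceeding height $h$.

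\emph{The conclusion you need also fails.} In $\mathbb{R}^3$ take $K=Q\times[-R,R]$ with $L=1$ and $T=R$. A path in $M=\partial K$ from $x$ to $y$ either meets the top or bottom face, and then has length at least $2R$, or it projects onto an arc of $\partial Q$, and then has length at least $2R+1$. Hence $d_M(x,y)\geq 2R$ while $|y-x|=1$. But the projection of $K$ onto $(y-x)^\perp$ is $[-h,h]\times[-R,R]$, whose area $4hR$ is $o(d_M(x,y)^2)$ as $h\to 0$. So no argument that bounds $\mathrm{Area}(M)$ only through the projection onto $(y-x)^\perp$ can give any positive $\mathcal{I}_n(\rho)$. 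In this example the area sits in the lateral faces, which project well onto the plane spanned by $\ell_{xy}$ and $e_3$.

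\emph{The missing idea.} You need to choose the projection hyperplane according to how $M$ is supported at $x$ and $y$. The paper does this as follows.
\begin{itemize}
\item It takes supporting hyperplanes $H_x^s$, $H_y^s$ and the subspace $P_x=N_x\cap H_x^s\cap\tilde H_x$. For $z\in P_x$, the lines through $x$ and $y$ perpendicular to $\ell_{xy}$ in the plane $\Pi_z$ are support lines of $\kappa_z$. So there is no overshoot, and your Step 1 (this is Corollary \ref{width lower bound for curves 2}) gives a ball of radius $\frac{\rho-1}{2\rho}d_M(x,y)$ in the projection onto $P_x$.
\item Only when $P_x$ is a hyperplane (Case 1, cf.\ Corollary \ref{asymptotically optimal cases}) does this reduce to your argument and your constant.
\item When $P_x$ is not a hyperplane, the one or two missing directions are recovered in single $2$-planes. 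This uses either Lemma \ref{width lower bound for curves} (one perpendicular support line) or Lemma \ref{max width bound} (the width along $\ell_{xy}$ or across it is at least $\frac{1}{2}d_M(x,y)$; your overshoot case is the former).
\item Chakerian's inequality (Lemma \ref{chakerian}), applied once or twice, then turns a large lower-dimensional projection plus a long orthogonal segment into a large projection onto $N_x$ or onto a hyperplane $W\supset\ell_{xy}$.
\end{itemize}
Some case analysis of this kind, with projections onto hyperplanes other than $(y-x)^\perp$, is needed to complete your proof.
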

\noindent To the best of our knowledge, this result, which may be of independent interest, is not found elsewhere in the literature. The proof gives an explicit formula for $\mathcal{I}_n(\rho)$, recorded in Remark \ref{formula for I}. This formula is unlikely to be optimal, but we show that it is suboptimal by at most a factor of $(n-1)\pi$ as $\rho \to \infty$. In some important special cases, the proof gives a result that is asymptotically optimal, recorded in Corollary \ref{asymptotically optimal cases}. In Proposition \ref{enclosed volume bound}, we establish a lower bound for the volume of the domain enclosed by a convex hypersurface $M^n$ in $\real^{n+1}$ that is, in a sense, complementary to Proposition \ref{intrinsic-extrinsic proposition 1}. 
\smallskip 

Understanding the relationship between the metric properties of a space and its volume is an important problem, with a rich history, in many branches of geometry. In Riemannian geometry, where an especially important goal is to find bounds for volume that don't depend explicitly on curvature bounds, two of the most fundamental and well-known results of this type are Berger's isoembolic inequality \cite{Besse1978,Berger1980} and Gromov's filling radius estimate \cite{Gromov1983}. In his early work on the isoembolic inequality, Berger proved that the area of a Riemannian metric on the $2$-sphere can be bounded from below in terms of its displacement under fixed-point-free involutions.

\begin{theorem}[\cite{Berger1977}, Proposition 2]\label{berger antipodal bound}
Let $g$ be a Riemannian metric on the $2$-sphere with distance function $d_g$ and area $\mathrm{Area}(S^2,g)$. If $\alpha$ is any fixed-point-free involution of the  $2$-sphere\footnote{Berger's result as stated in \cite{Berger1977} applies to involutions that are conjugate to the antipodal map by homeomorphisms, but Brouwer \cite{Brouwer1919} and Ker\'{e}kj\'{a}rt\'{o} \cite{Kerekjarto1919} showed that this is the case for all fixed-point-free involutions of the $2$-sphere.} and $\mu(\alpha) = \min\limits_{x \in S^2} d_{g}(x,\alpha(x))$, then
\[
\mathrm{Area}(S^2,g) \geq \widehat{h}_2 \mu(\alpha)^2,
\]
where $\widehat{h}_2$ is a positive constant that is at least $\frac{1}{2}$. 
\end{theorem}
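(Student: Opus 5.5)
The plan is to pass to the quotient projective plane and reduce the statement to a systolic inequality. By the Brouwer--Ker\'{e}kj\'{a}rt\'{o} result quoted in the footnote, $\alpha$ is conjugate to the antipodal map. Since $\alpha$ is a fixed-point-free involution, the quotient $P = S^2/\alpha$ is homeomorphic to $\real P^2$, and the projection $\pi : S^2 \to P$ is a $2$-sheeted covering. I will first check that $\alpha$ is a $d_g$-isometry. If it is not, $g$ does not descend, and I would first replace $g$ by $g + \alpha^* g$. This at most doubles the area, it does not decrease distances, and the constant is adjusted accordingly. So I assume $g$ descends to a metric $\bar g$ on $P$, with $\mathrm{Area}(S^2,g) = 2\,\mathrm{Area}(P,\bar g)$.

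The key step is a systole bound. Let $\gamma$ be a noncontractible loop in $P$ based at $\pi(z)$. Its lift to $S^2$ starting at $z$ ends at the other preimage $\alpha(z)$, since $\gamma$ represents the nontrivial element of $\pi_1(P) = \mathbb{Z}/2$. Hence
\[
L(\gamma) \geq d_g(z,\alpha(z)) \geq \mu(\alpha),
\]
and therefore $\mathrm{sys}(P,\bar g) \geq \mu(\alpha)$.

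It remains to bound $\mathrm{Area}(P,\bar g)$ below by a multiple of $\mathrm{sys}(P,\bar g)^2$. The cleanest route is Pu's inequality, $\mathrm{Area}(\real P^2) \geq \frac{2}{\pi}\mathrm{sys}^2$, which gives $\mathrm{Area}(S^2,g) \geq \frac{4}{\pi}\mu(\alpha)^2 \geq \frac12 \mu(\alpha)^2$.

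To keep the proof self-contained and elementary, I would instead use a coarea argument. Set $s = \mathrm{sys}(P,\bar g)$, fix $p \in P$, and let $f = d_{\bar g}(p,\cdot)$. Take $r < s/2$ and a point $q$ with $f(q) = r$, and join $q$ to $p$ by a minimizing segment. Any curve in the level set $\{f = r\}$ that encircles $p$, concatenated with radial segments, would produce loops of length less than $s$. So the level set must have total length at least $2r$. The reason is that a short level set would yield a noncontractible loop through $p$ of length less than $2r + (\text{length of the level set}) < s$, or else would force the ball to be a disk whose boundary, together with two minimizing segments, has controlled length. The standard Gromov-type argument makes this precise. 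The coarea formula then gives
\[
\mathrm{Area}(P) \geq \int_0^{s/2} 2r\,dr = \frac{s^2}{4}.
\]
Combining, $\mathrm{Area}(S^2,g) = 2\,\mathrm{Area}(P) \geq \frac{s^2}{2} \geq \frac12\mu(\alpha)^2$.

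The main obstacle is the level-set length estimate, that is, showing $\mathrm{length}\{f = r\} \geq 2r$ for $r < s/2$. Here one must use that $\{f = r\}$ separates $p$ from points at distance greater than $r$. One also needs that a point $q'$ on the level set approximately opposite $q$ yields, via the two minimizing segments to $p$ and the short arc of the level set, a noncontractible loop whenever the level set is too short. If this becomes delicate, I would simply invoke Pu's inequality, or Hersch's inequality on $S^2$ directly.
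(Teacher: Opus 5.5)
The gap is in your reduction to the case where $\alpha$ is an isometry of $g$. The steps after it are correct: a noncontractible loop in $S^2/\alpha$ lifts to a path from $z$ to $\alpha(z)$, so the systole is at least $\mu(\alpha)$, and Pu's inequality finishes. But those steps only treat isometric involutions.

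Replacing $g$ by $g' = g + \alpha^*g$ does make $\alpha$ an isometry, and it does not decrease $\mu(\alpha)$. However, your claim that it at most doubles the area is false. Represent $g$ and $\alpha^*g$ at a point by positive definite matrices $A$ and $B$. The area density of $g'$ is then $\sqrt{\det(A+B)}$, and Minkowski's determinant inequality gives only the \emph{lower} bound $\sqrt{\det(A+B)} \geq \sqrt{\det A} + \sqrt{\det B}$. For $A = \mathrm{diag}(1,\varepsilon^2)$ and $B = \mathrm{diag}(\varepsilon^2,1)$, the right side is $2\varepsilon$ while the left side is $1+\varepsilon^2$.

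This is realized by actual metrics. Choose a disk $U$ disjoint from $\alpha(U)$ and prescribe $g$ independently on $U$ and on $\alpha(U)$; shrinking $g$ conformally elsewhere then makes $\mathrm{Area}(S^2,g')/\mathrm{Area}(S^2,g)$ arbitrarily large. Averaging to $\frac12(g+\alpha^*g)$ has the same defect and can also decrease distances. Moreover, $\alpha$ is only assumed continuous, so $\alpha^*g$ need not even be defined.

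No better symmetrization can repair this, because the non-isometric case is the entire content of the theorem. Your reduction never uses $n=2$ in an essential way. Take a smooth fixed-point-free involution of $(S^n,g)$ that is an isometry. Its quotient is homotopy equivalent to $\real P^n$, hence essential. Your lifting argument, with Gromov's systolic inequality in place of Pu's, then gives $\mathrm{Vol}(S^n,g) \geq c_n \mu(\alpha)^n$.

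Consequently, any procedure that makes $\alpha$ isometric, does not decrease $\mu(\alpha)$, and increases volume by at most a bounded factor would contradict Croke's examples \cite{Croke2008} for $n \geq 3$. Those involutions are smoothly conjugate to the antipodal map, so the contradiction applies to them. In the isometric case Pu's inequality even gives the conjecturally optimal constant $\frac{4}{\pi}$, which is a further sign that this is not where the difficulty lies.

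A correct proof has to work on $(S^2,g)$ itself, where $g$ does not descend to the quotient, and must use something specific to dimension two. The paper does not reprove the statement. It cites Berger \cite{Berger1977} and notes that Croke's proof of \cite[Theorem 1.4]{Croke2002} gives the result for arbitrary continuous self-maps of the $2$-sphere, with the smaller constant $\frac{1}{3456}$.
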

\noindent Berger used Theorem \ref{berger antipodal bound} to establish a curvature-free lower bound for the volume of a Riemannian $3$-manifold in terms of its injectivity radius in \cite[Proposition 1]{Berger1977}. The proof of this result gives a lower bound for the volume of a Riemannian $(n+1)$-manifold whenever the volume of a Riemannian metric on the $n$-sphere can be bounded below by its displacement under fixed-point-free involutions as in Theorem \ref{berger antipodal bound}. Berger later proved the sharp isoembolic inequality in all dimensions by a different argument in \cite{Berger1980}. However, it was not known whether Theorem \ref{berger antipodal bound} is valid in higher dimensions until Croke produced a family of examples showing that this is not the case: for each $n \geq 3$, the results in \cite{Croke2008} give a family of Riemannian metrics $g$ on $S^{n}$ and fixed-point-free involutions $\alpha: S^{n} \to S^{n}$ for which the infimum of the ratio $\frac{\mathrm{Vol}(S^{n},g)}{\mu(\alpha)^n}$ is zero\footnote{These involutions are conjugate to the standard antipodal map by diffeomorphisms, so \cite{Croke2008} precludes an extension of Theorem \ref{berger antipodal bound} to any family of involutions containing the standard antipodal map in dimensions $n \geq 3$. See Remark \ref{Yang discussion} for a discussion of the topological classification of fixed-point-free involutions of spheres.}.
\smallskip 

Theorem \ref{main theorem} implies that Berger's inequality is valid for convex hypersurfaces in all dimensions. Theorem \ref{main theorem} is broader than Theorem \ref{berger antipodal bound} in that Theorem \ref{berger antipodal bound} gives a lower bound in terms of displacement under involutions whereas Theorem \ref{main theorem} gives a lower bound in terms of displacement under all continuous maps, although Croke gave a proof of Theorem \ref{berger antipodal bound} in \cite[Theorem 1.4]{Croke2002} that establishes the result, with a smaller constant of $\frac{1}{3{,}456}$, for all continuous self-maps of the $2$-sphere\footnote{The proof of \cite[Theorem 1.4]{Croke2002} establishes the result for all continuous self-maps of the $2$-sphere, although the result is stated only for involutions.}. 
\smallskip 

Several statements that would strengthen or extend Theorem \ref{main theorem} are precluded by known results.  For example, Theorem \ref{main theorem} is not valid for star-shaped hypersurfaces, because the examples constructed in \cite{Croke2008} are boundaries of star-shaped domains in $\real^{n+1}$. However, there is a possible generalization of Theorem \ref{main theorem} that seems to be a natural goal for future work: convex hypersurfaces have nonnegative curvature as length spaces, and, conversely, work of Sacksteder \cite{Sacksteder1960} implies that closed hypersurfaces in $\real^{n+1}$,  $n \geq 2$, with nonnegative sectional curvature are embedded as convex hypersurfaces. It therefore seems possible that Theorem \ref{main theorem} can be subsumed into a result about spaces with nonnegative curvature. More broadly, it seems natural to ask the following.

\begin{question}\label{nonpositive curvature question}
For any $K \in \real$ and $\delta > 0$, does there exist $\widetilde{h}_n(K,\delta) > 0$ such that if $g$ is any Riemannian metric on $S^n$ with sectional curvature at least $K$ and diameter at most $\delta$, $\alpha$ is any map (or involution) from $S^n$ to itself, and $\mu(\alpha) = \min\limits_{x \in S^n} d_{g}(x,\alpha(x))$, then $\mathrm{Vol}(S^n,g) \geq \widetilde{h}_n(K,\delta) \mu(\alpha)^n$? \end{question}
\noindent An affirmative answer to Question \ref{nonpositive curvature question} would be consistent with all results known to the authors, including the examples in \cite{Croke2008}. The constant $\widetilde{h}(K,\delta)$ in a result of this type would be independent of $\delta$ for $K \geq 0$, so this result would give a uniform constant for metrics with nonnegative curvature and thus supersede Theorem \ref{main theorem}. We discuss another question related to possible extensions of Theorem \ref{main theorem} in Remark \ref{final remark}. 
\smallskip 

The proof of Theorem \ref{main theorem} gives explicit but suboptimal values for the constants $h_n$. For $n = 2$, one can take $h_2 = \frac{\sqrt[3]{\frac{\pi}{6}}}{(1 + \sqrt[6]{\frac{\pi}{6}})^2} \approx 0.2237$. The values established for $h_3 \approx 0.0443$ and $h_4 \approx 0.0080$ can also be expressed in radicals but are significantly more complicated. In general, the value for $h_n$ given by our proof is a root of a polynomial of degree $n$ and does not seem to have an expression in radicals in terms of familiar geometric constants for $n \geq 5$. Relative to the constants given by the antipodal map of the round sphere, these constants decay at a rate roughly proportional to $\frac{1}{\sqrt{n!}}$, as shown in Proposition \ref{optimality_estimate}. A stronger result in Proposition \ref{intrinsic-extrinsic proposition 1} would give an improved estimate for the constants in Theorem \ref{main theorem}, but, even with an optimal result in Proposition \ref{intrinsic-extrinsic proposition 1}, our proof of Theorem \ref{main theorem} would not immediately give the optimal result. At several points in the paper, we discuss ways that our estimate for the constants in Theorem \ref{main theorem} may be improved in conjunction with other conjectures and results. In Appendix \ref{asymptotics_appendix}, we discuss the asymptotic behavior of our estimate for the constants in Theorem \ref{main theorem} in more detail. 
\smallskip 

The optimal constant in Theorem \ref{berger antipodal bound} is also not known but is expected to be $\frac{4}{\pi}$, the constant given by the antipodal map of the round sphere. We will establish the following sharp lower bound for the mean width of a convex hypersurface. If the optimal constant in either Theorem \ref{main theorem} or \ref{berger antipodal bound} is given by the antipodal map of the round sphere, the case $n=2$ of this result would be an immediate corollary by Minkowski's inequality \cite{Minkowski1903,Schneider1993}. 


\begin{theorem}\label{mean width}
Let $M^n$ be a closed, convex hypersurface in $\real^{n+1}$, $\alpha$ a continuous map from $M$ to itself, $\mu(\alpha)$ its minimum intrinsic displacement, as in Theorem \ref{main theorem}, and $\Xi(M)$ the mean width of $M$, as in Definition \ref{width definitions}. Then,
\begin{equation}
\label{mean width eqn}
\displaystyle \Xi(M) \geq \frac{2}{\pi} \mu(\alpha). 
\end{equation}
For $n \geq 2$, equality holds if and only if $M$ is a round sphere and $\alpha$ is the antipodal map.
\end{theorem}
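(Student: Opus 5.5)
Since $\alpha$ is an arbitrary continuous map, the whole argument has to go through the displacement of one well-chosen point. The approach is to bound the intrinsic distance between two points of $M$ by the mean width, and then find a point $x$ whose image $\alpha(x)$ is controlled in that bound.

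\textbf{Step 1: an intrinsic distance bound.} Let $w(u)$ be the width of $M$ in direction $u\in S^n$, so that $\Xi(M)$ is the average of $w$ over $S^n$. For $x,y\in M$, I would show
\[
d_M(x,y) \le \frac{\pi}{2}\,\Xi(M).
\]
For $n=1$ this follows from Cauchy's formula: the perimeter $L$ equals $\pi\Xi$, and two points split the curve into two arcs, one of length at most $L/2=\frac{\pi}{2}\Xi$. For general $n$, I would take the $2$-plane $P$ through $x$, $y$ and an interior point, and cut $M$ by it. The intersection $M\cap P$ is a closed convex curve containing $x$ and $y$, and lengths along it dominate $d_M$. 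Hence
\[
d_M(x,y)\le \tfrac12\,\mathrm{length}(M\cap P) = \tfrac{\pi}{2}\,\Xi_P(M\cap P),
\]
where $\Xi_P$ is the planar mean width. Since the projection onto $P$ contains the section, $\Xi_P(M\cap P)\le \Xi_P(\pi_P M)$. The key remaining point is to choose $P$ so that the planar mean width of the projection, which averages $w$ over the great circle $S^n\cap P$, is at most $\Xi(M)$. Averaging over all $2$-planes gives exactly $\Xi(M)$, so some plane satisfies this. The difficulty is that such a plane must also pass through $x$ and $y$.

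\textbf{Step 2: the main obstacle, and how to handle it.} The constraint that $P$ contain $x$ and $y$ is the crux. My first attempt would use a point of large displacement rather than an arbitrary pair. Pick $x$ with $d_M(x,\alpha(x))\ge\mu(\alpha)$; this holds for every $x$. It therefore suffices to show that some pair $(x,y)$ satisfies $d_M(x,y)\le\frac{\pi}{2}\Xi$, with $y=\alpha(x)$.

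To produce such a pair, I would average over great circles. For each $2$-plane direction $V$, consider a plane section of $M$ parallel to $V$ whose half-perimeter bounds the distance between any two of its points. I would then use a degree argument, or Borsuk--Ulam applied to the map $x\mapsto\alpha(x)$ composed with the Gauss map, to find $x$ with $\alpha(x)$ in the same section as $x$ for a plane direction $V$ whose circle average of $w$ is at most $\Xi$. Continuity of $\alpha$ is what makes such a topological selection possible.

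Alternatively, there is a cleaner route if it holds. Show $\min_x d_M(x,\alpha(x))\le \min_x d_M(x,\bar x)$, where $\bar x$ is the "antipodal" point with opposite normal. Then bound the antipodal distance $d_M(x,\bar x)$, averaged over $x$ with respect to the Gauss map measure, by $\frac{\pi}{2}$ times the average width, using $d_M(x,\bar x)\le\frac{\pi}{2}w(\nu(x))$ via the section through the normal direction.

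\textbf{Step 3: equality.} Equality forces equality in every inequality above. The sections must satisfy $\Xi_P(M\cap P)=\Xi_P(\pi_PM)$, which makes all the relevant planar sections centrally symmetric curves of constant width, hence circles. Then $w$ is constant, so $M$ is a sphere, and the displacement of $\alpha$ must be the full half great circle at every point, so $\alpha$ is the antipodal map.
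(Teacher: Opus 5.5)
Your inequality argument, in its first route, is the paper's route. For a $2$-plane direction $V$, find $x\in M$ with $\alpha(x)-x\in V$. Then $\mu(\alpha)\le d_M(x,\alpha(x))\le\frac12 L\big((x+V)\cap M\big)\le\frac12 L\big(\partial\phi_V(M)\big)$, and by Crofton's formula the last term is $\frac{\pi}{2}$ times the average of $w$ over the great circle $V\cap S^n$. Your Step 1 target cannot be proved as stated: $d_M(x,y)\le\frac{\pi}{2}\Xi(M)$ fails for a thin cylinder of length $L$ in $\real^3$, whose intrinsic diameter is about $L$ while $\Xi\approx L/2$. So the pair really must come from $\alpha$.

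The step that makes this work is the one you leave vague. Borsuk--Ulam is unavailable because $\alpha$ need not be an involution, and $\alpha$ composed with the Gauss map is not the relevant map. What you need is that the chordal map $\Psi_\alpha(x)=\frac{\alpha(x)-x}{|\alpha(x)-x|}$ from $M$ to $S^n$ is surjective. The paper proves it has degree $(-1)^{n+1}$ (Lemma \ref{chordal gauss map}), using the Hirsch--Milnor fact that fixed-point-free self-maps of $S^n$ are homotopic through fixed-point-free maps. An elementary alternative is the homotopy $x\mapsto\frac{(1-t)\alpha(x)+tp-x}{|(1-t)\alpha(x)+tp-x|}$ for an interior point $p$. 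It never degenerates, because $(1-t)\alpha(x)+tp$ is interior for $t>0$, and it ends at $x\mapsto\frac{p-x}{|p-x|}$, which has degree $(-1)^{n+1}$. Surjectivity gives the bound for every $V$, not just for one below-average $V$, and the equality case needs it for every $V$. Drop the alternative route: $d_M(x,\bar x)\le\frac{\pi}{2}w(\nu(x))$ fails at the centre of a face of a thin pancake, where $w(\nu(x))$ is the thickness but $d_M(x,\bar x)$ is at least the radius.

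The equality case has a genuine gap. Equality of the perimeters of $M\cap P$ and $\phi_P(M)$ only says the section coincides with the projection boundary. It gives neither central symmetry nor constant width of the section, so ``hence circles'' is unsupported. Also, ``$w$ is constant, so $M$ is a sphere'' is false: Meissner bodies, or the solid of revolution of a Reuleaux triangle, have constant width.

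In the paper, equality forces $\mu(\alpha)$ to equal $\frac{\pi}{2}$ times the great-circle average of $w$ for every $V$. Injectivity of the spherical X-ray (Funk) transform on even functions then gives constant width (Lemma \ref{constant width}). What your proposal lacks entirely is a rigidity argument upgrading constant width to roundness. The paper gives two:
\begin{enumerate}
\item Equality makes every $2$-plane through $x$ and $\alpha(x)$ cut $M$ in two minimizing geodesics of length $\mu(\alpha)$. So $\alpha$ is an intrinsic isometric involution (Lemma \ref{isometric involution}). By Borisenko's theorem it extends to a rigid motion, necessarily a point reflection since $\alpha$ has no fixed points on $M$. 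A centrally symmetric body of constant width is a ball (Lemma \ref{centrally symmetric and constant width}).
\item Apply the filling area inequality to $3$-dimensional sections (Lemma \ref{filling lemma}), and compare with Minkowski's inequality for the constant-width $3$-dimensional projections.
\end{enumerate}
Only once $M$ is known to be round does your last sentence correctly identify $\alpha$ as the antipodal map.
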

\noindent In fact, we give two proofs of Theorem \ref{mean width}. Theorem \ref{mean width} implies a sharp lower bound for the mean curvature of a smooth, closed, convex hypersurface, in Corollary \ref{mean curvature}.  For the $n = 1$ case of Theorem \ref{mean width}, see Remark \ref{crofton remark}. 
\smallskip 

We conclude this introduction by briefly reviewing some results related to Theorems \ref{main theorem}, \ref{berger antipodal bound}, and \ref{mean width}, giving an outline of the rest of the paper, and setting some of the notation. Although Theorem \ref{berger antipodal bound} does not extend to dimensions $n \geq 3$, Croke proved in \cite{Croke1987} that, in a precise sense, the antipodal map of the constant-curvature Riemannian metric on $S^n$ is infinitesimally optimal for the ratio $\frac{\mathrm{Vol}(S^n,g)}{d(x,\alpha(x))^n}$ in all dimensions among smooth involutions $\alpha$ that are conjugate to the antipodal map by diffeomorphisms\footnote{In dimensions $n \geq 4$, however, there are smooth, fixed-point-free involutions of the standard smooth structure on the $n$-sphere that are not of this form, cf. Remark \ref{Yang discussion}.}. In \cite{Croke2002}, Croke proved that, for any Riemannian metric $g$ on $S^3$,
\[
\mathrm{Vol}(S^3,g) \geq \widehat{h}_3 \big( \min\{ \Lambda(g), \mu(\alpha) \} \big)^3,
\]
where $\widehat{h}_3 > 0$ is a universal constant, $\Lambda(g)$ is the length of the shortest closed geodesic in $(S^3,g)$, and $\mu(\alpha)$ is the minimum displacement of an arbitrary fixed-point free involution $\alpha : S^3 \to S^3$. This result also partially addresses another long-standing question in Riemannian geometry: whether the volume of a Riemannian metric on the $n$-sphere, or any closed manifold, can be bounded from below in terms of the length of its shortest closed geodesic. This is known to be true for the $2$-sphere by work of Croke \cite{Croke1988}, open in general for the sphere of dimension $n \geq 3$, and true for convex hypersurfaces in all dimensions by work of Treibergs \cite{Treibergs1985} and Croke \cite{Croke1988}; however, the sharp inequality between area and the length of the shortest closed geodesic is not known even for convex surfaces $M^2$ in $\real^3$. In connection with Theorem \ref{mean width}, Paiva \cite{Paiva97} has proved a sharp lower bound for the mean width of a convex surface $M^2$ in $\real^{3}$ in terms of the length of its shortest closed geodesic. 
\smallskip 

\noindent \textbf{Organization of the paper.} Section \ref{int greater than ext} contains the proof of Proposition \ref{intrinsic-extrinsic proposition 1}. Section \ref{int comp to ext} contains the proofs of Proposition \ref{enclosed volume bound} and Corollary \ref{intrinsic-extrinsic proposition 2}. Section \ref{main results} contains the proofs of Theorems \ref{main theorem} and \ref{mean width}. Theorem \ref{main theorem} is proved as a corollary of Proposition \ref{area bound proposition}, which gives a more detailed description of the relationship between the metric properties of a convex hypersurface and its area. Appendix \ref{asymptotics_appendix} describes the asymptotics of our estimate for the constants in Theorem \ref{main theorem}, and Appendix \ref{int_ext_appendix} contains a family of results about an invariant for fixed-point-free maps appearing in the proof of Theorem \ref{main theorem}. The results in Sections \ref{intro}, \ref{int greater than ext}, \ref{int comp to ext}, and \ref{main results} do not depend on the appendices.
\smallskip 

\noindent \textbf{Notation.} We denote by $\mathrm{Vol}(\Omega)$ the $(n+1)$-dimensional Hausdorff measure of a domain $\Omega^{n+1}$ in $\real^{n+1}$ and by $\mathrm{Area}(M)$ the $n$-dimensional Hausdorff measure of a hypersurface $M^n$ in $\real^{n+1}$. For $x,y \in M$, $d_M(x,y)$ denotes the intrinsic distance between $x$ and $y$, i.e., the minimal length of a path in $M$ connecting them. All maps are assumed to be continuous. If $\alpha$ is any map from a length space $X$ to itself, then $\mu(\alpha) = \displaystyle \min_{x \in X} d_X (x,\alpha(x))$ is the minimal intrinsic displacement  of $\alpha$. The usual norm on $\real^{n+1}$ is denoted by $|\cdot|$, so that $|y - x|$, the distance between $x$ and $y$, is the length of the line segment $\overline{xy}$. The line in $\real^{n+1}$ connecting $x$ and $y$ is denoted by $\ell_{xy}$, and, if an affine subspace containing $\ell_{xy}$ is understood, the orthogonal complement to $\ell_{xy}$ at $x$ is denoted by $\ell_{xy}^\perp$. The length of a curve $\kappa$ is denoted by $L(\kappa)$. Orthogonal projection onto an affine subspace $X$ of $\real^{n+1}$ is denoted by $\phi_X$. The volume of the $n$-dimensional unit ball in $\real^n$ and the area of the $n$-dimensional unit sphere in $\real^{n+1}$ are denoted by $\omega_n$ and $\sigma_n$, respectively. If $M^n$ is a convex hypersurface in $\real^{n+1}$ and $\alpha$ is a map from $M$ into itself, then $\rho(\alpha) = {\displaystyle \max_{x \in M}} \frac{d_M(x,\alpha(x))}{|\alpha(x) - x|}$ is the maximum ratio of the intrinsic to extrinsic displacements of $\alpha$. The width of $M$ along a line $\ell$ is $w_\ell(M)$, the minimum such width over the Grassmannian $Gr(1,n+1)$ is $\underline{w}(M)$, and the mean width of $M$ is $\Xi(M)$.
\smallskip

\noindent \textbf{Acknowledgements.} We are happy to thank Chris Croke and Rob Kusner for helpful conversations about this work and Matteo Raffaelli for suggesting improvements to the manuscript.


\section{When intrinsic distance is large relative to extrinsic distance}
\label{int greater than ext}


The aim of this section is to prove Proposition \ref{intrinsic-extrinsic proposition 1}. The basic idea of the proof is as follows: if the intrinsic distance between points $x,y$ in a closed, convex hypersurface $M^n$ in $\real^{n+1}$ is much greater than their extrinsic distance, any $2$-plane containing $x$ and $y$ must intersect $M$ in a convex curve that is long relative to $|y - x|$. The existence of this family of large plane sections implies there is a hyperplane onto which $M$ has a large orthogonal projection, which gives a lower bound for the area of $M$. It may be helpful to think of a surface resembling a flat pancake, with points whose intrinsic distance is large relative to their extrinsic distance, and which has large area, although it may enclose an arbitrarily small volume. 

\begin{definition}\label{width definitions}
Let $M^n$ be a closed, convex hypersurface in $\real^{n+1}$ and $\ell$ a line in $\real^{n+1}$. The \textbf{width $w_\ell(M)$ of $M$ along $\ell$} is the length of the orthogonal projection of $M$ onto $\ell$ or, equivalently, the distance between the two distinct supporting hyperplanes of $M$ that are orthogonal to $\ell$. The \textbf{minimum width $\underline{w}(M)$ of $M$} is the minimum of $w_\ell(M)$ as $\ell$ varies over $Gr(1,n+1)$, the Grassmannian of unoriented lines through the origin in $\real^{n+1}$. The \textbf{mean width} of $M$ is
\[
\Xi(M) = \int_{Gr(1,n+1)} w_\ell (M) \, d\ell,
\]
where $d \ell$ is the $SO(n+1)$-invariant measure on $Gr(1,n+1)$ with total measure $1$.
\end{definition}

\begin{theorem} [Crofton's formula \cite{Crofton1868,Santalo2004}]\label{crofton's formula}
Let $\kappa$ be a closed, convex plane curve and $L(\kappa)$ its length. Then, $L(\kappa) = \pi \, \Xi(\kappa)$.
\end{theorem}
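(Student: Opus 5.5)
Crofton's formula (Theorem \ref{crofton's formula}) is the classical statement that the perimeter of a closed convex plane curve is $\pi$ times its mean width, where the mean width is taken over lines in the plane, i.e.\ over $Gr(1,2)$. I will prove it from a Cauchy-type integral formula. Let $K$ be the compact convex set bounded by $\kappa$, and parametrize unoriented directions by $\theta \in [0,\pi)$ with unit vector $u_\theta = (\cos\theta,\sin\theta)$. Then $w_\theta(\kappa) = \max_{p\in\kappa}\langle p,u_\theta\rangle - \min_{p\in\kappa}\langle p,u_\theta\rangle$, and the $SO(2)$-invariant probability measure is $d\theta/\pi$. So the claim is equivalent to
\[
L(\kappa) = \int_0^\pi w_\theta(\kappa)\,d\theta .
\]

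\textbf{Step 1: polygons.} First I treat a convex polygon $P$ with edges $e_1,\dots,e_m$ of lengths $L_i$ and unit directions $v_i$. For each $\theta$, the projection of $\partial P$ onto the line spanned by $u_\theta$ covers the segment of length $w_\theta(P)$ exactly twice, apart from finitely many points. This is because $\partial P$ splits into two arcs between the extreme points in direction $u_\theta$, and each arc projects monotonically and bijectively onto that segment. Hence
\[
2\,w_\theta(P) = \sum_i L_i\,|\langle v_i,u_\theta\rangle| .
\]
Integrating over $\theta\in[0,\pi)$ and using $\int_0^\pi|\cos(\theta-\phi)|\,d\theta = 2$ for every $\phi$ gives $2\int_0^\pi w_\theta\,d\theta = 2\sum_i L_i$, which is the formula for $P$.

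\textbf{Step 2: approximation.} Next I take convex polygons $P_k$ inscribed in $\kappa$ whose vertex sets become dense, so that $P_k\to K$ in the Hausdorff metric. Two facts are needed.
\begin{itemize}
\item The widths converge uniformly in $\theta$, since the support function is $1$-Lipschitz with respect to Hausdorff distance. The integrals of $w_\theta$ therefore converge.
\item The perimeters converge, $L(\partial P_k)\to L(\kappa)$. This holds because the length of a rectifiable curve is the supremum of the lengths of inscribed polygons, and for convex curves refining the vertex set increases the perimeter monotonically. Alternatively, perimeter is monotone under inclusion of convex sets and continuous in the Hausdorff metric.
\end{itemize}
Passing to the limit gives the formula for $\kappa$.

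The only delicate point is the perimeter convergence in Step 2, i.e.\ identifying $L(\kappa)$ with the limit of the inscribed perimeters. For a convex curve this is standard: the curve is rectifiable, so its length is the supremum of inscribed polygonal lengths. Degenerate cases are harmless. If $\kappa$ is a segment of length $a$ traversed twice, the same computation gives $L = 2a = \int_0^\pi a|\cos\theta|\,d\theta$. Dividing by $\pi$ yields $L(\kappa) = \pi\,\Xi(\kappa)$.
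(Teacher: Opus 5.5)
Your proof is correct. The paper does not prove this statement; it quotes it as classical, citing Crofton and Santal\'o. So the comparison is with the standard argument behind those citations.

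Your argument is the Cauchy-type proof: the projection identity $2w_\theta(P)=\sum_i L_i|\langle v_i,u_\theta\rangle|$ for polygons, the integral $\int_0^\pi|\cos(\theta-\phi)|\,d\theta=2$, and approximation by inscribed polygons. You also read $\Xi(\kappa)$ correctly as the average over $Gr(1,2)$ of the plane containing $\kappa$. This is how the paper uses the formula, for instance as $L=\frac12\int_{S^1}w$ in Lemma~\ref{constant width} and over $Gr(1,\Pi_{uv})$ in the proof of Theorem~\ref{mean width}.

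The cited integral-geometric route computes the measure of lines meeting $\kappa$ in two ways. First, $\int n(\ell\cap\kappa)\,d\ell=2L(\kappa)$; this is proved for a segment by the same cosine integral and extended by additivity and approximation. Second, the $(p,\theta)$-parametrization of lines gives that measure as $\int_0^\pi w_\theta\,d\theta$. Since almost every line meeting a convex curve meets it twice, the formula follows. That proof counts intersections with sections, while yours measures projections; it is the same computation in dual form.

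The section-counting version is the one that generalizes to the Cauchy--Kubota formulas the paper relies on for Proposition~\ref{crofton corollary}. Yours is more elementary and fully self-contained in the plane.

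One small caution about Step~2. Use only your inscribed-polygon argument: perimeters increase under refinement by the triangle inequality, and the limit equals $L(\kappa)$ because the vertices become dense. Drop the ``alternatively'' clause. Monotonicity of perimeter under inclusion is, in the paper's logical order, a consequence of Crofton-type formulas (Proposition~\ref{crofton corollary}), so invoking it here would be circular.
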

\begin{remark}\label{crofton remark}
By Theorem \ref{crofton's formula}, Theorem \ref{mean width} gives a sharp version of Theorem \ref{main theorem} for convex curves. The equality case for curves in Theorem \ref{mean width} is larger than for convex hypersurfaces of dimension $n \geq 2$ because all convex curves admit a unique mapping $\alpha$, sending each point $x$ to the unique point $\alpha(x)$ at maximum intrinsic distance from $x$, for which equality holds in (\ref{mean width eqn}). 
\end{remark}
\noindent A much more general version of Crofton's formula, discussed in \cite{Santalo2004,Schneider1993} and many other texts on convex and integral geometry, has the following implication.

\begin{proposition}\label{crofton corollary}
If $M_1$ and $M_2$ are closed, convex hypersurfaces in $\real^{n+1}$ and $M_1$ lies in the domain enclosed by $M_2$, then $\mathrm{Area}(M_1) \leq \mathrm{Area}(M_2)$, with equality if and only if $M_1 = M_2$.	
\end{proposition}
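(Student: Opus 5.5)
The natural tool is the general Crofton (Cauchy) formula for hypersurface area. For a closed, convex hypersurface $M$ in $\real^{n+1}$, Cauchy's formula gives
\[
\mathrm{Area}(M) = \frac{1}{\omega_n}\int_{S^n} \mathrm{Vol}_n\big(\phi_{u^\perp}(M)\big)\, du ,
\]
up to a normalizing constant $c_n$ depending only on $n$. Here $\phi_{u^\perp}(M)$ is the orthogonal projection of the convex body bounded by $M$ onto the hyperplane $u^\perp$. The formula holds because almost every line parallel to $u$ that meets the body crosses $M$ in exactly two points. Equivalently, $\mathrm{Area}(M)$ is a constant multiple of the measure of the set of affine lines meeting $M$, each counted with multiplicity $\#(\ell\cap M)/2$.

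\textbf{Step 1 (monotonicity).} Let $K_1\subseteq K_2$ be the convex bodies bounded by $M_1$ and $M_2$. Then $\phi_{u^\perp}(K_1)\subseteq \phi_{u^\perp}(K_2)$ for every direction $u$. Integrating the projection volumes over $u$ gives $\mathrm{Area}(M_1)\le \mathrm{Area}(M_2)$. The only care needed is the degenerate case where a convex hypersurface bounds a body with empty interior (a flat, doubly covered region). We will either treat that convention separately or note that the formula still holds with projections of the flat set counted twice.

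\textbf{Step 2 (strictness).} Suppose $M_1\neq M_2$. Since $M_1\subseteq K_2$ and the two hypersurfaces differ, there is a point $p\in M_2\setminus K_1$. By compactness, a small ball $B$ around $p$ is disjoint from $K_1$. Let $H$ be a hyperplane strictly separating $K_1$ from $p$, and let $v$ be its unit normal. For directions $u$ in an open neighborhood of $v^\perp$, we show that $\phi_{u^\perp}(K_2)$ contains a set of positive $n$-volume lying outside $\phi_{u^\perp}(K_1)$. Concretely, lines through points of $M_2$ near $p$, in directions close to tangent to $H$, miss $K_1$ but meet $K_2$.

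A cleaner route is to use the line-measure form of the formula. The set of lines meeting $K_2$ but not $K_1$ contains every line through $B\cap K_2$ that is nearly parallel to $H$. Since $B\cap K_2$ contains an open subset of $K_2$ (or at least a piece of $M_2$), this set of lines has positive measure. So $\mathrm{Vol}_n(\phi_{u^\perp}(K_2))-\mathrm{Vol}_n(\phi_{u^\perp}(K_1))>0$ on a set of directions $u$ of positive measure, which forces strict inequality. The converse direction, that $M_1=M_2$ gives equality, is trivial.

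\textbf{Main obstacle.} The main difficulty is Step 2 when $K_2$ has empty interior, i.e. $M_2$ is a doubly covered flat $n$-dimensional convex set. Then "lines through $B\cap K_2$" must be read within the hyperplane containing $K_2$, with projections counted with multiplicity two. There I would first try to argue inside that hyperplane: $K_1$ is then a proper compact convex subset of $K_2$, and the strict monotonicity of $n$-volume under a proper containment of convex bodies gives the required positive measure directly.
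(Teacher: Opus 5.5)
Your proof is correct and follows the same route as the paper. The paper simply deduces this from the general Cauchy--Crofton formula (area as an average of projection volumes, equivalently a measure of the lines meeting the hypersurface). Your Step 1 gives the monotonicity, and your Step 2 supplies the strictness argument the paper leaves to the cited references: a point $p\in M_2\setminus K_1$ together with an open set of directions near $v^\perp$.
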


\begin{lemma}\label{max width bound}
Let $\kappa$ be a closed, convex plane curve. Then, for all distinct $x,y \in \kappa$,
\[
\max \{ w_{\ell_{xy}} (\kappa), w_{\ell_{xy}^\perp} (\kappa) \} \geq \frac{1}{2} d_\kappa (x,y).
\]
\end{lemma}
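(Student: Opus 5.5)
The plan is to compare $\kappa$ with the bounding rectangle whose sides are parallel to $\ell_{xy}$ and $\ell_{xy}^\perp$, and to use monotonicity of perimeter under inclusion. Write $a = w_{\ell_{xy}}(\kappa)$ and $b = w_{\ell_{xy}^\perp}(\kappa)$. By the definition of width, $\kappa$ lies between the two supporting lines of $\kappa$ orthogonal to $\ell_{xy}$, which are a distance $a$ apart. It also lies between the two supporting lines orthogonal to $\ell_{xy}^\perp$, which are a distance $b$ apart. Hence $\kappa$ is contained in the closed rectangle $R$ bounded by these four lines, with side lengths $a$ and $b$, and the convex region enclosed by $\kappa$ lies in $R$ as well.

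The key step is to bound the length $L(\kappa)$ by the perimeter $2(a+b)$ of $R$. I would do this with Theorem \ref{crofton's formula}, which gives $L(\kappa) = \pi\,\Xi(\kappa)$ and $L(\partial R) = \pi\,\Xi(\partial R)$. Since the convex hull of $\kappa$ lies in $R$, its width along every line $\ell$ satisfies $w_\ell(\kappa) \leq w_\ell(\partial R)$. Integrating over $Gr(1,2)$ then gives $\Xi(\kappa) \leq \Xi(\partial R)$, and therefore
\[
L(\kappa) \leq 2(a+b).
\]
Alternatively, this is the planar case of Proposition \ref{crofton corollary}. One degenerate possibility deserves a check: if $\kappa$ is a doubly traversed segment, its length is twice the segment's length and the same inequality still holds.

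To finish, $x$ and $y$ split $\kappa$ into two arcs whose lengths sum to $L(\kappa)$. The intrinsic distance $d_\kappa(x,y)$ is at most the length of the shorter arc, so
\[
d_\kappa(x,y) \leq \tfrac12 L(\kappa) \leq a + b \leq 2\max\{a,b\},
\]
which is the claimed inequality. I do not expect any step to be a serious obstacle. The only point needing care is justifying the perimeter monotonicity for a possibly non-smooth or degenerate convex curve, and Crofton's formula handles this uniformly, since widths are monotone under inclusion of convex sets.
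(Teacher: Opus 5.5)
Your proof is correct and is essentially the paper's argument: the paper also places $\kappa$ inside the rectangle bounded by the four support lines. It then uses Proposition \ref{crofton corollary} to obtain $2d_\kappa(x,y) \leq L(\kappa) \leq 2[w_{\ell_{xy}}(\kappa) + w_{\ell_{xy}^\perp}(\kappa)] \leq 4\max\{w_{\ell_{xy}}(\kappa), w_{\ell_{xy}^\perp}(\kappa)\}$. Your derivation of the perimeter bound from Crofton's formula and the monotonicity of widths is just the planar case of that proposition.
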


\begin{proof} Because $\kappa$ is contained in the compact region bounded by the four support lines that define $w_{\ell_{xy}} (\kappa)$ and $w_{\ell_{xy}^\perp} (\kappa)$, Proposition \ref{crofton corollary} implies that
\[
2d_\kappa (x,y) \leq L(\kappa) \leq 2[w_{\ell_{xy}} (\kappa) + w_{\ell_{xy}^\perp} (\kappa)] \leq 4 \max \{ w_{\ell_{xy}} (\kappa), w_{\ell_{xy}^\perp} (\kappa) \}.
\]
\end{proof}

\begin{lemma}\label{width lower bound for curves}
Let $\kappa$ be a closed, convex plane curve. Suppose distinct $x,y \in \kappa$ satisfy $d_\kappa (x,y) \geq \rho|y - x|$ for some $\rho > 1$. If the line $\ell_{xy}^\perp$ is a support line for $\kappa$, then
\[
w_{\ell_{xy}^\perp} (\kappa) \geq \Big( \frac{\rho - 1}{\rho} \Big) d_\kappa (x,y).
\]
\end{lemma}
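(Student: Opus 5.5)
We place coordinates so that $x$ is the origin and $y=(d,0)$ with $d=|y-x|$. The hypothesis that $\ell_{xy}^\perp$ (the line $u=0$ through $x$) is a support line puts $\kappa$ in the half-plane $\{u\ge 0\}$. Then $w_{\ell_{xy}^\perp}(\kappa)=W$ is the extent of $\kappa$ in the $v$-direction.

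Because $\kappa$ is convex and not a segment through $x,y$, it meets $\ell_{xy}$ only at $x$ and $y$. So $\kappa$ splits into two arcs $\gamma_1,\gamma_2$ from $x$ to $y$, lying in $\{v\ge 0\}$ and $\{v\le 0\}$ respectively. Let $h_1=\max_{\gamma_1} v$ and $h_2=\max_{\gamma_2}(-v)$. Then
\[
W = h_1 + h_2 \geq 2\min\{h_1,h_2\}.
\]
Since $d_\kappa(x,y)=\min\{L(\gamma_1),L(\gamma_2)\}$, it suffices to show
\[
\min\{L(\gamma_1),L(\gamma_2)\}\le d+2\min\{h_1,h_2\}\le d+W .
\]
Granting this, $d_\kappa(x,y)\le d+W$. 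Combined with $d\le d_\kappa(x,y)/\rho$, it gives $W\ge \big(1-\tfrac1\rho\big)d_\kappa(x,y)$, which is the claim.

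For each arc, the plan is to close it with the chord $\overline{xy}$. This gives a closed convex curve bounding a convex region $R_i$. If $R_i$ lies in the rectangle $[0,d]\times[0,\pm h_i]$, then Proposition \ref{crofton corollary} gives $L(\gamma_i)+d\le 2d+2h_i$, so $L(\gamma_i)\le d+2h_i$. Containment in the rectangle needs two things. First, $u\ge 0$ on $\gamma_i$, which holds by the support-line hypothesis. Second, $u\le d$ on $\gamma_i$. The second fails exactly when the arc passes through points with $u>d$, i.e.\ "beyond" $y$.

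The main step, and the expected obstacle, is therefore a convexity argument. I would show that we may assume the arc $\gamma_j$ with the smaller height $h_j$ satisfies $u\le d$; otherwise we find another arc that still yields $d_\kappa(x,y)\le d+W$. The plan is to use that $u$ is unimodal along $\kappa$: it increases from its minimum at $x$ to its maximum at a point $p$, then decreases. Hence at most one arc, say $\gamma_2$, contains $p$, and the other arc $\gamma_1$ has $u$ monotone from $0$ to $d$.

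If $p\in\gamma_2$, then $\gamma_1$ reaches $y$ while $u$ is still increasing. By convexity at $y$, the tangent direction of $\kappa$ turns monotonically. So $\gamma_2$ must leave $y$ moving into $\{u\ge d\}$ and come back to the vertical support line at $x$.

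I would try to show from this that the portion of $\gamma_2$ with $u\le d$, together with the vertical segment below $x$, forces $h_2\ge h_1$. The model case is the polygon with vertices $(0,0),(0,h),(d,h),(d,0),\dots$: the would-be counterexample fails convexity at $y$. Then $\min\{h_1,h_2\}=h_1$, and the rectangle bound applied to $\gamma_1$ finishes the argument.

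If this comparison of heights proves awkward, a fallback is to compare $\gamma_2$ with its reflection across the line $u=d$. Alternatively, one can apply Proposition \ref{crofton corollary} to the convex hull of $\gamma_1$ together with its mirror image, to extract the bound $L(\gamma_1)\le d+h_1+h_2$ directly.
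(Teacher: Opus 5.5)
There is a genuine gap. Your reduction rests on the inequality $\min\{L(\gamma_1),L(\gamma_2)\}\le d+2\min\{h_1,h_2\}$, and that inequality is false in general. So are both claims you offer for proving it. Take $\kappa$ to be the boundary of the triangle with vertices $(0,-1)$, $(\tfrac32,-1)$, $(0,2)$. Let $x=(0,0)$, and let $y=(1,0)$ be the point where the hypotenuse crosses $\ell_{xy}$. Then $u=0$ is a support line, $d=1$, $h_1=2$, $h_2=1$, and
\[
L(\gamma_1)=2+\sqrt{5}\approx 4.24,\qquad L(\gamma_2)=\tfrac52+\tfrac{\sqrt{5}}{2}\approx 3.62 .
\]
This example breaks each step of your plan:
\begin{itemize}
\item $\min\{L(\gamma_1),L(\gamma_2)\}\approx 3.62>3=d+2\min\{h_1,h_2\}$.
\item The maximum of $u$ is attained at $(\tfrac32,-1)\in\gamma_2$, yet $h_2<h_1$.
\item $L(\gamma_1)>4=d+h_1+h_2$, so the convex-hull fallback fails too.
\end{itemize}
The lemma itself survives, since $d_\kappa(x,y)\approx 3.62\le 4=d+W$, but only through $\gamma_2$.

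Your polygonal model case is misleading. Convexity at $y$ keeps $\gamma_2$ inside $u\le d$ only when $\gamma_1$ reaches $y$ perpendicular to $\ell_{xy}$. When $\gamma_1$ arrives obliquely, $\gamma_2$ can bulge past $u=d$ while having small height. Now replace the vertex $(\tfrac32,-1)$ by $(30,-1)$, so that $y=(20,0)$. Then $L(\gamma_1)\approx 22.1\le 23=d+W<L(\gamma_2)\approx 41.1$, while $\gamma_2$ still has the smaller height and still contains the maximum of $u$. Hence no rule that selects a single arc by height, or by where $u$ is maximized, can work. The real content of the lemma is $\min\{L(\gamma_1),L(\gamma_2)\}\le d+W$, and your proposal gives no valid argument for it beyond the rectangle case.

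The missing idea is to use both arcs at once, weighted by the tilt of $\kappa$ at $y$.
\begin{enumerate}
\item Choose a support line $\ell$ at $y$. Let $\theta\in[0,\frac{\pi}{2})$ be its angle with the perpendicular to $\ell_{xy}$ at $y$. On one side of $\ell_{xy}$, $\ell$ leans toward $x$; call that arc $\gamma_a$ with height $h_a$. On the other side it leans away; call that arc $\gamma_o$ with height $h_o$.
\item Close each arc with $\overline{xy}$. The result is a convex curve inside the trapezoid cut out by $u=0$, $\ell_{xy}$, $\ell$, and the support line parallel to $\ell_{xy}$. Proposition~\ref{crofton corollary} then gives
\[
d_\kappa(x,y)-d\le h_a(1+\sec\theta-\tan\theta),\qquad d_\kappa(x,y)-d\le h_o(1+\sec\theta+\tan\theta).
\]
\item Since $\sec^2\theta-\tan^2\theta=1$, one has $\frac{1}{1+\sec\theta-\tan\theta}+\frac{1}{1+\sec\theta+\tan\theta}=1$. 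Adding the resulting lower bounds for $h_a$ and $h_o$ gives $W=h_a+h_o\ge d_\kappa(x,y)-d\ge\big(1-\frac{1}{\rho}\big)d_\kappa(x,y)$.
\end{enumerate}
This is the paper's proof.

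Your rectangle argument is its case $\theta=0$, which is Corollary~\ref{width lower bound for curves 2}. More generally, your single-arc bound does hold when $h_a\le h_o$, because $\sec\theta-\tan\theta\le 1$ gives $L(\gamma_a)\le d+2h_a$. It breaks down exactly when the outward-leaning arc is the lower one, as in the example above.
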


\begin{proof} Since $d_\kappa (x,y) > |y - x|$, $\ell_{xy}$ meets $\kappa$ only at $x$ and $y$, and therefore the two support lines for $\kappa$ that are parallel to $\ell_{xy}$ must lie on opposite sides of $\ell_{xy}$. In particular, $\ell_{xy}$ is not a support line for $\kappa$. Thus, $\kappa \setminus \{ x,y \}$ consists of two open arcs, one in each half-space bounded by $\ell_{xy}$. 
\smallskip 

If $\ell_y^s$ is a support line for $\kappa$ at $y$, then $\ell_y^s$ must form, on at least one side, an acute angle with $\ell_{yx}^\perp$. Denote by $\theta$ that acute angle, by $\kappa_a$ the convex curve consisting of the segment $\overline{xy}$ and the component of $\kappa$ on the side containing $\theta$, and by $w_a$ the width of $\kappa_a$ in the direction of $\ell_{yx}^\perp$, as in Figure \ref{geogebra figure}.
\begin{figure}\caption{The case when $\ell_{xy}^\perp$ is a support line for $\kappa$ in Lemma \ref{width lower bound for curves}.}
\label{geogebra figure}
\begin{center}
\includegraphics[scale=0.6]{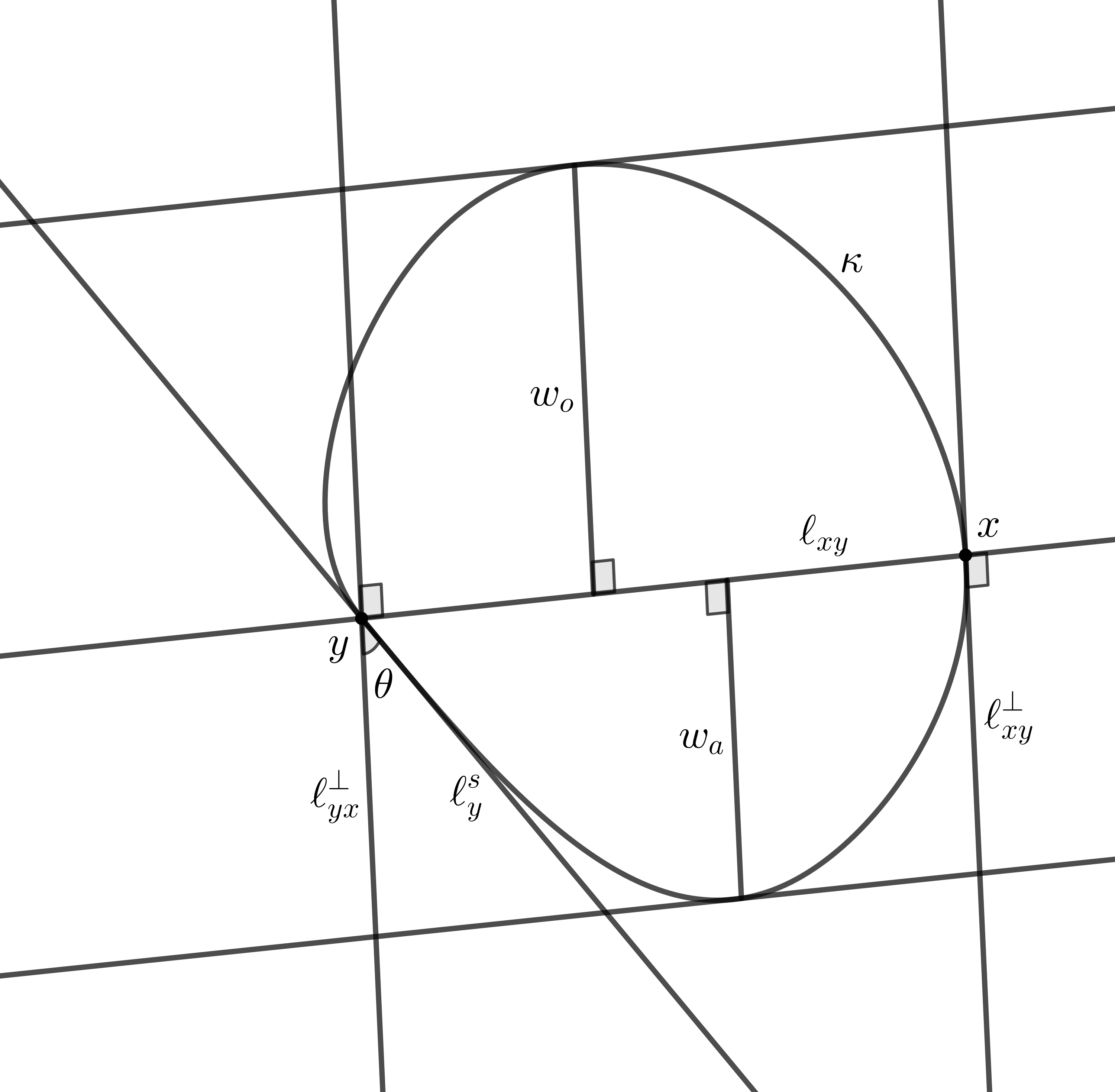}
\end{center}
\end{figure}
Note that $\ell_{xy}$ is a support line for $\kappa_a$. Since $\ell_y^s$ and $\ell_{xy}^\perp$ are also support lines for $\kappa_a$, $\kappa_a$ is contained within the convex region bounded by $\ell_y^s$, $\ell_{xy}^\perp$, $\ell_{xy}$, and the other support line to $\kappa_a$ parallel to $\ell_{xy}$. Thus,
\[
L(\kappa_a) \leq 2|y - x| + w_a + w_a \sec \theta - w_a \tan \theta.
\]
It follows that
\begin{align*}
d_\kappa (x,y) &\leq L(\kappa_a) - |y - x|\\
&\leq w_a(1 + \sec \theta - \tan \theta) + |y - x|\\
&\leq w_a(1 + \sec \theta - \tan \theta) + \frac{d_\kappa(x,y)}{\rho}.
\end{align*}
Solving the above inequality for $w_a$ shows that
\begin{equation}\label{acute inequality}
w_a \geq \frac{\rho - 1}{\rho(1 + \sec \theta - \tan \theta)} d_\kappa (x,y).
\end{equation}
If $\kappa_o$ denotes the convex curve formed from $\overline{xy}$ and the other arc of $\kappa \setminus \{ x, y \}$, and if $w_o$ is its width in the direction of $\ell_{yx}^\perp$, then a similar argument shows that
\begin{equation}\label{obtuse inequality}
w_o \geq \frac{\rho - 1}{\rho(1 + \sec \theta + \tan \theta)} d_\kappa (x,y).
\end{equation}
Because $w_{\ell_{xy}^\perp} (\kappa) = w_a + w_o$ and
\[
\frac{1}{1 + \sec \theta - \tan \theta} + \frac{1}{1 + \sec \theta + \tan \theta} = 1,
\]
the result follows from \eqref{acute inequality} and \eqref{obtuse inequality}. \end{proof}
\noindent When $\ell_{yx}^\perp$ is also a support line for $\kappa$, the proof of Lemma \ref{width lower bound for curves} yields the following stronger result.

\begin{corollary}\label{width lower bound for curves 2}
Let $\kappa$ be a closed, convex plane curve. Suppose distinct $x,y \in \kappa$ satisfy $d_\kappa (x,y) \geq \rho|y - x|$ for some $\rho > 1$. If $\ell_{xy}^\perp$ and $\ell_{yx}^\perp$ are both support lines for $\kappa$, then the two support lines for $\kappa$ parallel to $\ell_{xy}$ are both a distance of at least $\big( \frac{\rho - 1}{2\rho} \big) d_\kappa (x,y)$ from $\ell_{xy}$.
\end{corollary}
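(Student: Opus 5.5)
Under the hypotheses of Corollary \ref{width lower bound for curves 2}, I would rerun the proof of Lemma \ref{width lower bound for curves} on each of the two arcs of $\kappa \setminus \{x,y\}$ separately, now using both perpendicular support lines. As in that proof, $d_\kappa(x,y) > |y-x|$ forces $\ell_{xy}$ to meet $\kappa$ only at $x$ and $y$. Hence $\kappa\setminus\{x,y\}$ splits into two open arcs lying in opposite open half-planes bounded by $\ell_{xy}$. For each arc, let $\kappa_j$ be the closed convex curve formed by that arc together with $\overline{xy}$, and let $w_j$ be the width of $\kappa_j$ in the direction of $\ell_{xy}^\perp$. By construction, $w_j$ is exactly the distance from $\ell_{xy}$ to the support line of $\kappa$ parallel to $\ell_{xy}$ on that side. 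So it suffices to show $w_j \geq \frac{\rho-1}{2\rho} d_\kappa(x,y)$ for each $j$.

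Since $\ell_{xy}^\perp$ and $\ell_{yx}^\perp$ both support $\kappa$, they support each $\kappa_j$ as well. Together with $\ell_{xy}$ and the support line parallel to $\ell_{xy}$ at distance $w_j$, they enclose a rectangle with side lengths $|y-x|$ and $w_j$ containing $\kappa_j$. This is the degenerate case $\theta = 0$ of the configuration in Lemma \ref{width lower bound for curves}, where $\ell_y^s$ coincides with $\ell_{yx}^\perp$. Proposition \ref{crofton corollary} then gives $L(\kappa_j) \leq 2|y-x| + 2w_j$. Because the arc in $\kappa_j$ joins $x$ to $y$ along $\kappa$, its length is at least $d_\kappa(x,y)$, so
\[
d_\kappa(x,y) \leq L(\kappa_j) - |y-x| \leq |y-x| + 2w_j \leq \frac{d_\kappa(x,y)}{\rho} + 2w_j .
\]
Solving for $w_j$ gives the claim, and it matches formula \eqref{acute inequality} with $\sec\theta - \tan\theta = 1$ at $\theta = 0$.

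The only point needing care is the justification that the rectangle contains $\kappa_j$. Each of the four lines supports $\kappa_j$, with $\kappa_j$ on the appropriate side of each. The two perpendicular lines support $\kappa_j$ because $\kappa_j$ lies in the convex hull of $\kappa$, and these lines support $\kappa$ itself. I expect no real obstacle beyond this.
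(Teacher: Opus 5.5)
Your proof is correct and is essentially the paper's argument. The paper takes $\ell_y^s = \ell_{yx}^\perp$, so $\theta = 0$ in the proof of Lemma \ref{width lower bound for curves}, and reads off the bound for both arcs from \eqref{acute inequality} and \eqref{obtuse inequality}; at $\theta = 0$ these reduce exactly to your rectangle estimate $L(\kappa_j) \leq 2|y-x| + 2w_j$, with your version only spelling out the containment and width bookkeeping explicitly.
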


\begin{proof}
Because $\ell_{yx}^\perp$ is a support line of $\kappa$, one can take $\ell_y^s$ to be $\ell_{yx}^\perp$ and, consequently, $\theta$ to be $0$. The claim then follows from \eqref{acute inequality} and \eqref{obtuse inequality}.
\end{proof}

\noindent The next lemma is a special case of a result of Chakerian.

\begin{lemma}[\cite{Chakerian1967}, Theorem 2]\label{chakerian}
Let $\Omega$ be a compact, convex domain in $\real^{n+1}$. Let $H^n$ be a hyperplane in $\real^{n+1}$ and $\phi_H$ orthogonal projection onto $H$. Then, for any line segment $\lambda$ in $\Omega$ orthogonal to $H$,
\[
\mathrm{Vol}(\Omega) \geq \frac{L(\lambda) \mathrm{Area}(\phi_H(\Omega))}{n+1}.
\]
\end{lemma}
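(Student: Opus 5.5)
The plan is to write $\mathrm{Vol}(\Omega)$ as an integral of fiber lengths over the projection $K = \phi_H(\Omega)$, and then compare that integrand with a cone function.

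\textbf{Step 1: concavity of the fiber length.} Choose coordinates in which $H = \{x_{n+1} = 0\}$, so that $K = \phi_H(\Omega)$ is a compact, convex subset of $\real^n$. For each $p \in K$, the fiber $\Omega \cap \phi_H^{-1}(p)$ is a segment, since $\Omega$ is convex. Write it as $\{p\} \times [g_-(p), g_+(p)]$. Convexity of $\Omega$ gives that $g_+$ is concave and $g_-$ is convex on $K$. Hence the fiber length $f = g_+ - g_-$ is a nonnegative concave function on $K$. By Fubini,
\[
\mathrm{Vol}(\Omega) = \int_K f(p)\, dp .
\]

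\textbf{Step 2: the cone lower bound.} Let $p_0 = \phi_H(\lambda)$. Because $\lambda$ is orthogonal to $H$, it projects to this single point, and since $\lambda \subset \Omega$ we have $f(p_0) \geq L(\lambda)$. Any $p \in K$ can be written as $p = p_0 + t(q - p_0)$ with $q \in \partial K$ and $t \in [0,1]$. Concavity of $f$ and $f(q) \geq 0$ then give
\[
f(p) \geq (1-t) f(p_0) + t f(q) \geq (1-t) L(\lambda).
\]
The right-hand side, say $c(p)$, is the height function of the cone in $\real^{n+1}$ with base $K$ and apex at height $L(\lambda)$ above $p_0$.

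\textbf{Step 3: integrating the cone.} The superlevel set $\{c \geq s\}$ is the homothetic copy $p_0 + \big(1 - s/L(\lambda)\big)(K - p_0)$. Its area is therefore $\big(1 - s/L(\lambda)\big)^n \mathrm{Area}(K)$. Integrating over $s$ gives
\[
\int_K c = \int_0^{L(\lambda)} \Big(1 - \frac{s}{L(\lambda)}\Big)^n \mathrm{Area}(K)\, ds = \frac{L(\lambda)\,\mathrm{Area}(K)}{n+1}.
\]
Combining this with Step 2 and the Fubini identity yields the inequality of Lemma~\ref{chakerian}.

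\textbf{Degenerate cases.} None of the steps is a serious obstacle. The only points that need care are the degenerate ones. If $p_0 \in \partial K$, the ray parametrization still covers $K$, up to a null set when $K$ has empty interior. If $K$ has empty interior, then $\mathrm{Area}(K) = 0$ and the inequality is trivial.
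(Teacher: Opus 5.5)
Your argument is correct. The paper does not prove this lemma; it quotes it from Chakerian's Theorem 2. So there is no internal argument to compare against, and what you have written is the standard self-contained proof.

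You express $\mathrm{Vol}(\Omega)$ as the integral over $K=\phi_H(\Omega)$ of the chord length $f=g_+-g_-$. You then use concavity of $f$, which is the same fact that makes Steiner symmetrization preserve convexity, to bound $f$ below by the height function of the cone over $K$ with apex at height $L(\lambda)$ above $p_0$.

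An equivalent formulation goes through Steiner symmetrization. The symmetral of $\Omega$ with respect to $H$ has the same volume. It contains $K$, and it contains the translate of $\lambda$ centered on $H$. Hence it contains a double cone over $K$ of total height $L(\lambda)$, whose volume is $L(\lambda)\,\mathrm{Area}(K)/(n+1)$.

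The citation is more economical. Your proof shows that nothing beyond Fubini and concavity is used, so the inequality applies unchanged in every ambient dimension. This matters here because the paper applies the lemma not in $\mathbb{R}^{n+1}$ but inside $N_x$ and inside $N_x\cap H_x^s$, to projections of $\Omega$, with factors $\frac{1}{n}$ and $\frac{1}{n-1}$.

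One small precision: in Step 2, take $q$ to be the farthest point of $K$ on the ray from $p_0$ through $p$. Then $t$ is the gauge of $K-p_0$, and the superlevel sets in Step 3 are exactly $p_0+\big(1-s/L(\lambda)\big)(K-p_0)$. When $p$ lies in the interior of $K$ this choice of $q$ is forced, so only a null set is affected and your integral is unchanged.
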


\begin{proof}[Proof of Proposition \ref{intrinsic-extrinsic proposition 1}]
Let $M^n$ be a closed, convex hypersurface in $\real^{n+1}$, and suppose $x$ and $y$ are distinct points of $M$ that satisfy $d_M (x,y) \geq \rho |y - x|$ for some $\rho > 1$. Let $N_x$ be the hyperplane through $x$ that is orthogonal to the segment $\overline{xy}$, $H_x^s$ and $H_y^s$ supporting hyperplanes of $M$ at $x$ and $y$, respectively, and $\tilde{H}_x$ the hyperplane parallel to $H_y^s$ through $x$. Since $d_M (x,y) > |y - x|$, $H_x^s$ and $\tilde{H}_x$ both intersect the line $\ell_{xy}$ transversely.
\smallskip 

Let $P_x = N_x \cap H_x^s \cap \tilde{H}_x$. For any $z \neq x$ in $P_x$, denote by $\Pi_z$ the two-dimensional plane in $\real^{n+1}$ containing $\overline{xy}$ and $z$, and let $\kappa_z = M \cap \Pi_z$. Since
\[
d_{\kappa_z} (x,y) \geq d_M (x,y) \geq \rho|y - x|,
\]
it follows from Corollary \ref{width lower bound for curves 2} that both support lines to $\kappa_z$ parallel to $\ell_{xy}$ are at a distance of at least $\big( \frac{\rho - 1}{2\rho} \big) d_M (x,y)$ from $\overline{xy}$. Because this is true for every $z \neq x$, the orthogonal projection of $M$ onto $P_x$ contains a ball of radius $\big( \frac{\rho - 1}{2\rho} \big) d_M (x,y)$ around $x$. The proof splits into three cases, depending on the manner in which $N_x$, $H_x^s$, and $\tilde{H}_x$ intersect:\\

\indent (1) $N_x = H_x^s = \tilde{H}_x$\\
\indent (2) $N_x = H_x^s \neq \tilde{H}_x$ or $N_x = \tilde{H}_x \neq H_x^s$\\
\indent (3) $H_x^s \neq N_x \neq \tilde{H}_x$\\

\noindent \textit{Case (1):} If $N_x = H_x^s = \tilde{H}_x$, then $\dim(P_x) = n$, i.e., $P_x$ is a hyperplane. The orthogonal projection of $M$ onto $P_x$ contains an $n$-dimensional ball of radius $\big( \frac{\rho - 1}{2\rho} \big) d_M (x,y)$, and the preimage of that ball consists of two disjoint open sets, on each of which the projection is a homeomorphism that doesn't increase area. Thus,
\begin{equation}\label{case 1 bound}
\mathrm{Area}(M) > 2 \omega_n \Big[ \frac{\rho - 1}{2\rho} d_M (x,y) \Big]^n = \frac{\omega_n}{2^{n-1}} \Big( \frac{\rho - 1}{\rho} \Big)^n d_M (x,y)^n,
\end{equation}
with a strict inequality since it's not possible both for those two open sets to contain the full measure of $M$ and for the projection to preserve their areas.\\

\noindent \textit{Case (2):} Without loss of generality, by swapping $x$ and $y$ if necessary, one may suppose that $N_x = H_x^s \neq \tilde{H}_x$. Let $\Pi_*$ be the two-dimensional plane in $\real^{n+1}$ containing $\ell_{xy}$ and the line in $N_x$ orthogonal to $\tilde{H}_x$, and let $\kappa_* = M \cap \Pi_*$. Since
\[
d_{\kappa_*}(x,y) \geq d_M(x,y) \geq \rho|y-x|,
\]
it follows from Lemma \ref{width lower bound for curves} that the width of $\kappa_*$ in the direction orthogonal to $\ell_{xy}$ is at least $\big( \frac{\rho - 1}{\rho} \big) d_M(x,y)$ and, consequently, that the projection $\phi_{N_x}(M)$ contains a line segment orthogonal to $P_x$ at least that long. Since $\phi_{P_x}(M)$ contains an $(n-1)$-dimensional ball with radius half of that, it has $(n-1)$-dimensional measure at least $\frac{\omega_{n-1}}{2^{n-1}} \big( \frac{\rho - 1}{\rho} \big)^{n-1} d_M(x,y)^{n-1}$. Because orthogonal projection onto $P_x$ factors through $N_x$, Lemma \ref{chakerian} implies that
\[
\mathrm{Area}(\phi_{N_x}(M)) \geq \frac{\omega_{n-1}}{n 2^{n-1}} \Big( \frac{\rho - 1}{\rho} \Big)^n d_M(x,y)^n.
\]
Arguing as in the first case, one has that
\begin{equation}\label{case 2 bound}
\mathrm{Area}(M) > 2\mathrm{Area}(\phi_{N_x}(M)) = \frac{\omega_{n-1}}{n 2^{n-2}} \Big( \frac{\rho - 1}{\rho} \Big)^n d_M(x,y)^n.
\end{equation}

\noindent \textit{Case (3):} If $H_x^s \neq N_x \neq \tilde{H}_x$, then, as discussed above, an $(n-2)$-dimensional subspace $V$ of $P_x$ contains an $(n-2)$-dimensional ball of radius $\big( \frac{\rho - 1}{2\rho} \big) d_M(x,y)$ centered at $x$. Let $\Pi_{**}$ be the two-dimensional plane in $\real^{n+1}$ containing $\ell_{xy}$ and the orthogonal complement to $V$ in $N_x \cap H_x^s$, and let $\kappa_{**} = M \cap \Pi_{**}$. By Lemma \ref{width lower bound for curves}, the width of $\kappa_{**}$ in the direction orthogonal to $\ell_{xy}$ is at least $\big( \frac{\rho - 1}{\rho} \big) d_\kappa (x,y)$, so, as in the second case, it follows from Lemma \ref{chakerian} that the $(n-1)$-dimensional measure of the projection $\phi_{N_x \cap H_x^s} (M)$ is at least $\frac{\omega_{n-2}}{(n-1) 2^{n-2}} \big( \frac{\rho - 1}{\rho} \big)^{n-1} d_M(x,y)^{n-1}$.
\smallskip 

Let $\Pi_*$ be the two-dimensional plane in $\real^{n+1}$ containing $\ell_{xy}$ and the orthogonal complement to $N_x \cap H_x^s$ in $N_x$, and let $\kappa_* = M \cap \Pi_*$. By Lemma \ref{max width bound}, the width of $\kappa_*$ in the direction of either $\ell_{xy}$ or its orthogonal complement is at least $\frac{1}{2} d_\kappa (x,y)$. In the former case,
\[
\mathrm{Area}(\phi_{N_x}(M)) \geq \frac{\omega_{n-2}}{n(n-1) 2^{n-1}} \Big( \frac{\rho - 1}{\rho} \Big)^{n-1} d_M (x,y)^n;
\]
in the latter case, denoting by $W$ the hyperplane in $\real^{n+1}$ containing $\ell_{xy}$ and $N_x \cap H_x^s$, $\mathrm{Area}(\phi_W(M))$ satisfies the same lower bound. In either case,
\begin{equation}\label{case 3 bound}
\mathrm{Area}(M) > \frac{\omega_{n-2}}{n(n-1) 2^{n-2}} \Big( \frac{\rho - 1}{\rho} \Big)^{n-1} d_M (x,y)^n.
\end{equation}\\

Taken together, \eqref{case 1 bound}, \eqref{case 2 bound}, and \eqref{case 3 bound} show that
\[
\mathrm{Area}(M) > \frac{m_n(\rho)}{2^{n-2}} \Big( \frac{\rho - 1}{\rho} \Big)^{n-1} d_M(x,y)^n
\]
for
\begin{equation}\label{formula for m}
\begin{aligned}
m_n(\rho) &= \min \Big\{ \frac{\omega_n}{2} \Big( \frac{\rho - 1}{\rho} \Big), \frac{\omega_{n-1}}{n} \Big( \frac{\rho - 1}{\rho} \Big), \frac{\omega_{n-2}}{n(n-1)} \Big\}\\
&= \min \Big\{ \frac{\omega_{n-1}}{n} \Big( \frac{\rho - 1}{\rho} \Big), \frac{\omega_{n-2}}{n(n-1)} \Big\},
\end{aligned}
\end{equation}
where the final equality follows from the fact that $2\omega_{n-1} \leq n\omega_n$, a special case of an inequality due to Alzer \cite[Theorem 3.6]{Alzer2008} and Klain--Rota \cite[Proposition 3.2]{KlainRota1997} that is also proved in Lemma \ref{beta bound}. Thus, the result holds for
\[
\mathcal{I}_n(\rho) = \frac{m_n(\rho)}{2^{n-2}} \Big( \frac{\rho - 1}{\rho} \Big)^{n-1}.
\]
The condition on the range of $\mathcal{I}_n$ is established in Remark \ref{formula for I}. \end{proof}

\begin{remark}\label{formula for I}
The two expressions, $\frac{\omega_{n-1}}{n} \big( \frac{\rho - 1}{\rho} \big)$ and $\frac{\omega_{n-2}}{n(n-1)}$, that appear in \eqref{formula for m} agree at
\begin{align}\label{rho_n_formula}
\displaystyle \rho_n = \frac{(n-1) \omega_{n-1}}{(n-1) \omega_{n-1} - \omega_{n-2}}.
\end{align}
The proof of Proposition \ref{intrinsic-extrinsic proposition 1} then gives the following explicit formula for $\mathcal{I}_n$:
\begin{equation}\label{explicit formula for I}
\mathcal{I}_n(\rho) = \left\{ \begin{array}{ccc} \frac{\omega_{n-1}}{n 2^{n-2}} \big( \frac{\rho - 1}{\rho} \big)^n & \textrm{if} & 1 < \rho \leq \rho_n , \\ \frac{\omega_{n-2}}{n(n-1) 2^{n-2}} \big( \frac{\rho - 1}{\rho} \big)^{n-1} & \textrm{if} & \rho > \rho_n . \end{array} \right. 
\end{equation}
The optimal constant in Proposition \ref{intrinsic-extrinsic proposition 1} is bounded above by the area $\overline{\mathcal{I}}_n(\rho)$ of a right circular cylinder of height $\frac{1}{\rho}$ for which the intrinsic distance between the centers of the base and top is $1$, as in Figure \ref{cylinder}.
\begin{figure}\caption{A right circular cylinder whose area bounds the optimal constant in Proposition \ref{intrinsic-extrinsic proposition 1} above, cf. Remark \ref{formula for I}.}
\label{cylinder}
\begin{center}
\includegraphics[scale=0.15]{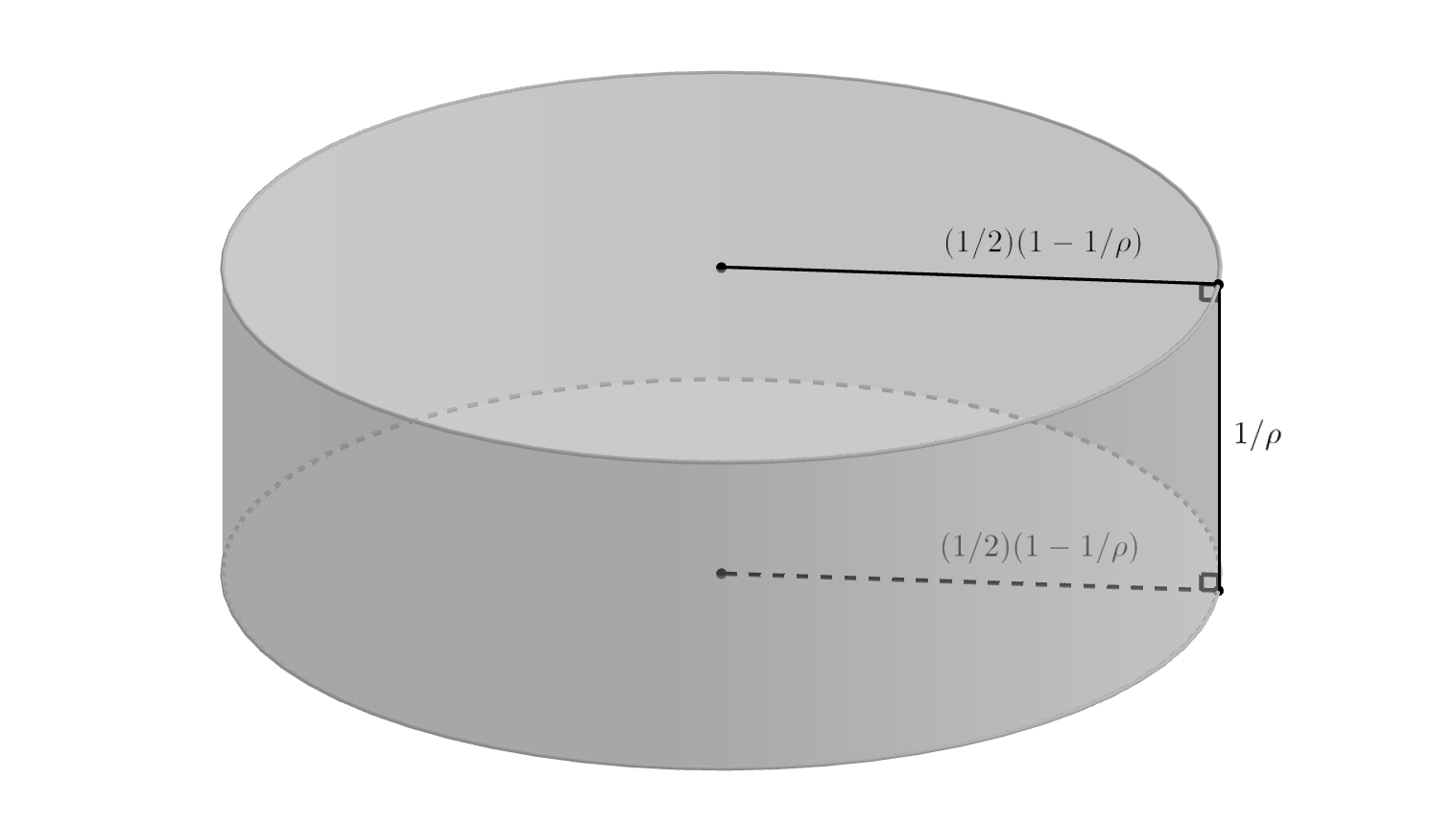}
\end{center}
\end{figure}
An elementary calculation shows that
\[
\overline{\mathcal{I}}_n(\rho) = \frac{\omega_n}{2^{n-1}} \Big( \frac{\rho - 1}{\rho} \Big)^{n-1} \Big( 1 + \frac{n-1}{\rho} \Big).
\]
The optimal constant in Proposition \ref{intrinsic-extrinsic proposition 1} must go to zero as $\rho \to 1 +$, as $\overline{\mathcal{I}}_n (\rho)$ does. As $\rho \to \infty$, $\overline{\mathcal{I}}_n(\rho)$ increases to $\frac{\omega_n}{2^{n-1}}$, bounding the range of $\mathcal{I}_n$. For $\rho \geq \rho_n$,
\[
\frac{\overline{\mathcal{I}}_n(\rho)}{\mathcal{I}_n(\rho)} = (n-1)\pi + \frac{(n-1)^2 \pi}{\rho}.
\]
Thus, on that portion of its domain, the function $\mathcal{I}_n$ in \eqref{explicit formula for I} is suboptimal by at most a factor of $n(n-1)\pi$ and, asymptotically, by at most a factor of $(n-1)\pi$.
\end{remark}

A long-standing conjecture of Alexandrov states that, among all convex surfaces $M^2$ in $\real^3$ with intrinsic diameter $d$, the unique surface of maximal area is the doubled disk of radius $\frac{d}{2}$ (see \cite{Alexandrov1955,GrovePetersen1991,Ghomi2017}). An optimal result in Proposition \ref{intrinsic-extrinsic proposition 1} could provide a converse to Alexandrov's conjecture. When the hyperplanes $\ell_{xy}^\perp$ and $\ell_{yx}^\perp$ orthogonal to $\ell_{xy}$ at $x$ and $y$, respectively, are support planes for $M$, as in Case (1) of the proof of Proposition \ref{intrinsic-extrinsic proposition 1}, the proof gives a stronger result, which is asymptotically optimal in light of the bound given by $\overline{\mathcal{I}}_n(\rho)$ above. This occurs in several geometrically natural circumstances, such as when $x$ and $y$ realize the extrinsic diameter of $M$.

\begin{corollary}\label{asymptotically optimal cases}
Let $M^n$ be a closed, convex hypersurface in $\real^{n+1}$. Suppose there exist distinct $x,y \in M$ such that $\frac{d_M(x,y)}{|y-x|} \geq \rho$ and, also, that the hyperplanes $\ell_{xy}^\perp$ and $\ell_{yx}^\perp$ orthogonal to $\ell_{xy}$ at $x$ and $y$, respectively, are support planes for $M$. Let $\mathcal{I}^*_n(\rho) = \frac{\omega_n}{2^{n-1}} \big( \frac{\rho - 1}{\rho} \big)^n$, as in \eqref{case 1 bound}. Then,
\[
\mathrm{Area}(M) > \mathcal{I}_n^* (\rho) d_M(x,y)^n.
\]
\end{corollary}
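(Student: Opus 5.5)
The plan is to rerun Case (1) of the proof of Proposition \ref{intrinsic-extrinsic proposition 1}, noting that the hypotheses of the corollary place us exactly in that case. Let $N_x = \ell_{xy}^\perp$ be the hyperplane through $x$ orthogonal to $\overline{xy}$. By assumption $N_x$ is a supporting hyperplane at $x$, so we may take $H_x^s = N_x$. Likewise $\ell_{yx}^\perp$ is a supporting hyperplane at $y$ and is parallel to $N_x$, so we may take $H_y^s = \ell_{yx}^\perp$. Then $\tilde H_x$, the hyperplane through $x$ parallel to $H_y^s$, is $N_x$ itself, and hence $P_x = N_x \cap H_x^s \cap \tilde H_x = N_x$ is a hyperplane.

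Next, for each $z \neq x$ in $P_x$, let $\Pi_z$ be the plane through $\overline{xy}$ and $z$, and set $\kappa_z = M \cap \Pi_z$. This is a closed convex plane curve. The lines $\Pi_z \cap \ell_{xy}^\perp$ and $\Pi_z \cap \ell_{yx}^\perp$ are support lines for $\kappa_z$ orthogonal to $\ell_{xy}$ at $x$ and $y$. Moreover $d_{\kappa_z}(x,y) \geq d_M(x,y) \geq \rho|y-x|$, because paths in $\kappa_z$ are paths in $M$. Corollary \ref{width lower bound for curves 2} then says both support lines of $\kappa_z$ parallel to $\ell_{xy}$ lie at distance at least $r = \frac{\rho-1}{2\rho}\, d_{\kappa_z}(x,y)$ from $\ell_{xy}$. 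Since $d_{\kappa_z}(x,y) \geq d_M(x,y)$, we may take $r = \frac{\rho-1}{2\rho} d_M(x,y)$.

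Letting $z$ range over all directions in $P_x$, convexity of the enclosed domain shows that the orthogonal projection $\phi_{P_x}(M)$ contains the closed $n$-ball of radius $r$ about $x$. Each line orthogonal to $P_x$ through an interior point of this ball meets $M$ in exactly two points, one on each side. So the preimage of the open ball is two disjoint open subsets of $M$. Projection restricted to each is a $1$-Lipschitz homeomorphism onto the ball, so each has area at least $\omega_n r^n$. This gives
\[
\mathrm{Area}(M) \geq 2\omega_n r^n = \frac{\omega_n}{2^{n-1}} \Big(\frac{\rho-1}{\rho}\Big)^n d_M(x,y)^n = \mathcal{I}_n^*(\rho)\, d_M(x,y)^n .
\]

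The main obstacle is strictness. Equality would require both preimages to project isometrically, and hence to be flat pieces parallel to $P_x$. It would also require them to exhaust $M$ up to measure zero. That is impossible: $M$ must connect $x$ to $y$, and these points lie in the two distinct parallel hyperplanes $\ell_{xy}^\perp$ and $\ell_{yx}^\perp$. The part of $M$ over the boundary sphere of the projected ball, together with any parts outside it, must therefore contain a piece with positive area. Alternatively, two flat $n$-disks with nothing joining them cannot form a closed hypersurface, since $|y-x|>0$. One could also appeal to the same remark made at the end of \eqref{case 1 bound}, which gives the strict inequality.
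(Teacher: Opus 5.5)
Your proposal is correct and is exactly the paper's argument: the hypotheses let you take $H_x^s = \tilde{H}_x = N_x$, which is Case~(1) of the proof of Proposition~\ref{intrinsic-extrinsic proposition 1}, and \eqref{case 1 bound} is then the claimed inequality. Your strictness step is as terse as the paper's, and it closes cleanly as follows: if both graph pieces projected without loss of area, they would be the flat $n$-disks of radius $r$ centered at $x$ and $y$, so the enclosed domain would contain their convex hull, a cylinder of height $|y-x|$, and Proposition~\ref{crofton corollary} would give $\mathrm{Area}(M) \geq 2\omega_n r^n + n\omega_n r^{n-1}|y-x| > 2\omega_n r^n$.
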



\section{When intrinsic displacement is comparable to extrinsic displacement}
\label{int comp to ext}


The goal of this section is to prove a lower bound for the volume enclosed by a convex hypersurface $M^n$ in $\real^{n+1}$, in Proposition \ref{enclosed volume bound}, and an associated lower bound for the area of $M$ in Corollary \ref{intrinsic-extrinsic proposition 2}. Corollary \ref{intrinsic-extrinsic proposition 2} plays a role complementary to that of Proposition \ref{intrinsic-extrinsic proposition 1} in the proof of Theorem \ref{main theorem}. For example, Corollary \ref{intrinsic-extrinsic proposition 2} implies that a surface shaped like a long, narrow cigar with small area does not admit a fixed-point-free map that takes each point far from itself. 


\begin{proposition}\label{enclosed volume bound}
Let $M^n$ be a closed, convex hypersurface in $\real^{n+1}$ enclosing a domain $\Omega^{n+1}$ of volume $\mathrm{Vol}(\Omega)$. Suppose $\alpha$ is a fixed-point-free map from $M$ to itself, let $\rho(\alpha) = {\displaystyle \max_{x \in M}} \frac{d_M(x,\alpha(x))}{|\alpha(x) - x|}$, and let $K_{n+1} = \frac{2}{\sqrt{3}(n+1)!}$. Then,
\[
	\mathrm{Vol}(\Omega) > \Big( \frac{K_{n+1}}{\rho(\alpha)^{n+1}} \Big) \mu(\alpha)^{n+1}. 
\]
\end{proposition}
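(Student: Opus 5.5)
The plan is to reduce everything to a lower bound on the \emph{minimum width} $\underline{w}(M)$. Write $r = \mu(\alpha)/\rho(\alpha)$. By the definition of $\rho(\alpha)$, every $x \in M$ satisfies
\[
|\alpha(x) - x| \geq \frac{d_M(x,\alpha(x))}{\rho(\alpha)} \geq r,
\]
so $\alpha$ moves every point a Euclidean distance at least $r$. I will first show $\underline{w}(M) \geq r$. Then I will use an inductive slicing argument built on Lemma \ref{chakerian} to turn the width bound into a volume bound. The constant $K_{n+1} = \frac{2}{\sqrt{3}(n+1)!}$ suggests exactly this: the planar base case should be Pál's inequality, which says a planar convex body of minimum width $r$ has area at least $r^2/\sqrt{3}$, with equality only for the equilateral triangle.

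\textbf{Step 1 (every direction is realized by a displacement).} Since $\alpha$ has no fixed points, the map $F(x) = \frac{\alpha(x)-x}{|\alpha(x)-x|}$ is a continuous map $M \to S^n$. For a point $x \in M$, choose an outward unit normal $\nu(x)$ of a supporting hyperplane. Because $\alpha(x) \in M$ lies in the supporting half-space, we have $F(x)\cdot \nu(x) \leq 0$, so $F(x) \neq \nu(x)$. Hence $F$ is homotopic to $-\nu$ by straight-line homotopy followed by normalization; this never passes through $0$ because $F(x) \ne \nu(x)$.

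The normal map is multivalued at non-smooth points, so I would either approximate $M$ by smooth strictly convex hypersurfaces or replace $\nu$ by the radial map from an interior point. Either way, $F$ is homotopic to a degree $\pm 1$ map $M \cong S^n \to S^n$, and is therefore surjective. So for every unit vector $u$ there is some $x \in M$ with $\alpha(x) - x = t u$ and $t \geq r$. By convexity the segment $\overline{x\alpha(x)}$ lies in $\Omega$. In particular $w_\ell(M) \geq r$ for every line $\ell$, that is, $\underline{w}(M) \geq r$.

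\textbf{Step 2 (induction on dimension).} I claim that any compact convex body $C \subset \real^{k}$, $k \geq 2$, with minimum width at least $r$ has $\mathrm{Vol}_k(C) \geq \frac{2}{\sqrt{3}\,k!} r^k$. The base case $k=2$ is Pál's inequality.

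For the inductive step, fix a direction $u$ and let $H = u^\perp$. The minimum width of $C$ is at least $r$, so its width in direction $u$ is at least $r$. Two parallel supporting hyperplanes of a compact convex body admit a segment in $C$ orthogonal to them joining them, so $C$ contains a segment parallel to $u$ of length at least $r$. The projection $\phi_H(C)$ is a convex body in $H \cong \real^{k-1}$. Its width in any direction $v \in H$ equals the width of $C$ in direction $v$, hence is at least $r$. Lemma \ref{chakerian} (with $\real^{k}$ in place of $\real^{n+1}$) then gives
\[
\mathrm{Vol}_k(C) \geq \frac{r\,\mathrm{Vol}_{k-1}(\phi_H(C))}{k} \geq \frac{r}{k}\cdot\frac{2}{\sqrt{3}\,(k-1)!} r^{k-1}.
\]
Applying this with $C = \Omega$ and $k = n+1$ yields $\mathrm{Vol}(\Omega) \geq K_{n+1} r^{n+1}$, which is the stated bound up to strictness. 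For $n = 0$ the statement is trivial, so assume $n \geq 1$.

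\textbf{Main obstacle.} The main difficulty is making Step 1 rigorous for non-smooth $M$. If approximating $M$ by smooth convex hypersurfaces is awkward because $\alpha$ does not pass to the approximants, I would instead fix an interior point $o$ and use the homotopy between $F$ and the radial map $x \mapsto -(x-o)/|x-o|$. This homotopy stays away from $0$ because $\alpha(x)-x$ never points outward through the supporting hyperplane at $x$ and the radial direction is strictly outward.

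The strict inequality is a secondary issue. I would get it from the equality cases: equality would force every projection down to dimension $2$ to be an equilateral triangle of width $r$ and equality in Lemma \ref{chakerian} at each stage. That should make $\Omega$ a simplex-like body. On such a body, Step 1 applied in the direction of a shortest width forces $d_M(x,\alpha(x)) = \rho(\alpha)|\alpha(x)-x|$ with $|\alpha(x)-x| = r$, and I would show these conditions are incompatible. If this becomes delicate, it may be simpler to perturb $r$ slightly.
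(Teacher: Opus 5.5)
Your overall plan matches the paper's: bound $\underline{w}(M)$ below by $r=\mu(\alpha)/\rho(\alpha)$ using surjectivity of the chordal Gauss map, then apply the P\'al--Firey bound. However, Step~2 rests on a false claim, and the strict inequality is not proved.

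\textbf{What works.} Your Step~1 is correct, and it is more elementary than the paper's Lemma~\ref{chordal gauss map}, which goes through Hirsch--Milnor. For an interior point $o$ and any outward supporting normal $\nu$ at $x$, one has $(x-o)\cdot\nu>0\ge(\alpha(x)-x)\cdot\nu$. So the straight-line homotopy from $\Psi_\alpha$ to the inward radial map never vanishes. Your Step~2 derives Firey's constant from P\'al's via Lemma~\ref{chakerian}; the paper instead quotes Theorem~\ref{pal-firey}.

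\textbf{The false claim in Step~2.} You assert that two parallel supporting hyperplanes of a convex body are joined by a segment of $C$ orthogonal to them. This is false. The triangle with vertices $(0,0)$, $(1,0)$, $(2,1)$ has vertical width $1$, but its longest vertical chord has length $\frac{1}{2}$.

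The conclusion you need, a chord of length $\ge r$ parallel to an arbitrary $u$, is still true, but for a different reason. The longest chord in direction $u$ is the radial function of $C-C$. The minimum of that radial function equals the minimum of the support function of $C-C$, which is $\underline{w}(C)$.

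The cleanest repair is to induct on the hypothesis ``$C$ contains a segment of length at least $r$ parallel to every direction.'' This hypothesis behaves well at every stage:
\begin{itemize}
\item Step~1 gives it for $\Omega$ directly, via the segments $\overline{x\alpha(x)}$.
\item It is inherited by $\phi_H(C)$, because orthogonal projection onto $H$ is an isometry on segments parallel to $H$.
\item In the plane it implies minimum width at least $r$, so P\'al's inequality applies.
\end{itemize}

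\textbf{Strictness.} The strict inequality is part of the statement, and your proposal only establishes the non-strict one. ``Perturbing $r$'' cannot produce strictness, and the ``simplex-like body'' heuristic points the wrong way. The paper handles $n\ge 2$ by citing Bezdek's strictly larger constant $F_{n+1}>K_{n+1}$ (Remark~\ref{bezdek bound}). It handles $n=1$ by a separate analysis of the equilateral triangle.

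Within your framework you can avoid Bezdek.
\begin{itemize}
\item Case $k=3$. Since $u$ is arbitrary and $\phi_H(C)$ has positive volume, equality forces the longest chord in \emph{every} direction to have length exactly $r$. Then $C-C$ is the ball of radius $r$, so $C$ has constant width $r$. Each $\phi_H(C)$ therefore has constant width, so it cannot be the equilateral triangle that equality in P\'al's inequality requires.
\item Case $k\ge 4$. Strictness in dimension $k-1$ gives strictness in dimension $k$.
\item Case $n=1$. Your sketch can be completed. Equality makes $\Omega$ an equilateral triangle of side $\mathcal{L}$ with altitude $r$.
  \begin{itemize}
  \item The point $x$ produced in an altitude direction must have $\overline{x\alpha(x)}$ equal to an altitude, since that is the only chord of length $r$ in that direction.
  \item Hence $d_M(x,\alpha(x))=\frac{3}{2}\mathcal{L}$.
  \item Since $\mu(\alpha)\le d_M(x,\alpha(x))\le\rho(\alpha)r=\mu(\alpha)$, this forces $\mu(\alpha)=\frac{3}{2}\mathcal{L}$, which is half the perimeter.
  \item So $\alpha$ is the half-perimeter shift, and $\rho(\alpha)=\mu(\alpha)/r=\sqrt{3}$.
  \item But that map has $\rho(\alpha)=2$, a contradiction.
  \end{itemize}
\end{itemize}
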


\noindent By the isoperimetric inequality, this implies a lower bound for the area of $M$.

\begin{corollary}\label{intrinsic-extrinsic proposition 2}
Let $M^n$ be a closed, convex hypersurface in $\real^{n+1}$. If $\alpha$ is a fixed-point-free map from $M$ to itself, then
\[
\mathrm{Area}(M) > \mathcal{J}_n(\rho(\alpha)) \mu(\alpha)^n,
\]
where $\mathcal{J}_n(\rho) = \sigma_n \big( \frac{K_{n+1}}{\omega_{n+1}} \big)^{\frac{n}{n+1}} \frac{1}{\rho^n}$, $\sigma_n$ is the area of the unit sphere in $\real^{n+1}$, and $\rho(\alpha)$ and $K_{n+1}$ are as in Proposition \ref{enclosed volume bound}.
\end{corollary}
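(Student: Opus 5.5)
The plan is to deduce Corollary \ref{intrinsic-extrinsic proposition 2} directly from Proposition \ref{enclosed volume bound} by the classical isoperimetric inequality in $\real^{n+1}$. For a compact convex domain $\Omega$ with boundary $M$, that inequality reads
\[
\mathrm{Area}(M) \geq \sigma_n \Big( \frac{\mathrm{Vol}(\Omega)}{\omega_{n+1}} \Big)^{\frac{n}{n+1}}.
\]
This is the statement that the ball of the same volume has the least boundary measure. The normalization uses $\sigma_n = (n+1)\omega_{n+1}$: a ball of radius $r$ has volume $\omega_{n+1} r^{n+1}$ and area $\sigma_n r^n$. For convex bodies the boundary measure is the $n$-dimensional Hausdorff measure, so $\mathrm{Area}(M)$ is exactly the quantity appearing in that inequality.

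Next I substitute the strict lower bound $\mathrm{Vol}(\Omega) > \frac{K_{n+1}}{\rho(\alpha)^{n+1}} \mu(\alpha)^{n+1}$ from Proposition \ref{enclosed volume bound}. Since $t \mapsto t^{n/(n+1)}$ is strictly increasing on $[0,\infty)$, this gives
\[
\mathrm{Area}(M) \geq \sigma_n \Big( \frac{\mathrm{Vol}(\Omega)}{\omega_{n+1}} \Big)^{\frac{n}{n+1}} > \sigma_n \Big( \frac{K_{n+1}}{\omega_{n+1}} \Big)^{\frac{n}{n+1}} \frac{\mu(\alpha)^n}{\rho(\alpha)^n} = \mathcal{J}_n(\rho(\alpha)) \mu(\alpha)^n .
\]
Strictness comes from the strict inequality in Proposition \ref{enclosed volume bound}.

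Two degenerate points need checking. First, $\rho(\alpha)$ must be a finite positive number. Because $\alpha$ is fixed-point-free and $M$ is compact, $|\alpha(x) - x|$ is bounded below by a positive constant, so the ratio is continuous and attains its maximum, and that maximum is at least $1$. Second, we need $\mathrm{Vol}(\Omega) > 0$. This holds automatically because Proposition \ref{enclosed volume bound} gives a positive lower bound, given $\mu(\alpha) > 0$.

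The only real obstacle is making sure the hypotheses match: the isoperimetric inequality must be applied with the correct constants, and the area notion must agree with the one in the paper. Both are standard for convex bodies, so the corollary is essentially a one-line consequence of Proposition \ref{enclosed volume bound}.
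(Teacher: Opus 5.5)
Your proposal is correct and matches the paper's proof: it applies the isoperimetric inequality $\mathrm{Area}(M) \geq \sigma_n \big( \frac{\mathrm{Vol}(\Omega)}{\omega_{n+1}} \big)^{\frac{n}{n+1}}$ and then substitutes the strict volume bound from Proposition \ref{enclosed volume bound}. Your checks that $\rho(\alpha)$ is finite and at least $1$ and that $\mu(\alpha) > 0$ are details the paper leaves implicit.
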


The core idea of the proof of Proposition \ref{enclosed volume bound} is as follows: if $M^n$ is a closed, convex hypersurface in $\real^{n+1}$ and $\alpha$ is a fixed-point-free map from $M$ to itself whose intrinsic displacement is never too large relative to its extrinsic displacement, the region enclosed by $M$ must be reasonably wide in every direction relative to the intrinsic displacement of $\alpha$. This bounds its volume below by a result of P\'{a}l \cite{Pal1921} and Firey \cite{Firey1965} quoted in Theorem \ref{pal-firey}. The continuity of $\alpha$ enters via Lemma \ref{chordal gauss map} below, which is based on the following result, drawn from work of Hirsch and Milnor.

\begin{lemma}[cf. {\cite[Lemma 3]{HirschMilnor1964}}]\label{hirsch-milnor}
Let $\alpha_0$ and $\alpha_1$ be fixed-point-free maps from $S^n$ to itself. Then, there is a homotopy $\Phi : [0,1] \times S^n \to S^n$ from $\Phi_0 = \alpha_0$ to $\Phi_1 = \alpha_1$ such that, for all $t \in [0,1]$, $\Phi_t : S^n \to S^n$ has no fixed points.
\end{lemma}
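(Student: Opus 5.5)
The goal is to build the homotopy through fixed-point-free maps by passing through the antipodal map $a(x) = -x$. It suffices to connect an arbitrary fixed-point-free $\alpha : S^n \to S^n$ to $a$ through fixed-point-free maps. The homotopies $\alpha_0 \simeq a$ and $\alpha_1 \simeq a$ can then be concatenated, reversing the second, to give $\Phi$.

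Fix a fixed-point-free $\alpha$ and view $S^n \subset \real^{n+1}$ as the unit sphere. Since $\alpha(x) \neq x$ for every $x$, the vector $\alpha(x)$ is never equal to $x$. The key observation is that the straight-line segment from $\alpha(x)$ to $-x$ avoids the origin except possibly in one case. A point $(1-s)\alpha(x) + s(-x)$ vanishes only if $\alpha(x)$ and $-x$ are antipodal unit vectors, that is, only if $\alpha(x) = x$. That case is excluded.

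This suggests defining
\[
\Phi_t(x) = \frac{(1-t)\alpha(x) - t x}{|(1-t)\alpha(x) - t x|}, \qquad t \in [0,1].
\]
The map is continuous in $(t,x)$ because the denominator never vanishes, and it runs from $\Phi_0 = \alpha$ to $\Phi_1 = a$. It remains to check that each $\Phi_t$ is fixed-point-free. Suppose $\Phi_t(x) = x$. Then $(1-t)\alpha(x) - t x = \lambda x$ for some $\lambda > 0$, so $(1-t)\alpha(x) = (\lambda + t)x$. If $t = 1$, this reads $0 = (\lambda+1)x$, which is impossible. If $t < 1$, then $\alpha(x)$ is a positive multiple of $x$, and since both are unit vectors, $\alpha(x) = x$, a contradiction. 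So every $\Phi_t$ has no fixed points.

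Concatenating gives the lemma. Apply the construction to $\alpha_0$ on $[0,\tfrac12]$ and run it in reverse for $\alpha_1$ on $[\tfrac12,1]$, so that
\[
\Phi(t,x) = \Phi^{(0)}_{2t}(x) \ \text{for } t \le \tfrac12, \qquad \Phi(t,x) = \Phi^{(1)}_{2-2t}(x) \ \text{for } t \ge \tfrac12.
\]
The two pieces agree at $t = \tfrac12$, where both equal $a$, so $\Phi$ is continuous. The only real obstacle is the nonvanishing of the denominator and the positivity argument above; both reduce to the elementary fact that two unit vectors that are positive (respectively negative) multiples of one another are equal (respectively antipodal). No degree theory is needed.
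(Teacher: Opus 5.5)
Your proof is correct and takes the same route as the paper: you connect each fixed-point-free map to the antipodal map through fixed-point-free maps, then concatenate the two homotopies. The paper only cites the Hirsch--Milnor argument for the first step, whereas you write out the normalized straight-line homotopy $\frac{(1-t)\alpha(x) - tx}{|(1-t)\alpha(x) - tx|}$ explicitly and correctly check that its denominator never vanishes and that no $\Phi_t$ has a fixed point.
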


\begin{proof} This follows from the fact that any fixed-point-free map from $S^n$ to itself is homotopic to the usual antipodal map through fixed-point-free maps, which may be seen in the proof of the Hirsch--Milnor result cited above. \end{proof}

\begin{lemma}\label{chordal gauss map}
Let $M^n$ be a closed, convex hypersurface in $\real^{n+1}$ and $\alpha$ a fixed-point-free map from $M$ to itself. We define the \textbf{chordal Gauss map} of $\alpha$ to be the map $\Psi_\alpha : M \to S^n$ given by
\[
\Psi_\alpha(x) = \frac{\alpha(x) - x}{|\alpha(x) - x|}.
\]
Then, with $M$ and $S^n$ oriented as the boundaries of the domains they enclose, $\Psi_\alpha$ has degree $(-1)^{n+1}$. Consequently, $\Psi_\alpha$ is surjective.
\end{lemma}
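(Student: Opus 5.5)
Identify $M$ with $S^n$ by radial projection $r : M \to S^n$ from an interior point $o$ of the enclosed domain $\Omega$. For a closed convex hypersurface this is an orientation-preserving homeomorphism, with both spaces oriented as boundaries of the domains they enclose. Set $\beta = r \circ \alpha \circ r^{-1}$. Since $\alpha$ has no fixed points and $r$ is a bijection, $\beta$ is a fixed-point-free self-map of $S^n$. The chordal Gauss map depends continuously on the pair of maps as long as they stay fixed-point-free, because $|\alpha(x) - x|$ stays bounded away from zero along such a family.

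Apply Lemma \ref{hirsch-milnor} to get a homotopy $\Phi_t$ through fixed-point-free maps of $S^n$ from $\Phi_0 = \beta$ to $\Phi_1 = -\mathrm{id}$. Transport it back as $\alpha_t = r^{-1} \circ \Phi_t \circ r$. Each $\alpha_t$ is a fixed-point-free self-map of $M$, so $t \mapsto \Psi_{\alpha_t}$ is a homotopy of maps $M \to S^n$. Degree is a homotopy invariant, so it suffices to compute $\deg \Psi_{\alpha_1}$, where $\alpha_1(x) = r^{-1}(-r(x))$ sends $x$ to the other point of $M$ on the line through $x$ and $o$.

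For this $\alpha_1$, the vector $\alpha_1(x) - x$ points from $x$ through $o$ to the opposite side. So it is a positive multiple of $o - x$, and
\[
\Psi_{\alpha_1}(x) = \frac{o - x}{|o - x|} = -r(x).
\]
Thus $\Psi_{\alpha_1} = A \circ r$, where $A$ is the antipodal map of $S^n$. The map $r$ has degree $1$ and $A$ has degree $(-1)^{n+1}$, so $\deg \Psi_\alpha = \deg \Psi_{\alpha_1} = (-1)^{n+1}$. Since this degree is nonzero, $\Psi_\alpha$ is surjective: a map $M \to S^n$ missing a point factors through a contractible set and has degree $0$.

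The main obstacle is justifying the identification and the homotopy in the nonsmooth setting. One must check that radial projection is an orientation-preserving homeomorphism for a general closed convex hypersurface, which follows from star-shapedness with respect to interior points. One must also check that degree is well defined and homotopy invariant for continuous maps between topological $n$-spheres, which holds via homology. Lemma \ref{hirsch-milnor} then carries the topological content.
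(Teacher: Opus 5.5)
Your proposal is correct and follows essentially the same route as the paper: use Lemma \ref{hirsch-milnor} to show that all chordal Gauss maps are homotopic, then compute the degree for the involution sending $x$ to the other point where the line through $x$ and an interior point meets $M$. Your version additionally makes explicit the radial identification $r : M \to S^n$ needed to transport the Hirsch--Milnor homotopy, and the identity $\Psi_{\alpha_1} = -r$; the paper leaves both of these implicit.
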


\begin{proof} Lemma \ref{hirsch-milnor} implies that the chordal Gauss maps of all fixed-point-free maps from $M$ to itself belong to the same homotopy class, because a homotopy between fixed-point-free maps $\alpha_{0},\alpha_{1}$ through fixed-point-free maps gives a homotopy between $\Psi_{\alpha_{0}}$ and $\Psi_{\alpha_{1}}$, so it suffices to check the degree of any of them. For any point $p$ in the interior of the region bounded by $M$, the map that takes each $x \in M$ to the other point at which $\ell_{xp}$ intersects $M$ is a fixed-point-free involution whose chordal Gauss map has degree $(-1)^{n+1}$. \end{proof}

\begin{remark}\label{Yang discussion}
If the map $\alpha$ in Lemma \ref{chordal gauss map} is an involution, as in Theorem \ref{berger antipodal bound}, then projecting $M$ orthogonally onto the hyperplane orthogonal to each $u \in S^n$ gives an alternate proof of the surjectivity of $\Psi_{\alpha}$ based on the Borsuk--Ulam theorem. If the involution is topologically standard, i.e., conjugate to the antipodal map in the homeomorphism group of the sphere, then the Borsuk--Ulam theorem gives the conclusion directly, but, to the best of the authors' knowledge, it is not known whether all fixed-point-free involutions of spheres are of this type. Brouwer \cite{Brouwer1919} and Ker\'{e}kj\'{a}rt\'{o} \cite{Kerekjarto1919} proved that this is the case in dimension $2$, and Livesay \cite{Livesay1960} proved that this is the case in dimension $3$; however, Hirsch and Milnor showed that fixed-point-free involutions of spheres need not be smoothly or piecewise-linearly standard in dimensions $n \geq 5$ in \cite{HirschMilnor1964}, and work of Cappell--Shaneson \cite{CappellShaneson1976} and Akbulut--Kirby \cite{AkbulutKirby1979} implies that fixed-point-free involutions of the $4$-sphere need not be smoothly standard. In all dimensions, however, Hirsch and Milnor's result in \cite[Lemma 3]{HirschMilnor1964} implies that any quotient of the sphere by a fixed-point-free involution is homotopy-equivalent to real projective space, which implies by Yang's work in \cite{Yang1954} that the involution satisfies the conclusion of the Borsuk--Ulam theorem. 
\end{remark}

\begin{theorem}[P\'{a}l \cite{Pal1921} for $d = 2$; Firey \cite{Firey1965} for $d \geq 3$]\label{pal-firey}
Let $\Omega$ be a compact, convex set in $\real^d$, $d \geq 2$, with minimum width $\underline{w}(\Omega)$. Then, $\mathrm{Vol}(\Omega) \geq \frac{2}{\sqrt{3} d!} \underline{w}(\Omega)^d$. If $d = 2$, then equality occurs if and only if $\Omega$ is an equilateral triangle.
\end{theorem}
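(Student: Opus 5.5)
The plan is to argue by induction on the dimension $d$. For $d \geq 3$ the inductive step comes from Lemma \ref{chakerian} together with the fact that projecting onto a hyperplane does not decrease minimum width. The planar base case is Pál's theorem, and that is where the real work lies.

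\textbf{Inductive step.} Suppose the inequality holds in $\real^{d-1}$, and let $\Omega \subset \real^d$ be compact and convex with $\underline{w}(\Omega) = w$. If $w = 0$ there is nothing to prove, so assume $w > 0$.

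1. Choose points $p, q \in \Omega$ realizing the diameter $D$ of $\Omega$. Since $D \geq w_\ell(\Omega)$ for every line $\ell$, we have $D \geq w$.
2. Let $\lambda = \overline{pq}$ and let $H$ be a hyperplane orthogonal to $\lambda$. Then $\lambda \subset \Omega$ is a segment orthogonal to $H$.
3. For any line $\ell$ parallel to $H$, the projection of $\phi_H(\Omega)$ onto $\ell$ equals the projection of $\Omega$ onto $\ell$. Hence $\underline{w}(\phi_H(\Omega)) \geq \underline{w}(\Omega) = w$, with the minimum width of $\phi_H(\Omega)$ computed inside $H \cong \real^{d-1}$.
4. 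The induction hypothesis then gives $\mathrm{Area}(\phi_H(\Omega)) \geq \frac{2}{\sqrt{3}(d-1)!} w^{d-1}$.
5. Lemma \ref{chakerian}, applied with $d = n+1$, gives
\[
\mathrm{Vol}(\Omega) \geq \frac{D \cdot \mathrm{Area}(\phi_H(\Omega))}{d} \geq \frac{w}{d} \cdot \frac{2}{\sqrt{3}(d-1)!} w^{d-1} = \frac{2}{\sqrt{3}\, d!} w^d .
\]

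So the whole statement reduces to the planar case.

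\textbf{Base case $d=2$ (Pál).} We must show that a planar convex set of minimum width $w$ has area at least $w^2/\sqrt{3}$, which is the area of the equilateral triangle of height $w$, with equality only for that triangle.

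1. Among compact convex sets of minimum width at least $w$ contained in a large fixed disk, an area minimizer exists by Blaschke selection. Minimum width and area are continuous in the Hausdorff metric, and translation lets us assume containment in the disk because the diameter of a minimizer is bounded.
2. Next we show a minimizer must be a triangle. The intended argument is a cutting/shrinking argument. If the boundary has a vertex, or a smooth arc, that is not essential for attaining width $w$ in some critical direction, we can shave off a small cap. This strictly reduces area while keeping every width at least $w$, by a perturbation argument on the support function.
3. Among triangles, the widths are exactly the three altitudes $h_i = 2A/a_i$. With all $h_i \geq w$, minimizing the area $A$ is an elementary optimization. One shows the angles must all be $\pi/3$: if some angle exceeded $\pi/3$, a shear or rotation of one side would decrease area while preserving altitudes $\geq w$. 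This forces the equilateral triangle, and equality holds only there.

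\textbf{Main obstacle.} I expect step 2 of the base case, proving that a minimizer is a triangle, to be the main difficulty. My first attempt would be Pál's original approach. Inscribe in a minimizer the set of boundary points where the width $w$ is attained, and show that the minimizer coincides with the convex hull of three such "critical" support configurations. Failing that, I would approximate by polygons and reduce the number of sides one at a time while controlling the minimum width. The inductive step, by contrast, is routine given Lemma \ref{chakerian}.
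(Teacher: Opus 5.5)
Your reduction of the case $d \geq 3$ to the planar case is correct. Orthogonal projection onto $H$ leaves the width in every direction parallel to $H$ unchanged, so $\underline{w}(\phi_H(\Omega)) \geq \underline{w}(\Omega)$. The diameter segment lies in $\Omega$ and has length $D \geq \underline{w}(\Omega)$. Lemma~\ref{chakerian} then contributes a factor $\frac{1}{d}$ at each step, which is exactly the structure of the constant $\frac{2}{\sqrt{3}\,d!} = \frac{1}{\sqrt{3}} \cdot \frac{1}{3} \cdots \frac{1}{d}$. The paper itself gives no proof of this theorem; it is quoted from P\'al and Firey. So everything hinges on your planar base case, and there the argument has a genuine gap.

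Step 2 of the base case cannot work as described. Shaving off caps only ever replaces the body by a convex \emph{subset} with the same minimal width. So the most it can show is that a minimizer is \emph{reduced}, i.e., that no proper convex subset has minimal width $w$. But reduced planar bodies need not be triangles. Every body of constant width $w$ is reduced: at each boundary point $x$, with outer normal $u$, strict convexity means the support line orthogonal to $u$ touches the body only at $x$. Hence any convex subset missing $x$ has width less than $w$ in direction $u$. The regular pentagon is reduced for the same reason, since each vertex is the unique farthest point from the opposite side and any proper convex subset misses some vertex. At the disk, the Reuleaux triangle (area $\frac{\pi-\sqrt{3}}{2}w^2 \approx 0.705\,w^2$), or the regular pentagon, no vertex or arc is ``inessential.'' Your procedure therefore halts without producing a triangle, even though none of these bodies is a minimizer. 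Consequently no argument based on removing material can show that a minimizer is a triangle. You need a global comparison, via non-nested deformations or a direct inequality, showing that every reduced planar body of minimal width $w$ has area at least $\frac{w^2}{\sqrt{3}}$, with equality only for the equilateral triangle. That is the whole content of P\'al's theorem. Your fallback suggestions (P\'al's original argument, or removing sides from approximating polygons) are named but not carried out, so the base case, and with it the equality statement, remains unproved. Step 3 is elementary and fine. In step 1 the diameter bound is needed along a minimizing sequence, not for a minimizer; it follows from $\mathrm{Area}(\Omega) \geq \frac{1}{2} D\, \underline{w}(\Omega)$, the planar case of Lemma~\ref{chakerian}.
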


\begin{remark}\label{bezdek bound}
The sharp version of Theorem \ref{pal-firey} is known only for $d = 2$, but, although the optimal constant and domain are not known for $d \geq 3$, it is known that the ball is not optimal: a cone of height $1$ over a $(d-1)$-disk of radius $\frac{1}{\sqrt{3}}$ contains a unit-length line segment in every direction and has smaller volume than the $d$-ball of radius $\frac{1}{2}$. Recent work of Arman, Bondarenko, Nazarov, Prymak, and Radchenko \cite{ArmanBondarenkoNazarovPrymakRadchenko2025} implies that, in sufficiently high dimensions, the optimal constant in Theorem \ref{pal-firey} is less than $\frac{\omega_d}{2^d} 0.9^d$, where $\frac{\omega_d}{2^d}$ is the constant given by the ball. On the other hand, Bezdek \cite[Theorem 6.2]{Bezdek2013} outlines a proof that, for $d \geq 3$, $\mathrm{Vol}(\Omega) \geq F_d \,\underline{w}(\Omega)^d$ for an explicit $F_d > \frac{2}{\sqrt{3}d!}$; thus, equality in Theorem \ref{pal-firey} occurs only when $d = 2$ and $\Omega$ is an equilateral triangle. We discuss this in more detail in Remark \ref{bezdek remark}. 
\end{remark}

\begin{proof}[Proof of Proposition \ref{enclosed volume bound}] Let $M^n$ be a closed, convex hypersurface in $\real^{n+1}$ and $\alpha$ a fixed-point-free map from $M$ to itself. Set $\rho(\alpha) = \displaystyle \max_{x \in M} \frac{d_M(x,\alpha(x))}{|\alpha(x) - x|}$. Then, for all $x \in M$,
\[
	|\alpha(x) - x| \geq \frac{d_M(x,\alpha(x))}{\rho(\alpha)} \geq \frac{\mu(\alpha)}{\rho(\alpha)}.
\]
The surjectivity of the chordal Gauss map $\Psi_\alpha$ in Lemma \ref{chordal gauss map} therefore implies the minimum width of $\Omega$ is at least $\frac{\mu(\alpha)}{\rho(\alpha)}$. The lower volume bound then follows from Theorem \ref{pal-firey}. As discussed in Remark \ref{bezdek bound}, equality is impossible for $n \geq 2$, and equality for $n=1$ would imply that $M$ bounds an equilateral triangle. Any fixed-point-free map of such an $M$ must have $\rho(\alpha) \geq \sqrt{3}$: if $m$ is the midpoint of a side of $M$ and $p$ is the vertex opposite $m$, Lemma \ref{chordal gauss map} implies there is $x \in M$ with $\Psi_\alpha(x)$ parallel to the altitude of $M$ from $m$ to $p$, which must lie on the same side of $M$ as $m$. If $D$ is the distance from $x$ to a nearest vertex of $M$, then $d_M(x,\alpha(x)) = 3D$ and $|\alpha(x) - x| = \sqrt{3}D$, so $\frac{d_M(x,\alpha(x))}{|\alpha(x) - x|} = \sqrt{3}$. Equality would therefore imply $\mu(\alpha) \geq \frac{3}{2} \mathcal{L}$, where $\mathcal{L}$ is the length of the sides of $M$; however, $\mu(\alpha)$ is bounded above by $\frac{3}{2} \mathcal{L}$, with equality only for the map sending each $x \in M$ to the unique point at intrinsic distance $\frac{3}{2} \mathcal{L}$ from $x$. A direct calculation shows this map has $\rho(\alpha) = 2$, attained at points at distance $\frac{1}{4}\mathcal{L}$ from the vertices of $M$, so equality is impossible. \end{proof}

\begin{proof}[Proof of Corollary \ref{intrinsic-extrinsic proposition 2}] 
Let $\Omega$ denote the domain enclosed by $M$. By the isoperimetric inequality \cite{BuragoZalgaller1988} and Proposition \ref{enclosed volume bound}, $\mathrm{Area}(M) \geq \sigma_n \big( \frac{\mathrm{Vol}(\Omega)}{\omega_{n+1}} \big)^\frac{n}{n+1} > \mathcal{J}_n(\rho(\alpha)) \mu(\alpha)^n$. \end{proof}

For any improved constant $F_d$ in the conclusion of Theorem \ref{pal-firey}, one obtains a corresponding improvement to Proposition \ref{enclosed volume bound} and an improved rule for $\mathcal{J}_n$ in Corollary \ref{intrinsic-extrinsic proposition 2}, with $F_{n+1}$ in place of $K_{n+1}$. The proof of Proposition \ref{area bound proposition} below then gives a stronger lower bound for the value of the constant in Theorem \ref{main theorem}, as discussed in Appendix \ref{asymptotics_appendix}. Whatever the optimal lower bound for $\mathrm{Vol}(\Omega)$ in terms of $\rho(\alpha)$ and $\mu(\alpha)$ as in Proposition \ref{enclosed volume bound}, the example given by reflection about the center of a right circular cylinder as in Remark \ref{formula for I} implies that it must go to zero as $\rho \to \infty$. Even with an optimal result in Theorem \ref{pal-firey}, it may be difficult to obtain an optimal result in Corollary \ref{intrinsic-extrinsic proposition 2}, and then in Theorem \ref{main theorem}, via the proof given here, because the optimal domain in the isoperimetric inequality is not optimal in Theorem \ref{pal-firey}. However, for convex domains, the isoperimetric inequality admits several refinements in terms of invariants such as the inradius, discussed in \cite[Section 3]{Osserman1979}. Paired with information about the geometry of optimal or nearly optimal domains in Theorem \ref{pal-firey}, these might allow one to strengthen the conclusion of Corollary \ref{intrinsic-extrinsic proposition 2}. Recent work of Lucardesi and Zucco \cite{LucardesiZucco2025} gives information of this type about nearly optimal domains in the $2$-dimensional case of Theorem \ref{pal-firey}.

\section{Displacement, area, and mean width in convex hypersurfaces}
\label{main results}


The goals of this section are to prove the following result, which implies Theorem \ref{main theorem}, and to prove Theorem \ref{mean width}. Note that the conclusion of Theorem \ref{main theorem} is immediate if $\alpha$ has a fixed point.

\begin{proposition}\label{area bound proposition}
For each natural number $n$, there exist $h_n > 0$ and a continuous function $\mathcal{B}_n : [1,\infty) \to [h_n,\infty)$ such that, if $M^n$ is a closed, convex hypersurface in $\real^{n+1}$, $\alpha$ is a fixed-point-free map from $M$ to itself, and $\rho(\alpha)$ is the maximum ratio of intrinsic to extrinsic displacements of $\alpha$, as in Proposition \ref{enclosed volume bound}, then $\mathrm{Area}(M) > \mathcal{B}_n( \rho(\alpha) ) \mu(\alpha)^n$.
\end{proposition}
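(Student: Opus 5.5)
The plan is to combine Proposition \ref{intrinsic-extrinsic proposition 1} and Corollary \ref{intrinsic-extrinsic proposition 2}, which give lower bounds that are good in complementary regimes of $\rho = \rho(\alpha)$. Let $M$ and $\alpha$ be as in the statement, and write $\rho = \rho(\alpha) \geq 1$.

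Corollary \ref{intrinsic-extrinsic proposition 2} gives directly
\[
\mathrm{Area}(M) > \mathcal{J}_n(\rho)\mu(\alpha)^n, \qquad \mathcal{J}_n(\rho) = \frac{c_n}{\rho^n}, \quad c_n = \sigma_n \Big(\frac{K_{n+1}}{\omega_{n+1}}\Big)^{\frac{n}{n+1}}.
\]
This is continuous and positive on $[1,\infty)$, but it decays as $\rho \to \infty$.

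For large $\rho$ I use Proposition \ref{intrinsic-extrinsic proposition 1}. By compactness of $M$ and continuity of $x \mapsto d_M(x,\alpha(x))/|\alpha(x)-x|$ (well defined since $\alpha$ has no fixed points), the maximum $\rho$ is attained at some $x_0$. Put $y_0 = \alpha(x_0)$. Then $d_M(x_0,y_0)/|y_0-x_0| = \rho$ and $d_M(x_0,y_0) \geq \mu(\alpha)$. If $\rho > 1$, Proposition \ref{intrinsic-extrinsic proposition 1} gives
\[
\mathrm{Area}(M) > \mathcal{I}_n(\rho)\, d_M(x_0,y_0)^n \geq \mathcal{I}_n(\rho)\mu(\alpha)^n .
\]
Here $\mathcal{I}_n$ is increasing and continuous on $(1,\infty)$ and tends to $0$ as $\rho \to 1^+$. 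For $\rho = 1$ we simply have no information from this bound, so extend $\mathcal{I}_n(1) = 0$, which is continuous by the explicit formula in Remark \ref{formula for I}.

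Now define
\[
\mathcal{B}_n(\rho) = \max\{\mathcal{J}_n(\rho), \mathcal{I}_n(\rho)\}.
\]
Both bounds hold simultaneously, so $\mathrm{Area}(M) > \mathcal{B}_n(\rho(\alpha))\mu(\alpha)^n$. The function $\mathcal{B}_n$ is continuous as a maximum of continuous functions.

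It remains to show $\mathcal{B}_n$ is bounded below by some $h_n > 0$.
\begin{itemize}
\item $\mathcal{J}_n$ is strictly decreasing from $c_n > 0$ to $0$.
\item $\mathcal{I}_n$ is nondecreasing from $0$ to a positive limit.
\item So the two graphs cross at a unique $\rho^* \in (1,\infty)$.
\item Hence $\inf_{[1,\infty)} \mathcal{B}_n = \mathcal{J}_n(\rho^*) = \mathcal{I}_n(\rho^*) =: h_n > 0$, attained at $\rho^*$.
\end{itemize}
Equivalently, $\mathcal{B}_n \geq \mathcal{J}_n(R) > 0$ on $[1,R]$ for any $R$, and $\mathcal{B}_n \geq \mathcal{I}_n(R) > 0$ on $[R,\infty)$. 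Using the explicit formula \eqref{explicit formula for I}, the crossing $\rho^*$ solves a polynomial equation in $1/\rho$ of degree $n$, which matches the remark in the introduction that $h_n$ is a root of a degree-$n$ polynomial.

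The main subtlety is not the estimate but bookkeeping. We must check that $\rho(\alpha)$ is attained, so that a single pair $(x_0,\alpha(x_0))$ realizing it exists and Proposition \ref{intrinsic-extrinsic proposition 1} applies with $d_M(x_0,y_0) \geq \mu(\alpha)$. We must also handle the endpoint $\rho = 1$, where only Corollary \ref{intrinsic-extrinsic proposition 2} is available, and keep strict inequalities throughout. Theorem \ref{main theorem} then follows, with the fixed-point case being trivial since then $\mu(\alpha) = 0$.
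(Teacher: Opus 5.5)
Your proposal is correct and is essentially the paper's proof: you apply Proposition \ref{intrinsic-extrinsic proposition 1} at a point realizing $\rho(\alpha)$ together with Corollary \ref{intrinsic-extrinsic proposition 2}, set $\mathcal{B}_n = \max\{\mathcal{I}_n,\mathcal{J}_n\}$, and take $h_n$ to be the common value at the unique crossing point. Your extension $\mathcal{I}_n(1)=0$, which handles the case $\rho(\alpha)=1$, matches the paper's remark after the proof that $\mathcal{B}_n$ coincides with $\mathcal{J}_n$ near $\rho = 1$.
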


\begin{proof} Let $x \in M$ be a point such that $\rho(\alpha) = \frac{d_M(x,\alpha(x))}{|\alpha(x) - x|}$. By Proposition \ref{intrinsic-extrinsic proposition 1},
\[
\mathrm{Area}(M) > \mathcal{I}_n(\rho(\alpha)) d_M(x,\alpha(x))^n \geq \mathcal{I}_n(\rho(\alpha)) \mu(\alpha)^n.
\]
At the same time, by Corollary \ref{intrinsic-extrinsic proposition 2},
\[
\mathrm{Area}(M) > \mathcal{J}_n(\rho(\alpha)) \mu(\alpha)^n.
\]
Let $\mathcal{B}_n(\rho) = \max \{ \mathcal{I}_n(\rho), \mathcal{J}_n(\rho) \}$. Since $\mathcal{I}_n$ is increasing, $\mathcal{J}_n$ is decreasing, $\displaystyle \lim_{\rho \to 1^+} \mathcal{I}_n(\rho) = 0$, and $\displaystyle \lim_{\rho \to \infty} \mathcal{J}_n(\rho) = 0$, there exists $\rho_n^* > 1$ at which $\mathcal{I}_n$ and $\mathcal{J}_n$ assume their unique common value. The result follows, with $h_n$ given by $\mathcal{I}_n(\rho_n^*) = \mathcal{J}_n(\rho_n^*)$. \end{proof}

\noindent It seems likely that the invariant $\rho(\alpha)$ appearing in the proof of Proposition \ref{area bound proposition}, and in Proposition \ref{enclosed volume bound} and Corollary \ref{intrinsic-extrinsic proposition 2}, is greater than $1$ for all fixed-point-free maps $\alpha: M \to M$ of closed, convex hypersurfaces, but we do not see a simple proof of this in general. Because $\mathcal{I}_{n} \to 0$ as $\rho \to 1 +$, as explained in Remark \ref{formula for I}, and $\mathcal{J}_{n}$ is well defined and positive when $\rho = 1$, $\mathcal{B}_{n}$ coincides with $\mathcal{J}_{n}$ for $\rho$ sufficiently close to $1$ and is therefore well-defined and positive on the interval $[1,\infty)$. Because of this, it is not necessary to establish that $\rho(\alpha) > 1$ in order to prove Theorem \ref{main theorem}, and we do not address this problem here. It is proved in Appendix \ref{int_ext_appendix} in several cases where it follows from known results. 
\smallskip 

We now give two proofs of Theorem \ref{mean width}. The first part of the two proofs is the same and establishes the inequality in (\ref{mean width eqn}) and the fact that, if equality holds, the hypersurface in question has constant width. This part of the proof also implies the theorem in its entirety for smooth hypersurfaces. We begin with several preliminary results, in Lemmas \ref{constant width}, \ref{centrally symmetric and constant width}, \ref{isometric involution}, and \ref{filling lemma} and Theorem \ref{borisenko}. The first of these is a characterization of hypersurfaces of constant width.  For surfaces, this result is a classical theorem of Minkowski \cite{Minkowski1904}.

\begin{lemma}\label{constant width}
Let $M^n$ be a closed, convex hypersurface in $\real^{n+1}$, $n \geq 2$, and $Gr(2,n+1)$ the Grassmannian of unoriented $2$-planes through the origin in $\real^{n+1}$.  For $\Pi \in Gr(2,n+1)$, let $\phi_\Pi$ denote orthogonal projection onto $\Pi$, and suppose there is a constant $\xi$ such that $L(\partial \phi_\Pi (M)) = \xi$ for all $\Pi \in Gr(2,n+1)$. Then, $M$ has constant width $\frac{\xi}{\pi}$. 
\end{lemma}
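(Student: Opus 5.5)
The plan is to reduce the statement to the classical fact that a convex body all of whose two-dimensional projections have constant width must itself have constant width, and to establish that fact with Crofton's formula and an averaging argument.

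\textbf{Step 1: each projection has constant width.} Fix $\Pi \in Gr(2,n+1)$. The projection $\phi_\Pi(M)$ is a compact convex planar set; its boundary $\partial \phi_\Pi(M)$ is a closed convex curve, or a segment if $M$ is degenerate, and I will note that degeneracy is excluded. For a unit vector $u \in \Pi$, the width of $\phi_\Pi(M)$ in direction $u$ equals $w_{\ell_u}(M)$, because orthogonal projection onto $\ell_u$ factors through $\phi_\Pi$. Theorem \ref{crofton's formula} then gives
\[
\xi = L(\partial\phi_\Pi(M)) = \pi \, \Xi(\partial \phi_\Pi(M)) = \pi \cdot \frac{1}{\pi}\int_0^\pi w_{\ell_{u(\theta)}}(M)\, d\theta ,
\]
where $u(\theta)$ runs over the unit circle of $\Pi$. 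Hence the average of $w(u) := w_{\ell_u}(M)$ over every great circle of $S^n$ equals $\xi/\pi$.

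\textbf{Step 2: the averages determine $w$.} Write $w(u) = h(u) + h(-u)$, where $h$ is the support function. Then $w$ is an even continuous function on $S^n$, and its great-circle averages are all the constant $\xi/\pi$. Apply the same to the even function $w - \xi/\pi$, whose averages over every great circle vanish. The circle (Funk) transform on even continuous functions on $S^n$, $n \ge 2$, is injective. I will justify this either by expanding in spherical harmonics, where Funk--Hecke shows the transform acts on degree-$2k$ harmonics by the nonzero multiplier $P_{2k}(0)$, or by reducing to the classical Funk transform on $S^2$: for $n \ge 2$ one can restrict to every $3$-dimensional subspace, since great circles of $S^n$ contained in a great $2$-sphere are exactly that sphere's great circles. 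It follows that $w \equiv \xi/\pi$, so $M$ has constant width $\xi/\pi$.

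\textbf{Main obstacle.} The main obstacle is the injectivity step, particularly at only-continuous regularity. I will handle it with the restriction-to-$S^2$ trick. There a harmonic expansion in $L^2$ works, since the Funk transform is bounded on $L^2$ and diagonal with nonzero eigenvalues on even harmonics; this gives $w = \xi/\pi$ almost everywhere, and continuity of $w$ upgrades this to everywhere.

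A minor point is to confirm that Crofton's formula applies to degenerate projections, where the boundary is a doubled segment of length twice the segment. This is consistent with the convention that the width is the length of the projection.
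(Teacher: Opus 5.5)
Your proposal is correct and is essentially the paper's proof: Crofton's formula on each projection $\partial\phi_\Pi(M)$ shows that the even function $w_{\ell_u}(M)-\xi/\pi$ integrates to zero over every great circle of $S^n$, and injectivity of the spherical X-ray (Funk) transform on even functions forces it to vanish. The paper simply cites Helgason for that injectivity on $S^n$. Your reduction to great $2$-spheres, followed by the classical Funk--Hecke argument on $S^2$, is a valid self-contained substitute. Note that the multiplier description $P_{2k}(0)$ is literally correct only on $S^2$, which is the route you ultimately take.
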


\begin{proof} For each $u \in S^n$, let $\ell_u$ denote the line spanned by $u$. By Theorem \ref{crofton's formula},
\[
\xi = L(\partial \phi_\Pi (M)) = \frac{1}{2} \int_{\Pi \cap S^n} w_{\ell_u}(M) \ du 
\]
for all $\Pi \in Gr(2,n+1)$. Thus, $u \mapsto w_{\ell_u}(M) - \frac{\xi}{\pi}$ is an even function on $S^n$ that integrates to zero over every great circle. The invertibility of the spherical X-ray transform (i.e., the one-dimensional Radon transform) on even functions \cite[Theorem 3.1.7]{Helgason1999} implies that $w_{\ell_u}(M) - \frac{\xi}{\pi} = 0$ for all $u \in S^n$ and, therefore, that $M$ has constant width $\frac{\xi}{\pi}$. \end{proof}

\noindent The next result characterizes the round sphere among hypersurfaces of constant width by its central symmetry.

\begin{lemma}[cf. {\cite[p. 244]{Howard2006}}]\label{centrally symmetric and constant width}
Let $M^{n}$ be a closed, convex, centrally symmetric hypersurface of constant width in $\real^{n+1}$. Then, $M$ is a round sphere.
\end{lemma}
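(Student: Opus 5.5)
The plan is to work with the support function of the enclosed convex body $K$ and show that it is constant once the body has been centered. Translate $M$ so that its center of symmetry is the origin; then $K=-K$. For each $u\in S^n$, the support function $h(u)=\max_{z\in K}\langle z,u\rangle$ satisfies
\[
w_{\ell_u}(M)=h(u)+h(-u).
\]
Central symmetry gives $h(-u)=\max_{z\in -K}\langle -z,-u\rangle = h(u)$. So $w_{\ell_u}(M)=2h(u)$ for every $u$. If $M$ has constant width $w$, this forces $h\equiv \frac{w}{2}$. The support function determines a compact convex set uniquely, as the intersection of the half-spaces $\{z:\langle z,u\rangle\le h(u)\}$, and the closed ball of radius $\frac{w}{2}$ about the origin has exactly this support function. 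Hence $K$ is that ball and $M=\partial K$ is a round sphere.

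The steps, in order, are these:
\begin{enumerate}
\item Center $M$ at its point of symmetry.
\item Express the width in direction $\ell_u$ as the distance $h(u)+h(-u)$ between the two supporting hyperplanes orthogonal to $\ell_u$, matching Definition \ref{width definitions}.
\item Use the symmetry $K=-K$ to obtain $h(-u)=h(u)$.
\item Conclude that $h$ is constant and identify $K$ with the ball of radius $\frac{w}{2}$.
\end{enumerate}

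An alternative for the last step avoids support-function uniqueness. Each $z\in M$ has $-z\in M$, and a supporting hyperplane at $z$ with normal $u$ corresponds to the parallel supporting hyperplane at $-z$. So the width in direction $u$ is $2\langle z,u\rangle$, giving $\langle z,u\rangle=\frac{w}{2}$. Hence every point of $M$ lies at distance at most $\frac{w}{2}$ from the origin, namely $|z|\ge\langle z,u\rangle=\frac{w}{2}$ combined with $|z|\le h(z/|z|)=\frac{w}{2}$. Therefore $|z|=\frac{w}{2}$ on all of $M$.

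The only point requiring care is justifying the identity $w_{\ell_u}(M)=h(u)+h(-u)$ and the uniqueness of a convex body given its support function. Both are standard facts of convex geometry, and I do not expect a genuine obstacle. In fact, the direct argument in the alternative above sidesteps uniqueness entirely by showing $|z|=\frac{w}{2}$ for all $z\in M$.
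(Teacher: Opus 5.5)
Your proof is correct and is essentially the paper's argument. The paper splits the support function as $F = F_e + F_o$, uses constant width to make $F_e$ constant and central symmetry about $p$ to make $F_o = \langle p,\cdot\rangle$, and then recognizes $F$ as the support function of a sphere of radius $\frac{R}{2}$ about $p$; your translation to the center of symmetry just makes $F_o = 0$ from the start. In your alternative argument, which only shows that $M$ lies in the sphere of radius $\frac{w}{2}$, add one line to get $M$ equal to that sphere: radial projection from the interior point $0$ maps $M$ bijectively onto the unit sphere.
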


\begin{proof} Let $F: S^{n} \to \real$ be the support function of $M$ and $F_e(x) = \frac{[F(x) + F(-x)]}{2}$ and $F_o(x) = \frac{[F(x) - F(-x)]}{2}$ its even and odd parts, as in Martinez-Maure (e.g., \cite[pp. 590--592]{MartinezMaure1997}). The fact that $M$ has constant width is equivalent to the statement that $F_e$ is constant, and the fact that $M$ is centrally symmetric about a point $p \in \real^{n+1}$ is equivalent to the statement that $F_o$ is the dot product with $p$. Letting $R$ be the width of $M$, $F(x)$ is therefore equal to $\frac{R}{2} + <\!p,x\!>$, the support function of the sphere of radius $\frac{R}{2}$ about $p$. \end{proof}

\begin{lemma}\label{isometric involution}
Let $X$ be a subset of $\real^{n+1}$ endowed with the induced length metric $d$ and $\alpha$ a function from $X$ to itself such that, for all $x \in X$, every $2$-plane through $x$ and $\alpha(x)$ intersects $X$ in two minimal geodesics of length $L$ from $x$ to $\alpha(x)$. Then, $\alpha$ is an involutive isometry. 
\end{lemma}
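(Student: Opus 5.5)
The plan is to first extract a single metric identity from the hypothesis, and then read off both the involution property and the isometry property from it by short algebraic manipulations.

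The key step, on which everything else rests, is the following. For all $x, z \in X$,
\begin{equation}\label{key identity isometric involution}
d(x,z) + d(z,\alpha(x)) = L .
\end{equation}
To see this, fix $x$ and $z$ and choose a $2$-plane $\Pi$ containing $x$, $\alpha(x)$, and $z$. If $z$ is collinear with $x$ and $\alpha(x)$, take any $2$-plane containing $\ell_{x\alpha(x)}$. By hypothesis, $X \cap \Pi$ is the union of two minimal geodesics of length $L$ from $x$ to $\alpha(x)$. In particular $d(x,\alpha(x)) = L$, and $z$ lies on one of these two minimal geodesics. A point on a minimal geodesic from $x$ to $\alpha(x)$ splits it into two subarcs, each of which is itself minimal. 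Hence $d(x,z) + d(z,\alpha(x)) = d(x,\alpha(x)) = L$. The degenerate cases $z = x$ and $z = \alpha(x)$ are immediate.

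\emph{Involution.} Let $y = \alpha(x)$. Apply \eqref{key identity isometric involution} with the roles of the points taken as $x$ and $z = \alpha(y)$:
\[
d(x,\alpha(y)) + d(\alpha(y),y) = L .
\]
Applying the hypothesis at the point $y$ gives $d(y,\alpha(y)) = L$. Substituting, we get $d(x,\alpha(y)) = 0$, so $\alpha(\alpha(x)) = x$.

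\emph{Isometry.} Take $z, w \in X$. Apply \eqref{key identity isometric involution} with base point $w$ and test point $z$:
\[
d(w,z) + d(z,\alpha(w)) = L .
\]
Apply it again with base point $z$ and test point $\alpha(w)$:
\[
d(z,\alpha(w)) + d(\alpha(w),\alpha(z)) = L .
\]
Subtracting the two equations gives $d(\alpha(z),\alpha(w)) = d(z,w)$. So $\alpha$ is distance-preserving, and being an involution it is bijective; hence it is an involutive isometry.

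The only delicate point I expect is justifying \eqref{key identity isometric involution}. One must check that every $z$ really lies in some admissible plane, and this is handled by choosing a plane through the line $\ell_{x\alpha(x)}$ in the collinear case. One must also check that subarcs of minimal geodesics in the induced length metric are minimal, which is the standard triangle-inequality argument. Continuity of $\alpha$ is not needed; it comes for free from the isometry property.
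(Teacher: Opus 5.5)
Your proof is correct and follows the paper's argument. The isometry step is exactly the paper's: slice by a plane through $x$, $y$, $\alpha(x)$ and by one through $y$, $\alpha(x)$, $\alpha(y)$, then subtract the two resulting identities. Your derivation of $d(x,\alpha(\alpha(x)))=0$ is an algebraic rephrasing of the paper's observation that the subarc from $\alpha(x)$ to $\alpha(\alpha(x))$ would be a proper part of a length-$L$ minimal geodesic yet itself have length $L$. Isolating the identity $d(x,z)+d(z,\alpha(x))=L$ and handling the collinear case explicitly are tidy improvements in presentation.
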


\begin{proof}Let $x \in X$, and slice $X$ by any $2$-plane containing $x, \alpha(x)$, and $\alpha(\alpha(x))$. If $\alpha(\alpha(x)) \neq x$, then $\alpha(\alpha(x))$ must lie on one of the two minimal geodesics of length $L$ between $x$ and $\alpha(x)$ in this slice; however, the proper subset of that geodesic connecting $\alpha(x)$ to $\alpha(\alpha(x))$ must also be a minimal geodesic of length $L$, a contradiction. Thus, $\alpha$ is an involution. For any $x,y \in X$, slice $X$ by two planes: one containing $x$, $y$, and $\alpha(x)$, and one containing $y$, $\alpha(x)$, and $\alpha(y)$. By the minimality of the geodesics in each slice, 
\begin{align*}
\displaystyle L = d(x,\alpha(x)) = d(x,y) + d(y,\alpha(x))
\end{align*}
and
\begin{align*}
\displaystyle L = d(y,\alpha(y)) = d(y, \alpha(x)) + d(\alpha(x), \alpha(y)).
\end{align*}
Therefore, $\displaystyle d(x,y) = L - d( y, \alpha(x)) = d(\alpha(x),\alpha(y))$, and $\alpha$ is an isometry. \end{proof}

\noindent Our next result implies that certain domains in convex surfaces satisfy the conclusion of the filling area conjecture. 

\begin{lemma}\label{filling lemma}
Let $M^2$ be a closed, convex surface in $\real^{3}$.  Let $\Pi$ be an affine $2$-plane in $\real^{3}$ that intersects $M$ in a closed curve $\gamma$, let $\mathcal{D}$ be a component of $M \setminus \gamma$, and suppose the inclusion $\gamma \hookrightarrow \overline{\mathcal{D}}$ is distance preserving, in that the distance between points $x,y \in \gamma$ in the length metric on $\overline{\mathcal{D}}$ is equal to the minimum length of the two components of $\gamma \setminus \lbrace x, y \rbrace$. Then, $\mathrm{Area}(\mathcal{D}) \geq \frac{L(\gamma)^{2}}{2\pi}$. 
\end{lemma}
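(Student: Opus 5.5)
The plan is to reduce the statement to Pu's systolic inequality for the real projective plane, via Gromov's observation relating fillings of a circle to metrics on $\mathbb{RP}^2$. Write $L = L(\gamma)$ and parametrize $\gamma$ by arclength. For each $x \in \gamma$, let $x^*$ be the point of $\gamma$ at arclength $L/2$ from $x$. Form the space $P = \overline{\mathcal{D}}/\!\sim$ by identifying each $x \in \gamma$ with $x^*$. Since $\overline{\mathcal{D}}$ is a closed disc, $P$ is topologically $\mathbb{RP}^2$. Give $P$ the quotient length metric induced from the length metric on $\overline{\mathcal{D}}$. Then $\mathrm{Area}(P) = \mathrm{Area}(\mathcal{D})$, because $\gamma$ has measure zero. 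It will then suffice to prove two things:
\[
\mathrm{sys}(P) \geq \tfrac{L}{2} \qquad\text{and}\qquad \mathrm{Area}(P) \geq \tfrac{2}{\pi}\, \mathrm{sys}(P)^2 .
\]
Together these give $\mathrm{Area}(\mathcal{D}) \geq \frac{2}{\pi}\cdot\frac{L^2}{4} = \frac{L(\gamma)^2}{2\pi}$, which is exactly the claimed constant. This is strong evidence that this route is the intended one.

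\textbf{Systole bound.} Let $c$ be a noncontractible loop in $P$. Lift it to $\overline{\mathcal{D}}$. The lift is a concatenation of arcs $c_1, \dots, c_k$ in $\overline{\mathcal{D}}$, where each $c_i$ ends at a point $y_i \in \gamma$ and $c_{i+1}$ begins at $y_i^*$. Noncontractibility means $k$ is odd. By hypothesis, each arc satisfies $L(c_i) \geq d_\gamma(y_{i-1}^*, y_i)$, where $d_\gamma$ is the minimum arclength distance along $\gamma$. The map $x \mapsto x^*$ is the rotation by half the circle, so it is an isometry of $(\gamma, d_\gamma)$. Composing the successive jumps therefore gives a chain in the circle $\gamma$ that, after an odd number of half-turns, returns to its start. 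Consequently the sum of the $d_\gamma$-steps is at least the distance from a point to its antipode, namely $L/2$. I will phrase this carefully by lifting to the universal cover $\mathbb{R} \to \gamma$: the odd number of half-turns forces a net displacement that is an odd multiple of $L/2$. This yields $L(c) \geq L/2$.

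\textbf{Pu's inequality and regularity.} Pu's inequality is classically stated for smooth Riemannian metrics. Here $P$ carries a nonsmooth metric of bounded curvature in the sense of Alexandrov, with a seam along the image of $\gamma$. I see two ways to handle this, and I will try the first one first.
\begin{enumerate}
\item Appeal to the uniformization of Alexandrov surfaces, due to Reshetnyak, to write the metric on $\mathcal{D}$ as conformal to a flat disc with a measurable conformal factor. Gluing then gives a conformal metric on $\mathbb{RP}^2$, and I apply Pu's proof directly: average over $SO(3)$ and use Cauchy--Schwarz on the round metric's family of closed geodesics. This argument only needs the conformal factor to be in $L^1$, and needs lengths of curves to be computable from it.
\item Alternatively, approximate $M$ by smooth strictly convex surfaces $M_j \to M$, cut by the same plane $\Pi$. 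The difficulty here is that the distance-preserving hypothesis is not stable under approximation. To get around this, I would instead replace the systole bound by a quantitative version. Namely, I would show $\mathrm{sys}(P_j) \geq L/2 - \varepsilon_j$ by using the convergence of intrinsic metrics of convex surfaces, which holds by Alexandrov's theorem.
\end{enumerate}

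I expect this regularity step to be the main obstacle: justifying Pu's inequality on the glued singular space $P$. The systole estimate is combinatorial and should be routine once the universal-cover bookkeeping is written down. Convexity of $M$ and planarity of $\gamma$ are used only for two purposes: to guarantee that $\overline{\mathcal{D}}$ is a disc with rectifiable boundary, and to guarantee that the intrinsic metric is an Alexandrov metric amenable to the approximation or uniformization above.
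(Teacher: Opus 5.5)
Your reduction is correct: glue $x\sim x^*$ along $\gamma$, prove $\mathrm{sys}(P)\ge L/2$ by the odd-number-of-half-turns bookkeeping, then apply Pu. It is exactly Gromov's proof of the filling inequality for Riemannian discs, which the paper simply cites when $\mathcal{D}$ is smooth. The gap is the non-smooth case, which is the only real content of the lemma, and neither of your regularity routes handles it as written.

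\textbf{Route (2).} The surfaces $M_j$ cut by the same plane have new boundary curves $\gamma_j$. An additive estimate $d_{\overline{\mathcal{D}_j}}(x,y)\ge d_{\gamma_j}(x,y)-\varepsilon_j$ does not give $\mathrm{sys}(P_j)\ge L/2-\varepsilon_j$. Your systole argument uses one such inequality per arc $c_i$, so a noncontractible loop crossing the seam $k$ times only gets $L(\gamma_j)/2-k\varepsilon_j$, and nothing in your argument bounds $k$. You would need a multiplicative estimate, which a generic smoothing near corners of $\gamma$ need not provide.

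\textbf{Route (1).} Uniformizing $\mathcal{D}$ to a flat disc and then identifying boundary points by the arclength half-turn does not produce a conformal structure on $\mathbb{RP}^2$. In the uniformizing coordinate the half-turn is just some homeomorphism of the circle, and the disc charts do not extend across the seam. You would instead have to show that $P$ itself is a surface of bounded integral curvature (via Alexandrov's gluing theorem, which needs bounded turn of $\gamma$ from the side of $\mathcal{D}$). Then you would uniformize $P$ directly and justify Pu's averaging argument for a $\delta$-subharmonic conformal factor. That may be possible, but it is a substantial import, not a routine check.

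The missing idea, which is how the paper proceeds, is to approximate while keeping the boundary curve $\gamma$ itself fixed, so that the hypothesis survives exactly and the smooth theorem applies with no error terms. The paper takes the outer parallel surface $M_\varepsilon$ and keeps its piece $\mathcal{D}_\varepsilon$ on the same side of $\Pi$ as $\mathcal{D}$. It adjoins the flat annulus in $\Pi$ between $\gamma$ and $M_\varepsilon\cap\Pi$, then smooths, obtaining a smooth disc $\mathcal{D}_\varepsilon^+$ whose boundary is exactly $\gamma$. Because the nearest-point retraction $\real^3\setminus\Omega\to M$ is $1$-Lipschitz, retracting a path in $\mathcal{D}_\varepsilon^+$ between points of $\gamma$ does not increase its length. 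This transfers the distance-preserving hypothesis from $\overline{\mathcal{D}}$ to $\mathcal{D}_\varepsilon^+$. Gromov's filling inequality (your gluing-plus-Pu argument, in the smooth setting) then gives $\mathrm{Area}(\mathcal{D}_\varepsilon^+)\ge L(\gamma)^2/(2\pi)$. One finishes by letting $\varepsilon\to 0$ with $\mathrm{Area}(\mathcal{D}_\varepsilon^+)\to\mathrm{Area}(\mathcal{D})$. This sidesteps both Pu's inequality on a singular $\mathbb{RP}^2$ and the error accumulation in your systole count.
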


\begin{proof} If $\mathcal{D}$ is smooth, then the result follows immediately from the filling inequality for Riemannian disks \cite{Gromov1983}. In general, let $M_{\varepsilon}$ be the outer parallel surface to $M$ at distance $\varepsilon$, let $\mathcal{D}_{\varepsilon}$ be the component of $M_{\varepsilon}$ that lies in the same component of $\real^{3} \setminus \Pi$ as $\mathcal{D}$, and let $\mathcal{D}_{\varepsilon}^{+}$ be a surface obtained by taking the union of $\mathcal{D}_{\varepsilon}$ with the annular region between $M$ and $M_{\varepsilon}$ in $\Pi$ and smoothing to obtain a smooth surface contained in the unbounded component of $\real^{3} \setminus M$ whose union with the domain bounded by $\gamma$ in $\Pi$ forms a closed, convex surface. The inclusion $\gamma \hookrightarrow \mathcal{D}_{\varepsilon}^{+}$ preserves the distance between $x,y \in \gamma$ because, for the bounded domain $\Omega$ enclosed by $M$, the nearest-point retraction $\real^{3} \setminus \Omega \to M$ is $1$-Lipschitz and, therefore, length non-increasing, and inclusion $\gamma \hookrightarrow \overline{\mathcal{D}}$ is distance preserving by assumption.  The filling inequality for disks then implies $\mathrm{Area}(\mathcal{D}_{\varepsilon}^{+}) \geq \frac{L(\gamma)^{2}}{2\pi}$. We can arrange for $\mathrm{Area}(\mathcal{D}_{\varepsilon}^{+})$ to converge to $\mathrm{Area}(\mathcal{D})$ as $\varepsilon \to 0$, for example by requiring $\mathcal{D}_{\varepsilon}^{+}$ to be within the $\epsilon$-neighborhood of $\mathcal{D}$, which implies the inequality for $\mathrm{Area}(\mathcal{D})$. \end{proof}
\noindent The next result is due to Borisenko and follows from the proof of \cite[Theorem 1.1\textquotesingle{}]{Borisenko2025}. Because it is slightly stronger than the result as stated in Borisenko's work, we explain in Remark \ref{borisenko remark} how to derive this conclusion from the proof of the result there.  

\begin{theorem}[Borisenko \cite{Borisenko2025}]\label{borisenko}
Let $M^n$ and $N^n$ be closed and convex hypersurfaces in $\real^{n+1}$, $n \geq 2$. If $\alpha : M \to N$ is an isometry, then $\alpha$ extends to a rigid motion of $\real^{n+1}$.
\end{theorem}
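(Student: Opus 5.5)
The plan is to reduce the statement to the claim that an intrinsic isometry of a closed, convex hypersurface preserves extrinsic distances. Suppose we know that $|\alpha(x) - \alpha(y)| = |x - y|$ for all $x,y \in M$. A distance-preserving map between subsets of $\real^{n+1}$ extends to a rigid motion of $\real^{n+1}$: it extends to the affine hull of $M$, which is all of $\real^{n+1}$ since $M$ is a closed hypersurface. So the whole task is to show that $\alpha$ preserves chord lengths, and I would extract this from the structure of Borisenko's proof of \cite[Theorem 1.1\textquotesingle{}]{Borisenko2025} rather than from its statement.

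First, I would apply the congruence statement of \cite{Borisenko2025} to fix a rigid motion $g$ with $g(M) = N$. Then $\beta = g^{-1} \circ \alpha$ is an intrinsic self-isometry of $M$, and it suffices to show that $\beta$ is the restriction of a rigid motion.

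Second, I would go through Borisenko's argument (as in Cohn-Vossen and Pogorelov for $n=2$) and check that it is \emph{equivariant}: it does not merely produce some congruence, but builds the rigid motion from the given isometry. Concretely, it shows that the extrinsic distance between $x$ and $y$ is determined by the intrinsic geometry of $M$ near the shortest paths joining them, in a way that is natural under isometries. For $n = 2$ this is exactly Pogorelov's formulation: an isometry of closed, convex surfaces is induced by a motion. For $n \geq 3$, I would check that Borisenko's reduction, through $2$-dimensional sections and nonnegative-curvature comparison, yields for each pair $x,y$ the identity $|\beta(x) - \beta(y)| = |x - y|$, applied directly to the map $\beta$.

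Third, I would argue by density and continuity. Establish the identity first for pairs $x,y$ in a dense set where the argument works cleanly, for instance smooth or strictly convex points, or points joined by a unique shortest path. Then extend to all pairs, using that $\beta$ is $1$-Lipschitz for the intrinsic metric and therefore continuous.

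I expect the main obstacle to be this equivariance check in degenerate situations: flat pieces of $M$, ridges, and regions where $M$ is a cylinder over a lower-dimensional convex set. There, intrinsic self-isometries could a priori be wild, for example a "sliding" or "folding" map of a flat facet. To handle this, I would first show that $\beta$ maps the singular set, with its intrinsic curvature measure, onto itself, because curvature measure is an intrinsic invariant. This should force $\beta$ to preserve the boundaries of flat regions. I would then use that an isometry of a flat convex region fixing its boundary setwise is affine, so it agrees with the restriction of a rigid motion, and glue these local motions together using the uniqueness of the global congruence.
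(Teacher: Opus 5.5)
Your reduction is the one the paper makes. If $|\alpha(y)-\alpha(x)| = |y-x|$ for all $x,y \in M$, then $\alpha$ extends to a rigid motion because the affine hull of $M$ is $\real^{n+1}$. That chord identity is to be extracted from the proof in \cite{Borisenko2025} rather than from its statement. The paper, however, says exactly where it comes from. Borisenko's argument passes through his Lemma 1.2, and the proof of that lemma (due to Sen'kin, \cite[Lemma 3]{Senkin1972}) already shows $|\alpha(y)-\alpha(x)| = |y-x|$ for the \emph{given} isometry and for \emph{all} pairs of points. So one only has to strengthen the stated conclusion of that lemma from ``$F_1$ and $F_2$ are congruent'' to ``the given isometry extends to a rigid motion.'' In your proposal this step, which carries all of the content, is left as something you ``would check.'' The detour through a self-isometry $\beta = g^{-1}\circ\alpha$ and the density argument are harmless but add nothing. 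Because you expect the check to fail in degenerate situations, you rely there on a separate argument, and that argument has a genuine gap.

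\textbf{Boundary preservation.} Intrinsic curvature measure vanishes on planar pieces and on cylindrical (or other developable) pieces alike, so it cannot force $\beta$ to preserve the boundaries of extrinsically flat regions. On a convex polyhedron in $\real^3$, the edges are invisible to the intrinsic metric, since only the vertices carry curvature. Showing that an intrinsic isometry carries faces to faces is essentially the Cauchy--Alexandrov rigidity theorem itself. Without $\beta(\partial F)=\partial F$, the image of a planar facet $F$ could a priori be intrinsically flat but extrinsically curved, and then no rigid motion agrees with $\beta$ on $F$.

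\textbf{Gluing.} This step is also unsupported. The motions $g$ with $g(M)=N$ form a coset of the symmetry group of $M$, so there is no uniqueness in general. In any case, the existence of one global motion carrying $M$ to $N$ says nothing about whether motions defined on different pieces of $M$, which need not preserve $M$, coincide.

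\textbf{The mechanism you describe.} You say the chord $|x-y|$ is determined by the intrinsic geometry near shortest paths from $x$ to $y$. This cannot be a local statement: for $n=2$, thin neighborhoods of geodesic arcs on positively curved surfaces admit isometric bendings that change chord lengths. The rigidity genuinely uses closedness.

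The missing ingredient is that the argument of \cite{Borisenko2025}, through Sen'kin's lemma, already yields chord-length preservation for the given isometry at every pair of points. No additional treatment of flat or singular regions is needed beyond what that argument already contains.
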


\begin{remark}\label{borisenko remark}
Although \cite[Theorem 1.1\textquotesingle{}]{Borisenko2025} states only that $M$ and $N$ are related by a rigid motion, the argument there establishes the result stated in Theorem \ref{borisenko}. Specifically, one may replace the first conclusion ``the hypersurfaces $F_1$ and $F_2$ are congruent'' in Lemma 1.2 of \cite{Borisenko2025} with ``the given isometry between $F_1$ and $F_2$ extends to a rigid motion.'' This is because the proof of the lemma, due to Sen'kin \cite[Lemma 3]{Senkin1972}, shows that $|\alpha(y) - \alpha(x)| = |y - x|$ for all $x,y \in M$.
\end{remark}

\begin{proof}[Proof of Theorem \ref{mean width}, Part I] Let $M^n$ be a closed, convex hypersurface in $\real^{n+1}$, $n \geq 2$, and $\alpha$ a map from $M$ to itself. If $\alpha$ has a fixed point, then the result is immediate, so we can suppose $\alpha$ is fixed-point free. Lemma \ref{chordal gauss map} implies the chordal Gauss map $\Psi_\alpha$ of $\alpha$ is surjective. Fix any orthogonal unit vectors $u$ and $v$ in $\real^{n+1}$. By surjectivity, there is $x \in M$ such that $\Psi_\alpha(x) = u$. Denote by $\Pi_{uv}$ the $2$-plane spanned by $u$ and $v$, and let $\Pi_{uv}^x$ be the affine $2$-plane parallel to $\Pi_{uv}$ through $x$. Then,
\begin{align}\label{mean width pf eqn 1}
\displaystyle \mu(\alpha) \leq d_M(x,\alpha(x)) \leq \frac{1}{2} L(\Pi_{uv}^x \cap M).
\end{align}	
Let $\phi_{\Pi_{uv}}$ denote orthogonal projection onto $\Pi_{uv}$. By Proposition \ref{crofton corollary},
\begin{align}\label{mean width pf eqn 2}
\displaystyle L(\Pi_{uv}^x \cap M) \leq L(\partial \phi_{\Pi_{uv}}(M)).
\end{align}
Therefore, by Theorem \ref{crofton's formula},
\begin{align}\label{mean width pf eqn 3}
\displaystyle \mu(\alpha) \leq \frac{\pi}{2} \int\limits_{Gr(1,\Pi_{uv})} w_\ell\,\overline{d\ell},
\end{align}
where $\overline{d\ell}$ is the unit-mass $SO(2)$ invariant measure on $Gr(1,\Pi_{uv})$. Note that the $SO(n+1)$-invariant measure $d\ell$ on $Gr(1,n+1)$ in Definition \ref{width definitions} is the push-forward of the $SO(n+1)$-invariant measure on the total space of the natural $Gr(1,\Pi_{uv})$-bundle over $Gr(2,n+1)$. Integrating the inequality in (\ref{mean width pf eqn 3}) over $Gr(2,n+1)$ shows that $\Xi(M) \geq \frac{2}{\pi} \mu(\alpha)$.
\smallskip 

Suppose equality holds. For all $x \in M$ and all unit vectors $v$ orthogonal to $u = \Psi_\alpha(x)$, equality in (\ref{mean width pf eqn 1}) and (\ref{mean width pf eqn 2}) implies that, for all such $x$, $u$, and $v$,
\begin{align}\label{mean width pf eqn 4}
\displaystyle\mu(\alpha) = d_M(x,\alpha(x)) = \frac{1}{2} L(\Pi_{uv}^x \cap M) = \frac{1}{2} L(\partial \phi_{\Pi_{uv}}(M)).
\end{align}
The inequality $\mu(\alpha) = \frac{1}{2} L(\Pi_{uv}^x \cap M)$ in (\ref{mean width pf eqn 4}) implies that all $2$-planes through $x$ and $\alpha(x)$ intersect $M$ in a pair of minimizing geodesics from $x$ to $\alpha(x)$.  If $M$ is smooth, the theorem then follows from the solution to the Blaschke conjecture \cite{Besse1978}. In general, the equality $\mu(\alpha) = \frac{1}{2} L(\partial \phi_{\Pi_{uv}}(M))$ in (\ref{mean width pf eqn 4}) implies by Lemma \ref{constant width} that $M$ has constant width $\frac{2}{\pi} \mu(\alpha)$. \end{proof}

\begin{proof}[First proof of Theorem \ref{mean width}, Part II] For $x \in M$, $u = \Psi_{\alpha}(x)$, and any unit vector $v$ orthogonal to $u$ as above, equality in (\ref{mean width pf eqn 2}) implies by Proposition \ref{crofton corollary} that, under the natural identification between the parallel $2$-planes $\Pi_{uv}$ and $\Pi_{uv}^{x}$, 
\begin{align}\label{mean width pf eqn 5}
\displaystyle \partial \phi_{\Pi_{uv}}(M) = \Pi_{uv}^x \cap M.
\end{align}	
Moreover, $\Pi_{uv}^x$ is the only affine $2$-plane parallel to $\Pi_{uv}$ for which the equality in (\ref{mean width pf eqn 5}) holds, as otherwise the domain $\Omega$ enclosed by $M$ would contain a $3$-dimensional cylinder with base $\Omega \cap \Pi_{uv}^x$, and its width along some lines passing through this cylinder would be greater than its width along lines contained in $\Pi_{uv}$, contradicting the fact that $M$ has constant width. Because this holds for all $\Pi_{u'v'}^y$ with $u' = \Psi_\alpha(y)$ and $v'$ orthogonal to $u'$, every $y \in M$ with $\Psi_\alpha(y) \in \Pi_{uv}$ must belong to $\Pi_{uv}^x \cap M$. For any unit vector $w$ in $\Pi_{uv}$, there is exactly one line $\ell_w$ in $\Pi_{uv}^x$ parallel to $w$ that divides $\Pi_{uv}^x \cap M$ into two arcs of equal length. The equality $d_M(x,\alpha(x)) = \frac{1}{2} L(\Pi_{uv}^x \cap M)$ in (\ref{mean width pf eqn 4}), as applied to each $y \in \Pi_{uv}^x \cap M$, implies that any $y \in M$ with $\Psi_\alpha(y) = w$ must lie on $\ell_w$ and, therefore, that $\alpha$ must act by exchanging the two points at which $\ell_w$ meets $M$.  In particular, $\alpha$ maps $\Pi_{uv}^x \cap M$ to itself.  
\smallskip 

Because $\alpha$ maps $\Pi_{uv}^x \cap M$ to itself, and because, for every $y \in M$, all affine $2$-planes through $y$ and $\alpha(y)$ intersect $M$ in a pair of minimizing geodesics from $y$ to $\alpha(y)$, for all unit vectors $z$ orthogonal to $\Pi_{uv}$, the inclusion map $\Pi_{uv}^x \cap M \hookrightarrow \Pi_{uvz}^x \cap M$ does not decrease distance between points in $\Pi_{uv}^x \cap M$ with respect to the length metric on $\Pi_{uvz}^x \cap M$, where $\Pi_{uvz}$ is the linear subspace spanned by $u,v,z$ and $\Pi_{uvz}^{x}$ is the affine $3$-dimensional subspace parallel to $\Pi_{uvz}$ through $x$. By Lemma \ref{filling lemma}, this implies that the area of $\Pi_{uvz}^x \cap M$ is at least $\frac{4}{\pi} \mu(\alpha)^2$. By Proposition \ref{crofton corollary}, this implies the same lower bound for the area of $\partial \phi_{\Pi_{uvz}}(M)$, where $\phi_{\Pi_{uvz}}$ denotes orthogonal projection onto $\Pi_{uvz}$.
\smallskip 

On the other hand, because $M$ has constant width $\frac{2}{\pi} \mu(\alpha)$, $\partial \phi_{\Pi_{uvz}}(M)$ does as well. Minkowski's inequality implies that its area is bounded above by $\frac{4}{\pi} \mu(\alpha)^2$. For all linear subspaces $\Pi_{uvz}$, the upper bound for the area of $\partial \phi_{\Pi_{uvz}}(M)$ established above implies equality in Minkowski's inequality, which implies that $\partial \phi_{\Pi_{uvz}}(M)$ is a round sphere. This implies $M$ is a round sphere. The nonuniqueness of minimizing geodesics between $x$ and $\alpha(x)$ implies that $\alpha$ is the antipodal map. \end{proof}

\begin{proof}[Second proof of Theorem \ref{mean width}, Part II] The second equality in (\ref{mean width pf eqn 4}) implies, by Lemma \ref{isometric involution}, that $\alpha$ is an involutive isometry of $M$ with respect to its intrinsic metric. Theorem \ref{borisenko} implies that $\alpha$ extends to an involutive isometry of $\real^{n+1}$.  By the classification of isometries of $\real^{n+1}$ and the fact that $\alpha$ is fixed-point free on $M$, $\alpha$ must be a reflection about a point. Because $M$ has constant width, Lemma \ref{centrally symmetric and constant width} implies that $M$ is a round sphere and $\alpha$ is the antipodal map. \end{proof}

\begin{remark}\label{final remark}
For odd $n$, the mean width of a smooth, convex hypersurface $M^n$ in $\real^{n+1}$ can be expressed in terms of the curvature tensor of the Riemannian metric of $M$: for $n = 1$, it is proportional to the length of $M$ by Theorem \ref{crofton's formula}, and for $n = 3$ it is proportional to the total scalar curvature of $M$. In general, it is equal to one of the Lipschitz--Killing curvatures of $M$, which give the coefficients in Weyl's tube formula, as discussed in \cite{Gray2004}. As written, Theorem \ref{mean width} cannot extend to a statement about this invariant for all Riemannian metrics on odd-dimensional spheres---the $3$-sphere admits metrics with negative scalar curvature, for example---but it seems natural to ask whether Theorem \ref{mean width} can be subsumed by a more general result of this type. For even $n$, the mean width of a convex hypersurface $M^n$ in $\real^{n+1}$ does not admit a representation in terms of the local intrinsic geometry of $M$, but in all dimensions it is proportional to the integral of the $(n-1)$-st elementary symmetric function of the principal curvatures of $M$ (cf. \cite{Schneider1993,Santalo2004,Gray2004}). Theorem \ref{mean width} therefore gives a sharp lower bound for several invariants of smooth convex hypersurfaces, including mean curvature.
\end{remark}

\begin{corollary}\label{mean curvature}
Let $M^n$ be a smooth, closed, convex hypersurface in $\real^{n+1}$, let $\alpha$ be a continuous map from $M$ into itself with minimal displacement $\mu(\alpha)$, as in Theorems \ref{main theorem} and \ref{mean width}, and let $\mathcal{H}$ be the mean curvature of $M$. Then, 
\begin{align*}
\displaystyle \displaystyle \int_{M} \mathcal{H}^{n-1} \ dx \geq P_{n} \mu(\alpha), 
\end{align*}
where $P_{n} = \frac{n^{n-1}\sigma_{n}}{\pi}$. For $n \geq 2$, equality holds if and only if $M$ is a round sphere and $\alpha$ is the antipodal map. 
\end{corollary}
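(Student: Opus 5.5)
The plan is to express the mean width $\Xi(M)$ as an integral of a curvature function over $M$, compare that curvature function pointwise with a power of $\mathcal{H}$, and then apply Theorem \ref{mean width}. I take $\mathcal{H}=\kappa_1+\cdots+\kappa_n$ to be the unnormalized mean curvature (the sum of the principal curvatures). This is the normalization for which the constant $P_n=\frac{n^{n-1}\sigma_n}{\pi}$ comes out right.

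First I will write the width in terms of the support function $F$ of $M$: $w_{\ell_u}(M)=F(u)+F(-u)$. Averaging over $S^n$ gives
\[
\Xi(M)=\frac{2}{\sigma_n}\int_{S^n}F(u)\,du .
\]
Because $M$ is smooth and convex, Minkowski's integral formula (equivalently, the mixed-volume identity $(n+1)V(\Omega,B,\dots,B)=\int_{S^n}F\,du$, as in \cite{Schneider1993}) expresses this integral in terms of the $(n-1)$-st elementary symmetric function $\mathfrak{s}_{n-1}$ of the principal curvatures:
\[
\int_{S^n}F\,du=\frac{1}{n}\int_M \mathfrak{s}_{n-1}(\kappa_1,\dots,\kappa_n)\,dx .
\]
This is the identity alluded to in Remark \ref{final remark}. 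It follows that $\int_M \mathfrak{s}_{n-1}\,dx=\frac{n\sigma_n}{2}\,\Xi(M)$.

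Second, all principal curvatures are nonnegative because $M$ is convex, so Maclaurin's inequality applies pointwise:
\[
\frac{\mathfrak{s}_{n-1}}{n}\le\Big(\frac{\mathcal{H}}{n}\Big)^{n-1},\qquad\text{i.e.}\qquad \mathcal{H}^{n-1}\ge n^{n-2}\,\mathfrak{s}_{n-1}.
\]
Integrating this over $M$ and then using Theorem \ref{mean width} gives
\[
\int_M\mathcal{H}^{n-1}\,dx\ \ge\ n^{n-2}\cdot\frac{n\sigma_n}{2}\,\Xi(M)\ \ge\ \frac{n^{n-1}\sigma_n}{2}\cdot\frac{2}{\pi}\,\mu(\alpha)\ =\ P_n\,\mu(\alpha).
\]

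For the equality case with $n\ge 2$, equality in the chain forces equality in $\Xi(M)\ge\frac{2}{\pi}\mu(\alpha)$. The equality statement of Theorem \ref{mean width} then says $M$ is a round sphere and $\alpha$ is the antipodal map, so the Maclaurin step needs no separate analysis. Conversely, on a sphere of radius $r$ with the antipodal map, $\mu(\alpha)=\pi r$ and $\mathcal{H}\equiv n/r$. A direct check then gives
\[
\int_M\mathcal{H}^{n-1}\,dx=\frac{n^{n-1}}{r^{n-1}}\,\sigma_n r^n=n^{n-1}\sigma_n r=P_n\,\mu(\alpha),
\]
so equality holds there.

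The main obstacle is bookkeeping rather than ideas. One has to fix the normalization of $\mathcal{H}$ and the exact constant in the Minkowski formula so that they combine to $P_n$. A check on the round sphere, which is done above, confirms that the constants match.
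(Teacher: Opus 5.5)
Your proposal is correct and follows the paper's proof. The paper likewise combines the identity $\Xi(M)=\frac{2}{n\sigma_n}\int_M \eta_{n-1}(k_1,\dots,k_n)\,dx$ with Maclaurin's inequality and Theorem \ref{mean width}, taking $\mathcal{H}$ to be the sum of the principal curvatures. The only difference is that the paper first records $\frac{\sigma_n}{\pi}\binom{n}{m}^{\frac{n-1}{m}}\mu(\alpha)\le\int_M\eta_m^{\frac{n-1}{m}}\,dx$ for all $1\le m\le n-1$ and then specializes to $m=1$, whereas you go straight to $m=1$ and write out the equality case explicitly.
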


\begin{proof} Let $\eta_{m}$ denote the $m$-th elementary symmetric function; that is, for $d \geq m$, 
\begin{align*}
\displaystyle \eta_{m}(x_{1}, \dots, x_{d}) = \sum\limits_{1 \leq i_{1}< \cdots < i_{m} \leq d} x_{i_{1}}x_{i_{2}}\cdots x_{i_{m}}. 
\end{align*}
Denote by $0 \leq k_{1} \leq k_{2} \leq \dots \leq k_{n}$ the principal curvatures of $M$. The identity for $\Xi(M)$ alluded to above states that 
\begin{align}\label{mean curv cor eqn}
\displaystyle \Xi(M) = \frac{2}{n\sigma_{n}} \int_{M} \eta_{n-1}(k_{1}, \cdots, k_{n}) \ dx. 
\end{align}
Together, Theorem \ref{mean width}, (\ref{mean curv cor eqn}), and Maclaurin's inequalities for symmetric functions imply that, for $1 \leq m \leq n-1$, 
\begin{align*}
\displaystyle \frac{\sigma_n}{\pi} \binom{n}{m}^{\frac{n-1}{m}} \mu(\alpha) \leq \int_{M} \eta_{m}(k_{1}, \cdots, k_{n})^{\frac{n-1}{m}} \ dx.
\end{align*}
The case $m = 1$ gives the result. \end{proof}
\noindent For $n \geq 3$, the case $m=2$ in the proof of Corollary \ref{mean curvature} also gives a lower bound for the scalar curvature of smooth convex hypersurfaces. 


\appendix


\section{Asymptotic behavior of the constants in Theorem \ref{main theorem}}
\label{asymptotics_appendix}


The aim of this appendix is to describe the asymptotic behavior of our estimate for the constants $h_n$ in Theorem \ref{main theorem}. Let $\Gamma(z)$ denote the gamma function $\int_0^\infty t^{z-1} e^{-t} dt$. The volume of the unit $n$-ball and area of the unit $n$-sphere are $\omega_n = \frac{\pi^{\frac{n}{2}}}{\Gamma ( \frac{n}{2} + 1 )}$ and $\sigma_n = \frac{2\pi^{\frac{n+1}{2}}}{\Gamma (\frac{n+1}{2})}$, respectively, and are related by the identity $\sigma_{n+1} = 2\pi \omega_n$. The following inequality for the volumes of balls, which appears in the proof of Proposition \ref{intrinsic-extrinsic proposition 1}, can also be derived from work of Alzer and Klain--Rota.

\begin{lemma}[cf. {\cite[Theorem 3.6]{Alzer2008}}; {\cite[Proposition 3.2]{KlainRota1997}}]\label{beta bound}
For all natural numbers $n$, $2\omega_{n-1} \leq n\omega_n$.
\end{lemma}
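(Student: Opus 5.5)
We need $2\omega_{n-1} \le n\omega_n$ for every natural number $n$. Since $\omega_n = \frac{\pi^{n/2}}{\Gamma(\frac n2+1)}$, the plan is to reduce the claim to a statement about ratios of gamma functions, or to prove it directly from a geometric integral formula; we use the latter as the main route. The key identity is the slicing formula
\[
\omega_n = \omega_{n-1}\int_{-1}^{1}(1-t^2)^{\frac{n-1}{2}}\,dt ,
\]
obtained by writing the unit $n$-ball as a union of $(n-1)$-balls of radius $\sqrt{1-t^2}$. Dividing, the claimed inequality becomes
\[
\int_{-1}^{1}(1-t^2)^{\frac{n-1}{2}}\,dt \;\ge\; \frac{2}{n}.
\]

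For this one-variable inequality, we use $\sqrt{1-t^2}\ge 1-|t|$ on $[-1,1]$, which holds because $1-t^2=(1-|t|)(1+|t|)\ge(1-|t|)^2$. Hence $(1-t^2)^{\frac{n-1}{2}}\ge (1-|t|)^{n-1}$, and
\[
\int_{-1}^{1}(1-|t|)^{n-1}\,dt = \frac{2}{n}.
\]
Combining these gives the claim, with equality only when $n=1$. Geometrically, this says the ball contains the double cone over its equatorial $(n-1)$-ball, whose volume is $\frac{2\omega_{n-1}}{n}$. That observation could serve as a one-line alternative proof: the cone with apex at a pole over the equatorial disk lies in the ball.

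The case $n=1$ needs the convention $\omega_0=1$. Then $2\omega_0=2=1\cdot\omega_1$, so equality holds; the integral argument covers it anyway, since for $n=1$ both integrals equal $2$. We do not expect a real obstacle. The only points to check carefully are the slicing formula for the ball volume, which is Cavalieri's principle, and the elementary pointwise bound, which requires exponent $\frac{n-1}{2}\ge 0$ and so holds for all natural $n$.
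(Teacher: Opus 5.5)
Your proof is correct, but it takes a different route from the paper's.

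The paper works with the gamma function. It uses $\omega_n = \pi^{n/2}/\Gamma(\frac n2+1)$ to rewrite the claim as $B(\frac{n+1}{2},\frac12)\ge \frac2n$. It then converts this to the Wallis-integral bound $I_n=\int_0^{\pi/2}\sin^n\theta\,d\theta\ge\frac1n$, and proves that by induction from $I_1=1$, $I_2=\frac{\pi}{4}$ using the reduction $I_{n+2}=\frac{n+1}{n+2}I_n$.

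Your Cavalieri integral is actually the same quantity. The substitution $t=\cos\theta$ gives $\int_{-1}^{1}(1-t^2)^{\frac{n-1}{2}}\,dt=2I_n$, so the two arguments differ only in how they bound this integral from below. You use the pointwise comparison $(1-t^2)^{\frac{n-1}{2}}\ge(1-|t|)^{n-1}$. That is non-inductive, needs no gamma- or beta-function identities, and shows directly that the inequality is strict for $n\ge 2$. The paper's formulation in terms of $\Gamma$, in turn, fits the gamma-function asymptotics developed in the same appendix.

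Your geometric one-liner (the ball contains the double cone over its equatorial $(n-1)$-ball) is worth noting in context. It is exactly Lemma~\ref{chakerian}, with $n-1$ in place of $n$, applied to the unit $n$-ball, taking $\lambda$ to be a diameter orthogonal to the equatorial hyperplane. So within the paper, the inequality could also have been read off from a result it already quotes.
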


\begin{proof} The desired inequality is equivalent to $2\Gamma(\frac{n+2}{2}) \leq n\sqrt{\pi}\Gamma(\frac{n+1}{2})$, which, because $\Gamma(\frac{1}{2}) = \sqrt{\pi}$, is equivalent to the inequality $B(\frac{n+1}{2},\frac{1}{2}) \geq \frac{2}{n}$ for the beta function $B(z_1,z_2) = \frac{\Gamma(z_1)\Gamma(z_2)}{\Gamma(z_1 + z_2)}$. By the identity $B(z_1,z_2) = 2\int_0^{\pi/2} (\sin \theta)^{2z_1 - 1} (\cos \theta)^{2z_2 - 1} d\theta$, this is equivalent to $\int_0^{\pi/2} \sin^n \theta \,d\theta \geq \frac{1}{n}$. Letting $I_n$ equal that integral, a classical reduction formula due to Wallis states that $I_{n+2} = \big( \frac{n+1}{n+2} \big) I_n$. Since $I_1 = 1$ and $I_2 = \frac{\pi}{4}$, the result follows by induction. \end{proof}

Let $\mathcal{I}_n$ be the function in (\ref{explicit formula for I}), and let $\mathcal{J}_n$ be as in Corollary \ref{intrinsic-extrinsic proposition 2}. Our estimate for the constant $h_n$ in Theorem \ref{main theorem} is the common value of $\mathcal{I}_n$ and $\mathcal{J}_n$ at the unique $\rho_n^* > 1$ for which $\mathcal{I}_n(\rho_n^*) = \mathcal{J}_n(\rho_n^*)$. Letting $\rho_n$ be as in (\ref{rho_n_formula}), we must determine whether $\rho_n^* \leq \rho_n$ or $\rho_n^* > \rho_n$ in order to solve for $\rho_n^*$. 

\begin{lemma}\label{intersection piece}
For each natural number $n$, let $\mathcal{I}_n$,  $\mathcal{J}_n$, and $\rho_n$ be as above, and let $\rho_n^*$ be the unique $\rho > 1$ with $\mathcal{I}_n(\rho_n^*) = \mathcal{J}_n(\rho_n^*)$. Let $A_n = (2^{n-1} \pi n)^{\frac{1}{n}} \big( \frac{K_{n+1}}{\omega_{n+1}} \big)^{\frac{1}{n+1}}$, with $K_{n+1} = \frac{2}{\sqrt{3}(n+1)!}$ as in Proposition \ref{enclosed volume bound}, and let $B_n = \frac{1}{(n-1) \frac{\omega_{n-1}}{\omega_{n-2}} - 1}$. Then, $\rho_n^* > \rho_n$ if and only if $A_n > B_n$.
\end{lemma}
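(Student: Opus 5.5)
Since $\mathcal{I}_n$ is increasing and $\mathcal{J}_n$ is strictly decreasing on $(1,\infty)$, the difference $\mathcal{I}_n - \mathcal{J}_n$ is strictly increasing. It is negative near $1^+$ and positive for large $\rho$, which is why $\rho_n^*$ is unique. So $\rho_n^* > \rho_n$ holds if and only if $\mathcal{I}_n(\rho_n) < \mathcal{J}_n(\rho_n)$. The plan is to compute both sides at $\rho_n$ explicitly, using the first branch of \eqref{explicit formula for I} (which is valid at $\rho = \rho_n$, the two branches agreeing there), and then rearrange the resulting inequality into $A_n > B_n$.

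\textbf{Step 1: simplify the quantities at $\rho_n$.} From \eqref{rho_n_formula},
\[
\frac{\rho_n - 1}{\rho_n} = \frac{\omega_{n-2}}{(n-1)\omega_{n-1}}, \qquad \rho_n - 1 = \frac{\omega_{n-2}}{(n-1)\omega_{n-1} - \omega_{n-2}} = B_n .
\]
Thus $\rho_n = 1 + B_n$ and $\frac{\rho_n-1}{\rho_n} = \frac{B_n}{1+B_n}$.

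\textbf{Step 2: write out the comparison.} We have $\mathcal{I}_n(\rho_n) = \frac{\omega_{n-1}}{n2^{n-2}}\big(\frac{B_n}{1+B_n}\big)^n$ and $\mathcal{J}_n(\rho_n) = \sigma_n\big(\frac{K_{n+1}}{\omega_{n+1}}\big)^{\frac{n}{n+1}}(1+B_n)^{-n}$. The factors $(1+B_n)^{-n}$ cancel, so the inequality $\mathcal{I}_n(\rho_n) < \mathcal{J}_n(\rho_n)$ becomes
\[
B_n^n < \frac{n 2^{n-2}\sigma_n}{\omega_{n-1}}\Big(\frac{K_{n+1}}{\omega_{n+1}}\Big)^{\frac{n}{n+1}}.
\]
Applying the identity $\sigma_n = 2\pi\omega_{n-1}$ (which is $\sigma_{n+1} = 2\pi\omega_n$ with the index shifted) gives
\[
\frac{n2^{n-2}\sigma_n}{\omega_{n-1}} = 2^{n-1}\pi n .
\]
The right-hand side is then exactly $A_n^n$. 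Since both sides are positive, taking $n$-th roots gives $B_n < A_n$.

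\textbf{Main obstacle.} The only delicate points are bookkeeping, in two places. First, $B_n > 0$ requires $(n-1)\omega_{n-1} > \omega_{n-2}$, which is needed for $\rho_n$ to be finite and greater than $1$. I would derive it from Lemma \ref{beta bound}, which gives $(n-1)\omega_{n-1} \geq 2\omega_{n-2} > \omega_{n-2}$. Small $n$, where $\omega_{n-2}$ may be undefined, would need to be handled by convention or excluded. Second, the strict monotonicity claim requires care at the boundary point $\rho_n$: if $\rho_n^* = \rho_n$, then equality holds in both comparisons, which is consistent with the claimed equivalence.
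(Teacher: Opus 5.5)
Your proof is correct and is essentially the paper's argument. Both use the first branch of \eqref{explicit formula for I}, the identity $\sigma_n = 2\pi\omega_{n-1}$ (which turns the comparison of $\mathcal{I}_n(\rho)$ with $\mathcal{J}_n(\rho)$ into a comparison of $(\rho-1)^n$ with $A_n^n$), and $\rho_n = 1 + B_n$. The paper evaluates at $\rho_n^*$, deriving $\rho_n^* = 1 + A_n$ when $\rho_n^* \le \rho_n$ and using uniqueness of $\rho_n^*$ for the converse, so it proves the contrapositive; you instead evaluate the sign of $\mathcal{I}_n - \mathcal{J}_n$ at $\rho_n$ and use strict monotonicity, which gives the stated equivalence directly.
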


\begin{proof} If $\rho_n^* \leq \rho_n$, then $\mathcal{I}_n(\rho_n^*) = \frac{\omega_{n-1}}{n 2^{n-2}} \big( \frac{\rho_n^* - 1}{\rho_n^*} \big)^n$, which implies $\rho_n^*$ satisfies the following identity: 
\begin{align}\label{intersection piece pf eqn 2}
\displaystyle \sigma_n \Big( \frac{K_{n+1}}{\omega_{n+1}} \Big)^{\frac{n}{n+1}} = \frac{\omega_{n-1}}{n 2^{n-2}} (\rho_n^* - 1)^n .
\end{align}	
By the identity $\frac{\sigma_n}{\omega_{n-1}} = 2\pi$, this is equivalent to $\rho_n^* = 1 + A_n$. We also have that
\begin{align*}
\displaystyle \rho_n = 1 + \frac{\omega_{n-2}}{(n-1)\omega_{n-1} - \omega_{n-2}} = 1 + B_n .
\end{align*}
Together, these identities imply that $A_n \leq B_n$. Conversely, the identity in \eqref{intersection piece pf eqn 2} is valid if one sets $\rho_n^* = 1 + A_n$. If $A_n \leq B_n$, then multiplying \eqref{intersection piece pf eqn 2} by $\big( \frac{1}{\rho_n^*} \big)^{n}$ gives $\mathcal{I}_n(\rho_n^*) = \mathcal{J}_n(\rho_n^*)$. \end{proof}

\noindent We have verified computationally that $A_n > B_n$, and therefore that $\rho_n^* > \rho_n$, for $n \leq 100,000$. We will show that this is the case for all sufficiently large $n$ in Lemma \ref{further asymptotics} below. In several calcuations, we use the following generalization of Stirling's approximation for the gamma function due to Binet.  


\begin{theorem}[\cite{Binet1839}]\label{binet_theorem}
For $z > 0$ and any natural number $N$, 
\begin{equation*}
\ln \Gamma(z+1) = z \ln z - z + \frac{1}{2} \ln z + \frac{1}{2} \ln(2\pi) + \sum_{k = 1}^N \frac{\beta_{2k}}{2k(2k-1)z^{2k-1}} + \epsilon(N,z), 
\end{equation*}
where $\beta_m$ is the $m$-th Bernoulli number and, for fixed $N$, the error term $\epsilon(N,z)$ is $O \big( \frac{1}{z^{2N + 1}} \big)$ as $z \to \infty$. In particular, $\Gamma(z + 1) = \sqrt{2\pi z} \big( \frac{z}{e} \big)^z \big[ 1 + O \big( \frac{1}{z} \big) \big]$. 
\end{theorem}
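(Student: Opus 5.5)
The statement is the classical Stirling series for $\ln\Gamma$, so I would prove it from Binet's first integral representation followed by a Watson-lemma expansion. The starting point is, for $z>0$,
\[
\ln \Gamma(z) = \Big(z - \tfrac{1}{2}\Big)\ln z - z + \tfrac{1}{2}\ln(2\pi) + \int_0^\infty f(t)\, e^{-zt}\,dt, \qquad f(t) = \frac{1}{t}\Big(\frac{1}{2} - \frac{1}{t} + \frac{1}{e^t - 1}\Big).
\]
Adding $\ln z$, via $\Gamma(z+1) = z\Gamma(z)$, turns $(z-\frac{1}{2})\ln z$ into $z\ln z + \frac{1}{2}\ln z$. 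This is exactly the main part of the claimed expansion, so everything reduces to expanding the integral.

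\textbf{Deriving Binet's formula.} I would work at the level of the digamma function $\psi = \Gamma'/\Gamma$.
\begin{itemize}
\item Start from Gauss's integral $\psi(z) = \int_0^\infty \big(\frac{e^{-t}}{t} - \frac{e^{-zt}}{1-e^{-t}}\big)\,dt$, which follows from the Weierstrass product.
\item Use Frullani's integral $\ln z = \int_0^\infty \frac{e^{-t} - e^{-zt}}{t}\,dt$ and $\frac{1}{2z} = \frac{1}{2}\int_0^\infty e^{-zt}\,dt$. Together these give
\[
\psi(z) = \ln z - \frac{1}{2z} - \int_0^\infty \Big(\frac{1}{e^t-1} - \frac{1}{t} + \frac{1}{2}\Big) e^{-zt}\,dt.
\]
\item Integrate in $z$. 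Differentiation under the integral sign is justified because $\frac{1}{e^t-1} - \frac{1}{t} + \frac{1}{2}$ is bounded on $(0,\infty)$ and is $O(t)$ at $0$. This yields Binet's formula up to an additive constant $C$.
\item Identify $C$. The integral term tends to $0$ as $z\to\infty$ by dominated convergence. Hence $C = \lim_{z\to\infty}\big[\ln\Gamma(z) - (z-\tfrac12)\ln z + z\big]$, and Stirling's formula gives $C = \tfrac{1}{2}\ln(2\pi)$. If one wants Stirling's formula itself, one can instead obtain $C$ from Wallis's product through the duplication formula.
\end{itemize}

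\textbf{Expanding the integral.} The Bernoulli generating function $\frac{t}{e^t-1} = \sum_m \beta_m t^m/m!$, with $\beta_1 = -\frac{1}{2}$, gives near $0$ the expansion
\[
f(t) = \sum_{k\ge 1} \frac{\beta_{2k}}{(2k)!}\, t^{2k-2}.
\]
Let $R_N(t) = f(t) - \sum_{k=1}^N \frac{\beta_{2k}}{(2k)!} t^{2k-2}$. Then $R_N(t) = O(t^{2N})$ near $0$ by analyticity. For large $t$, $f$ is bounded and the subtracted polynomial has degree $2N-2$. Together these give $|R_N(t)| \le C_N t^{2N}$ on all of $(0,\infty)$. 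Integrating term by term with $\int_0^\infty t^{2k-2} e^{-zt}\,dt = \frac{(2k-2)!}{z^{2k-1}}$ produces exactly $\frac{\beta_{2k}}{2k(2k-1)z^{2k-1}}$. The remainder $\epsilon(N,z) = \int_0^\infty R_N(t) e^{-zt}\,dt$ satisfies $|\epsilon(N,z)| \le C_N (2N)!/z^{2N+1}$, as required.

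\textbf{The ``in particular''.} This follows by taking $N = 1$ (or noting directly that the integral is $O(1/z)$) and exponentiating, since $e^{O(1/z)} = 1 + O(1/z)$.

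\textbf{Main obstacle.} I expect the only delicate step to be the rigorous derivation of Binet's formula, in particular fixing the constant $\frac{1}{2}\ln(2\pi)$ without circularity. I would handle this with Wallis's product, as indicated above. The asymptotic expansion itself is routine.
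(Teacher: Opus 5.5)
The paper gives no proof of this statement. It quotes the result from Binet's memoir and uses it as a black box for the asymptotics in Appendix~\ref{asymptotics_appendix}. Your proposal supplies a self-contained proof along the standard route: Binet's first integral formula, then termwise integration of the Bernoulli expansion of $f$ (Watson's lemma). It is correct.

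\textbf{The derivation.} The digamma identity checks out via $\frac{1}{1-e^{-t}} = 1 + \frac{1}{e^t-1}$. The expansion $f(t) = \sum_{k\ge 1}\frac{\beta_{2k}}{(2k)!}t^{2k-2}$ holds for $|t|<2\pi$, since the odd Bernoulli numbers beyond $\beta_1$ vanish. Your global bound $|R_N(t)|\le C_N t^{2N}$ gives $|\epsilon(N,z)|\le C_N(2N)!\,z^{-2N-1}$ for every $z>0$, which is slightly more than the stated asymptotic.

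\textbf{The constant.} You are right that quoting Stirling's formula for real $z$ to fix the constant would be circular, given the ``in particular'' clause. Two non-circular options work. Since the limit defining $C$ is now known to exist, you can evaluate it along integers, where $\Gamma(n)=(n-1)!$ and the Wallis-product proof applies. Alternatively, insert $\ln\Gamma(z) = (z-\frac12)\ln z - z + C + o(1)$ into the duplication formula $\Gamma(z)\Gamma(z+\frac12) = 2^{1-2z}\sqrt{\pi}\,\Gamma(2z)$, which is proved via the Beta function; this forces $2C = C + \frac12\ln(2\pi)$.

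\textbf{What each approach buys.} The citation buys the paper brevity. Your argument also makes clear that every use of the theorem in the appendix needs only the constant $\frac12\ln(2\pi)$ plus an $O(1/z)$ correction. That much follows from the boundedness of $f$ alone, via $\bigl|\int_0^\infty f(t)e^{-zt}\,dt\bigr|\le \|f\|_\infty/z$.
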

\noindent Theorem \ref{binet_theorem} leads to the following asymptotic estimate for the volume of the unit $n$-ball.

\begin{lemma}\label{ball asympototics}
For each fixed integer $m$, as $n \to \infty$, 
\begin{equation*}
\displaystyle \ln \omega_{n+m} = \frac{1}{2} \Big[ - n \ln n + \ln(2\pi e) n - (m+1) \ln n + m \ln (2\pi)  - \ln \pi  + O \Big( \frac{1}{n} \Big) \Big].
\end{equation*}
\end{lemma}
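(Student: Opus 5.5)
We start from the closed formula $\omega_{n+m} = \pi^{\frac{n+m}{2}}/\Gamma\big(\frac{n+m}{2}+1\big)$, so that
\[
\ln \omega_{n+m} = \frac{n+m}{2}\ln \pi - \ln \Gamma\Big(\frac{n+m}{2}+1\Big),
\]
and we apply Theorem \ref{binet_theorem} with $z = \frac{n+m}{2}$. Only the Stirling form is needed, since an error of $O(\frac{1}{z})$ is the same as $O(\frac{1}{n})$. Binet's expansion gives
\[
\ln\Gamma(z+1) = z\ln z - z + \tfrac12\ln z + \tfrac12\ln(2\pi) + O(\tfrac1z).
\]
The whole task is then to expand this in powers of $n$ with $m$ fixed and collect terms.

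The key step is to write $\ln z = \ln n - \ln 2 + \ln(1+\frac{m}{n})$, with $\ln(1+\frac mn) = \frac mn + O(\frac{1}{n^2})$. Then
\[
z\ln z = \frac{n+m}{2}\big(\ln n - \ln 2\big) + \frac{n+m}{2}\Big(\frac mn + O(n^{-2})\Big) = \frac{n+m}{2}(\ln n - \ln 2) + \frac m2 + O(\tfrac1n).
\]
Here it matters that the product $\frac{n+m}{2}\cdot\frac{m}{n}$ contributes the constant $\frac m2$. It cancels against the $\frac m2$ coming from $-z = -\frac{n}{2}-\frac{m}{2}$, leaving $-\frac n2$. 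We also have $\frac12 \ln z = \frac12 \ln n - \frac12\ln 2 + O(\frac1n)$.

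Collecting everything inside $\frac12[\cdots]$:
\begin{itemize}
\item The $n\ln n$ terms give $-n\ln n$.
\item The linear terms give $n(\ln\pi + \ln 2 + 1) = \ln(2\pi e)\,n$.
\item The $\ln n$ terms give $-m\ln n - \ln n = -(m+1)\ln n$.
\item The constants give $m\ln\pi + m\ln 2 + \ln 2 - \ln 2 - \ln(2\pi) = m\ln(2\pi) - \ln\pi$.
\end{itemize}
This matches the stated formula.

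The main obstacle is just bookkeeping. One must keep the $O(1)$ cross term from $z\ln(1+\frac mn)$, and handle the $\ln 2$ contributions from $\ln z$ and from $\frac12\ln(2\pi)$ correctly. We will double-check the result against $m=0$, where it reduces to the standard $\omega_n \sim (2\pi e/n)^{n/2}/\sqrt{n\pi}$.
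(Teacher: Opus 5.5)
Your argument is correct and is essentially the paper's proof: both apply Theorem \ref{binet_theorem} at $z=\frac{n+m}{2}$ and then expand $\ln(n+m)=\ln n+\frac{m}{n}+O(\frac{1}{n^{2}})$, keeping the $O(1)$ cross term that cancels against $-z$. One small slip is in your constant line as written: $m\ln\pi+m\ln 2+\ln 2-\ln 2-\ln(2\pi)$ counts $\ln 2$ twice and equals $m\ln(2\pi)-\ln(2\pi)$; the correct tally is $m\ln\pi+m\ln 2+\ln 2-\ln(2\pi)=m\ln(2\pi)-\ln\pi$, which does give the stated formula.
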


\begin{proof} For a fixed integer $m$ and $n > |m|$, a calculation via the Maclaurin series $-\sum_{k=1}^\infty \frac{x^k}{k}$ for $\ln(1 - x)$ for $|x| < 1$ establishes that
\begin{equation}\label{logarithm shift}
\ln(n + m) = \ln n + \sum_{k = 1}^\infty (-1)^{k - 1} \frac{m^k}{k n^k} = \ln n + \frac{m}{n} + O \Big( \frac{1}{n^2} \Big) .
\end{equation}
A similar calculation via geometric series establishes that  
\begin{equation}\label{reciprocal shift}
\frac{1}{n + m} = \sum_{k = 1}^\infty (-1)^{k-1} \frac{m^{k-1}}{n^k} = \frac{1}{n} - \frac{m}{n^2} + O \Big( \frac{1}{n^3} \Big).
\end{equation}
From the formula $\omega_{n+m} = \frac{\pi^{\frac{n+m}{2}}}{\Gamma \big( \frac{n+m}{2} + 1 \big)}$ and Theorem \ref{binet_theorem}, one has that
\begin{equation}\label{ball eqn}
\displaystyle \ln \omega_{n+m} = \frac{1}{2} \big[ -(n+m) \ln (n+m) + \ln(2\pi e) (n+m) - \ln (n+m) - \ln \pi + O \Big( \frac{1}{n} \Big) \Big].
\end{equation}
Together, \eqref{logarithm shift}, \eqref{reciprocal shift}, and \eqref{ball eqn} imply the result. \end{proof}

\begin{lemma}\label{asymptotics_prop_1}\label{further asymptotics}
For $A_n$ and $B_n$ as in Lemma \ref{intersection piece}, $\lim\limits_{n \to \infty} \frac{A_n}{B_n} = 2\sqrt{e}$.  
\end{lemma}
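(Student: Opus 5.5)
The plan is to find the first-order asymptotics of $A_n$ and $B_n$ separately and show that both decay like a constant times $n^{-1/2}$. The claimed limit is then the ratio of the two constants. Specifically, I will aim to show
\[
A_n = \frac{2\sqrt{e}}{\sqrt{2\pi n}}\big(1+o(1)\big), \qquad B_n = \frac{1}{\sqrt{2\pi n}}\big(1+o(1)\big),
\]
which gives $\frac{A_n}{B_n} \to 2\sqrt{e}$. Only leading-order terms matter, so I will work with logarithms and track errors only up to $o(1)$ after dividing by the relevant power of $n$.

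For $A_n$, I will factor it into three pieces.

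\textbf{First piece.} Since $\frac{1}{n}\ln(\pi n) \to 0$, we have $(2^{n-1}\pi n)^{\frac1n} = 2\exp\big(\tfrac{\ln(\pi n) - \ln 2}{n}\big) \to 2$.

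\textbf{Second piece.} The constant $\big(\frac{2}{\sqrt3}\big)^{\frac{1}{n+1}}$ tends to $1$.

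\textbf{Third piece.} The remaining piece is $\big((n+1)!\,\omega_{n+1}\big)^{-\frac{1}{n+1}}$.
\begin{itemize}
\item By Theorem \ref{binet_theorem} with $z=n+1$, $\frac{1}{n+1}\ln (n+1)! = \ln(n+1) - 1 + O\big(\frac{\ln n}{n}\big)$.
\item By Lemma \ref{ball asympototics} with $m=1$, $\frac{1}{n+1}\ln\omega_{n+1} = -\frac12\ln n + \frac12\ln(2\pi e) + O\big(\frac{\ln n}{n}\big)$.
\item Subtracting the sum of these two from zero gives $-\frac12 \ln n + 1 - \frac12\ln(2\pi e) + o(1) = \ln\frac{\sqrt e}{\sqrt{2\pi n}} + o(1)$. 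The only care needed is that the $O(1/n)$ corrections, after division by $n+1$, contribute $o(1)$.
\end{itemize}
Multiplying the three pieces gives $A_n\sqrt{2\pi n} \to 2\sqrt e$.

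For $B_n$, I will write $\frac{\omega_{n-1}}{\omega_{n-2}} = \sqrt{\pi}\,\frac{\Gamma(\frac n2)}{\Gamma(\frac{n+1}2)}$. Then I apply Theorem \ref{binet_theorem} to both gamma values, or equivalently subtract the two instances $m=-1$ and $m=-2$ of Lemma \ref{ball asympototics}. The difference of the logarithms is
\[
\tfrac12\big[-\ln n + \ln(2\pi) + O(\tfrac1n)\big],
\]
because the $-n\ln n$ and $\ln(2\pi e)n$ terms cancel exactly. So $\frac{\omega_{n-1}}{\omega_{n-2}} = \sqrt{2\pi/n}\,(1+O(1/n))$. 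This gives $(n-1)\frac{\omega_{n-1}}{\omega_{n-2}} = \sqrt{2\pi n}\,(1+O(1/n))$, which tends to infinity, so the $-1$ in the denominator of $B_n$ is negligible and $B_n\sqrt{2\pi n}\to 1$.

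The main obstacle is bookkeeping rather than any conceptual step. In $A_n$, the exponents $\frac1n$ and $\frac1{n+1}$ must be applied to quantities whose logarithms grow like $n\ln n$. One must therefore check that every discarded term is $o(n)$ before division, including the $\ln n$ terms from Stirling and from the $\pi n$ factor. I will handle this by writing each logarithm explicitly to order $O(\ln n)$ and dividing only at the end. In $B_n$, the point to verify is the exact cancellation of the $n\ln n$ and linear terms between $m=-1$ and $m=-2$ in Lemma \ref{ball asympototics}. This cancellation is what leaves the $\frac12\ln\frac{2\pi}{n}$ behavior.
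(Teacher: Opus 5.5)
Your proposal is correct and follows essentially the same route as the paper: both obtain the leading-order behavior of $\ln A_n$ and $\ln B_n$ from Theorem~\ref{binet_theorem} and Lemma~\ref{ball asympototics}, which gives $A_n \sim \sqrt{2e/(\pi n)}$ and $B_n \sim 1/\sqrt{2\pi n}$, and then take the ratio. The paper differs only in bookkeeping: it also tracks second-order corrections and treats $B_n$ through $B_{n+1}$ using the cases $m=-1,0$, neither of which is needed for the limit itself.
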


\begin{proof} From the definition of $A_n$, 
\begin{align}\label{further_asymp_pf_eqn_1}
\displaystyle \ln A_n = \Big( 1 - \frac{1}{n} \Big) \ln 2 + \frac{\ln \pi}{n} + \frac{\ln n}{n} + \frac{1}{n+1} (\ln K_{n+1} - \ln \omega_{n+1}),
\end{align}	
where $K_{n+1} = \frac{2}{\sqrt{3} (n+1)!}$ as above. Theorem \ref{binet_theorem}, \eqref{logarithm shift}, and the identity $(n+1)! = (n+1)\Gamma(n+1)$ imply: 
\begin{align*}
\displaystyle \ln K_{n+1} = - n \ln n + n - \frac{3}{2} \ln n + \frac{1}{2} \ln \Big( \frac{2}{3\pi} \Big) + O \Big( \frac{1}{n} \Big).
\end{align*} 
Combining this with Lemma \ref{ball asympototics} yields
\begin{align}\label{further_asymp_pf_eqn_5}
\displaystyle \ln K_{n+1} - \ln \omega_{n+1} = \frac{1}{2} \Big[ - n \ln n + \ln \Big( \frac{e}{2\pi} \Big) n - \ln n + \ln \Big( \frac{1}{3\pi} \Big) + O \Big( \frac{1}{n} \Big) \Big].
\end{align}
From the elementary fact that $\frac{n}{n+1} = 1 - \frac{1}{n} + O \Big( \frac{1}{n^2} \Big)$, this implies
\begin{equation}\label{K minus ball}
\frac{n}{n+1} (\ln K_{n+1} - \ln \omega_{n+1}) = \frac{1}{2} \Big[ - n \ln n + \ln \Big( \frac{e}{2\pi} \Big) n + \ln \Big( \frac{2}{3e} \Big) + O \Big( \frac{\ln n}{n} \Big) \Big].
\end{equation}
Dividing (\ref{K minus ball}) by $n$ then gives
\begin{equation}\label{kmb2}
\displaystyle \frac{1}{n+1}(\ln K_{n+1} - \ln \omega_{n+1}) = \frac{1}{2} \Big[ -\ln n + \ln(\frac{e}{2\pi}) + O \Big( \frac{1}{n} \Big) \Big].	
\end{equation}
Together, \eqref{further_asymp_pf_eqn_1} and \eqref{kmb2} imply
\begin{equation}\label{mfape_a}
\displaystyle \ln A_n = -\frac{1}{2} \ln n + \frac{1}{2} \ln \Big( \frac{2e}{\pi} \Big) + \frac{\ln n}{n} + O \Big( \frac{1}{n} \Big). 
\end{equation}
From the definition of $B_n$,
\begin{align*}
\displaystyle \ln B_{n+1} &= \ln \omega_{n-1} - \ln n - \ln \omega_n  - \ln \Big( 1 - \frac{\omega_{n-1}}{n\omega_n}  \Big).
\end{align*}
By the $m=-1$ and $m=0$ cases of Lemma \ref{ball asympototics}, we then have
\begin{align}\label{further_asymp_pf_eqn_2}
\displaystyle \ln B_{n+1} = -\frac{1}{2} \ln n - \frac{1}{2} \ln(2\pi) - \ln \Big( 1 - \frac{\omega_{n-1}}{n\omega_n}  \Big) + O \Big( \frac{1}{n} \Big).
\end{align}
By a calculation with the Maclaurin series for $\ln(1 - x)$ for $|x| < 1$, as in the proof of Lemma \ref{ball asympototics}, 
\begin{align}\label{further_asymp_pf_eqn_3}
\displaystyle \ln \Big( 1 - \frac{\omega_{n-1}}{n\omega_n}  \Big) = -\frac{\omega_{n-1}}{n\omega_n} + O \Big( \frac{1}{n^2} \Big). 
\end{align}
By the $m=-1$ and $m=0$ cases of Lemma \ref{ball asympototics} as above, 
\begin{align*}
\displaystyle \ln \Big( \frac{\omega_{n-1}}{n \omega_n} \Big) = \frac{1}{2} \ln \Big( \frac{1}{2\pi n} \Big) + O \Big( \frac{1}{n} \Big).
\end{align*}
Exponentiating this identity, and estimating the factor $\exp \big( O( \frac{1}{n}) \big)$ via the Maclaurin series $\sum_{k=0}^\infty \frac{x^k}{k!}$ for $\exp(x)$, we then have 
\begin{align}\label{further_asymp_pf_eqn_4}
\displaystyle \frac{\omega_{n-1}}{n \omega_n} = \frac{1}{\sqrt{2\pi n}} + O \Big( \frac{1}{n} \Big).
\end{align}
Combining \eqref{further_asymp_pf_eqn_2}, \eqref{further_asymp_pf_eqn_3}, and \eqref{further_asymp_pf_eqn_4} implies
\begin{equation}\label{mfape_b}
\displaystyle \ln B_n = -\frac{1}{2} \ln n - \frac{1}{2} \ln(2\pi) + \frac{1}{\sqrt{2\pi n}} + O \Big( \frac{1}{n} \Big). 
\end{equation}
From \eqref{mfape_a} and \eqref{mfape_b},
\[
	\ln \Big( \frac{A_n}{B_n} \Big) = \ln(2\sqrt{e}) + \frac{\ln n}{2n} - \frac{1}{\sqrt{2\pi n}} + O \Big( \frac{1}{n}\Big),
\]
which gives the result. \end{proof}
\noindent As discussed above, Lemmas \ref{intersection piece} and \ref{further asymptotics} imply that $\rho_n^* > \rho_n$ for all sufficiently large $n$. For those $n$ with $\rho_n^* > \rho_n$, the definitions of $\mathcal{I}_n$ and $\mathcal{J}_n$ give $\rho_n^*$ as the unique root in the interval $(1,\infty)$ of the following degree-$n$ polynomial: 
\begin{equation}\begin{aligned}\label{definition of intersection point}
\rho_n^* (\rho_n^* - 1)^{n-1} &= \pi n(n-1) 2^{n-1} \Big( \frac{\omega_{n-1}}{\omega_{n-2}} \Big) \Big( \frac{K_{n+1}}{\omega_{n+1}} \Big)^{\frac{n}{n+1}},
\end{aligned}\end{equation}
where $K_{n+1}$ is as above, and where we have used the identity $\sigma_n = 2\pi \omega_{n-1}$ to derive the expression on the right-hand side of \eqref{definition of intersection point}. The constants in \eqref{definition of intersection point} have the following asymptotic behavior.

\begin{lemma}\label{asymptotics_lemma_1} 
Let $C_n$ be the $(n-1)$-st root of the right-hand side of \eqref{definition of intersection point}, that is,
\begin{align*}
\displaystyle C_n = 2[\pi n (n-1)]^{\frac{1}{n-1}} \Big( \frac{\omega_{n-1}}{\omega_{n-2}} \Big)^{\frac{1}{n-1}} \Big( \frac{K_{n+1}}{\omega_{n+1}} \Big)^{\frac{n}{(n+1)(n-1)}}. 	
\end{align*}
Then, $C_n = \sqrt{ \frac{2e}{\pi n} } \Big[ 1 + \frac{\ln n}{n} + O \Big( \frac{1}{n} \Big) \Big]$ as $n \to \infty$. In particular, $\lim\limits_{n \to \infty} C_n = 0$. 
\end{lemma}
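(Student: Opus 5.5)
The plan is to take logarithms and reuse the expansions already established in the proof of Lemma \ref{further asymptotics}. Writing
\[
\ln C_n = \ln 2 + \frac{1}{n-1}\Big[\ln(\pi n(n-1)) + \ln\Big(\frac{\omega_{n-1}}{\omega_{n-2}}\Big)\Big] + \frac{n}{n-1}\cdot\frac{1}{n+1}\big(\ln K_{n+1} - \ln \omega_{n+1}\big),
\]
I will expand each of the three groups of terms to order $\frac{\ln n}{n}$ with error $O(\frac1n)$.

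For the last group, I start from \eqref{kmb2}, which gives $\frac{1}{n+1}(\ln K_{n+1}-\ln\omega_{n+1}) = \frac12[-\ln n + \ln(\frac{e}{2\pi})] + O(\frac1n)$. I then multiply by $\frac{n}{n-1} = 1 + \frac{1}{n-1}$. The correction term $\frac{1}{n-1}\cdot\frac12[-\ln n + O(1)]$ contributes $-\frac{\ln n}{2n} + O(\frac1n)$, and the error $O(\frac1n)$ stays $O(\frac1n)$.

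For the middle group, $\ln(\pi n(n-1)) = 2\ln n + O(1)$, so dividing by $n-1$ gives $\frac{2\ln n}{n} + O(\frac1n)$. For the ball ratio, the $m=-1$ and $m=-2$ cases of Lemma \ref{ball asympototics} give $\ln\frac{\omega_{n-1}}{\omega_{n-2}} = -\frac12\ln n + O(1)$. Dividing by $n-1$ then contributes $-\frac{\ln n}{2n} + O(\frac1n)$.

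Summing the three groups gives
\[
\ln C_n = \ln 2 + \tfrac12\ln\big(\tfrac{e}{2\pi}\big) - \tfrac12\ln n + \big(2 - \tfrac12 - \tfrac12\big)\frac{\ln n}{n} + O\big(\tfrac1n\big) = \tfrac12\ln\Big(\frac{2e}{\pi n}\Big) + \frac{\ln n}{n} + O\big(\tfrac1n\big).
\]
Exponentiating, I write $\exp(\frac{\ln n}{n} + O(\frac1n)) = 1 + \frac{\ln n}{n} + O(\frac1n)$ using the Maclaurin series for $\exp$. This is valid because the quadratic term $(\frac{\ln n}{n})^2$ is $O(\frac1n)$, which yields the stated formula. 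The limit $C_n \to 0$ is then immediate from the factor $\sqrt{\frac{2e}{\pi n}}$.

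The only delicate point is bookkeeping. The factor $\frac{n}{n-1}$ acting on the $-\frac12\ln n$ term produces a $\frac{\ln n}{n}$ contribution that must not be absorbed into the error term. I also need to confirm that every discarded term, including the $\frac{\ln n}{n}$ slack in \eqref{K minus ball} once it has been divided through, is genuinely $O(\frac1n)$ rather than $O(\frac{\ln n}{n})$.
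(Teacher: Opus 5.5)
Your proposal is correct and follows essentially the same route as the paper: take logarithms, use the $m=-1,-2$ cases of Lemma \ref{ball asympototics} together with the expansion of $\ln K_{n+1}-\ln\omega_{n+1}$, and collect the $\frac{\ln n}{n}$ terms with net coefficient $2-\frac12-\frac12=1$. The only cosmetic difference is that you multiply \eqref{kmb2} by $\frac{n}{n-1}$ rather than multiplying the whole bracket, including \eqref{K minus ball}, by $\frac{1}{n-1}$; your bookkeeping and the exponentiation step both check out.
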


\begin{proof} From the definition of $C_n$, 
\begin{align*}
\displaystyle \ln C_n &= \ln 2 + \frac{1}{n-1} \Big[ \ln \pi + \ln n + \ln(n-1) + \ln \omega_{n-1} - \ln \omega_{n-2} + \frac{n}{n+1} (\ln K_{n+1} - \ln \omega_{n+1}) \Big]. 
\end{align*}
By \eqref{logarithm shift}, \eqref{reciprocal shift}, \eqref{K minus ball}, and the cases $m=-1$ and $m=-2$ of Lemma \ref{ball asympototics}, we then have
\begin{align*}
\ln C_n &= \ln 2 + \frac{1}{2} \Big[ \frac{1}{n} + \frac{1}{n^2} + O \Big( \frac{1}{n^3} \Big) \Big] \Big[ - n \ln n + \ln \Big( \frac{e}{2\pi} \Big) n + 3 \ln n + \ln \Big( \frac{4 \pi^3}{3e} \Big) + O \Big( \frac{\ln n}{n} \Big) \Big]\\
&= - \frac{1}{2} \ln n + \frac{1}{2} \ln \Big( \frac{2e}{\pi} \Big) + \frac{\ln n}{n} + O \Big( \frac{1}{n} \Big),
\end{align*}
which implies the result. \end{proof}

\noindent Equation \eqref{definition of intersection point} can be solved in radicals to determine $\rho_n^*$ for $n \leq 4$. In general, let $C_n$ be as in Lemma \ref{asymptotics_lemma_1}, and, for each natural number $N$, let $f_N(\rho) = \rho^{1/N}(\rho - 1)$. The function $f_N$ is strictly increasing, and therefore invertible, on the interval $\big( \frac{1}{N+1},\infty \big)$, and $\rho_n^* = f_{n-1}^{-1}(C_n)$. We will use this identity to calculate $\rho_n^*$, for sufficiently large $n$, by an analytic method due to Lagrange and B\"{u}rmann.

\begin{theorem}[Lagrange--B\"{u}rmann \cite{LagrangeLegendre1799,Cauchy1932}]\label{lagrange_burmann}
Let $f$ be complex-analytic on a neighborhood of $a$ with $f'(a) \neq 0$. Then, for $c_k = \lim\limits_{z \to a} \frac{d^{k-1}}{dz^{k-1}} \Big|_{z = a} \Big[ \Big( \frac{z - a}{f(z) - f(a)} \Big)^k \Big]$, the inverse of $f$ in a neighborhood of $f(a)$ is given by $f^{-1}(w) = a + \sum_{k=1}^\infty c_k \frac{(w - f(a))^k}{k!}$. 
\end{theorem}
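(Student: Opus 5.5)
We first reduce to the case $a = 0$ and $f(a) = 0$ by replacing $f(z)$ with $f(z+a) - f(a)$. This changes neither the hypotheses nor the constants $c_k$, and it turns the claimed expansion into $f^{-1}(w) = \sum_{k \geq 1} c_k \frac{w^k}{k!}$. Since $f'(0) \neq 0$, the holomorphic inverse function theorem gives $r > 0$ with three properties: $f$ is injective on the disk $|z| \leq r$, $f$ has no zero there except $z = 0$, and the image of this disk contains a neighborhood $U$ of $0$ on which $g = f^{-1}$ is holomorphic with $g(0) = 0$. Consequently $g$ equals its Taylor series on some disk about $0$, with coefficients $\frac{g^{(k)}(0)}{k!}$. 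It therefore suffices to show that $g^{(k)}(0) = c_k$ for all $k \geq 1$. Along the way we observe that $\frac{z}{f(z)}$ has a removable singularity at $0$, with value $\frac{1}{f'(0)} \neq 0$. Hence the limit in the definition of $c_k$ is simply the $(k-1)$-st derivative at $0$ of the holomorphic function $\big(\frac{z}{f(z)}\big)^k$.

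To compute the Taylor coefficients we use Cauchy's formula on a small circle $\Gamma$ in $U$:
\[
\frac{g^{(k)}(0)}{k!} = \frac{1}{2\pi i} \oint_\Gamma \frac{g(w)}{w^{k+1}}\, dw .
\]
We take $\Gamma = f(\gamma)$, where $\gamma$ is the circle $|z| = s$ for some $s \leq r$ small enough that $f(\gamma) \subset U$. Substituting $w = f(z)$ turns the integral into
\[
\frac{1}{2\pi i}\oint_\gamma \frac{z f'(z)}{f(z)^{k+1}}\, dz .
\]
We then write $\frac{z f'(z)}{f(z)^{k+1}} = -\frac{1}{k}\, z \frac{d}{dz}\big(f(z)^{-k}\big)$ and integrate by parts around the closed curve $\gamma$; the boundary term vanishes. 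This gives
\[
\frac{1}{2\pi i}\oint_\gamma \frac{z f'(z)}{f(z)^{k+1}}\, dz = \frac{1}{k}\cdot\frac{1}{2\pi i}\oint_\gamma \frac{dz}{f(z)^k} = \frac{1}{k}\,\mathrm{Res}_{z=0}\frac{1}{f(z)^k}.
\]
Writing $\frac{1}{f(z)^k} = \frac{h(z)}{z^k}$ with $h = \big(\frac{z}{f}\big)^k$ holomorphic, the residue equals $\frac{h^{(k-1)}(0)}{(k-1)!}$. Multiplying through by $k!$ yields $g^{(k)}(0) = h^{(k-1)}(0) = c_k$, which is the claim.

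The main point requiring care is the change of variables. We must check that $\Gamma = f(\gamma)$ is a legitimate contour for Cauchy's formula, that is, a simple closed curve in $U$ winding exactly once, positively, around $0$. This holds because $f$ is injective and orientation-preserving (conformal) on the disk, so $f(\gamma)$ bounds the simply connected domain $f(\{|z| < s\})$, which contains $0$. Alternatively, the argument principle gives winding number $1$, since $f$ has exactly one simple zero inside $\gamma$. Everything else is routine residue calculus.
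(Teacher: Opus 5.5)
Your proof is correct and complete. The paper gives no proof of this statement: it quotes it as a classical result and uses it only to expand $f_N^{-1}$ in Lemma~\ref{asymptotics_prop_3}. Your argument is the standard contour-integral proof of Lagrange inversion: Cauchy's formula for the Taylor coefficients of $g = f^{-1}$, the substitution $w = f(z)$, integration by parts, and a residue computation.

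One step is worth stating a bit more carefully, and you already flag it. Shrink $U$ to a disk so that $f \circ \gamma$ is null-homologous in the domain of $g$. Then get winding number one from the argument principle, $\frac{1}{2\pi i}\oint_\gamma \frac{f'(z)}{f(z)}\,dz = 1$, which is cleaner than the Jordan-curve and orientation argument.
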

\noindent Sequences $\left(a_n\right)_{n=1}^{\infty}$ and $\left(b_n\right)_{n=1}^{\infty}$ are defined to be \textbf{asymptotically equivalent}, written $a_n \sim b_n$, if $\displaystyle \lim\limits_{n \to \infty} \frac{a_n}{b_n} = 1$. The Lagrange-B\"urmann formula applies to $f_{n-1}$ via the following result. 

\begin{lemma}\label{asymptotics_prop_3}
For each natural number $N$, let $f_N$ be the complex-analytic function in a neighborhood of $z = 1$ given by $f_N(z) = z^{1/N}(z - 1)$, with $z^{1/N}$ the principal branch of the $N$-th root, i.e., with $1^{1/N} = 1$. Then, $f_N$ is invertible near $z=1$, and, in a neighborhood of $f_N(1) = 0$, 
\begin{align*}
\displaystyle f_N^{-1}(y) = 1 + \sum_{k=1}^\infty \frac{d^{k-1}}{d\rho^{k-1}} \Big|_{\rho = 1} \Big[ \frac{1}{\rho^{k/N}} \Big] \frac{y^k}{k!} = 1 + y + \sum_{k=2}^\infty \frac{(-1)^{k-1} (\frac{k}{N})^{(k-1)}}{k!} y^k,
\end{align*}
where $x^{(m)}$ is the Pochhammer symbol for the rising factorial, i.e., $x^{(m)} = \prod_{j = 0}^{m-1} (x + j)$. The radius of convergence of this series is $\frac{N}{(N+1)^{1 + \frac{1}{N}}}$. 
\end{lemma}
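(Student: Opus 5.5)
We need to prove Lemma \ref{asymptotics_prop_3}. The plan has four steps: verify the hypotheses of Theorem \ref{lagrange_burmann}, compute the coefficients $c_k$, put them in Pochhammer form, and find the radius of convergence.

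\textbf{Hypotheses and coefficients.} Take $a=1$. Then $f_N(1)=0$, and $f_N'(z)=z^{1/N}+\frac{1}{N}z^{1/N-1}(z-1)$ gives $f_N'(1)=1\neq 0$. The principal branch of $z^{1/N}$ is analytic near $1$, so $f_N$ is invertible near $1$ by the inverse function theorem, and Theorem \ref{lagrange_burmann} applies. The key simplification is
\[
\frac{z-1}{f_N(z)-f_N(1)}=z^{-1/N},
\]
so $c_k=\frac{d^{k-1}}{d\rho^{k-1}}\big|_{\rho=1}\rho^{-k/N}$ and no limit is actually needed. This gives the first displayed formula.

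\textbf{Pochhammer form.} Differentiating repeatedly,
\[
\frac{d^{k-1}}{d\rho^{k-1}}\rho^{-k/N}=(-1)^{k-1}\Big(\tfrac kN\Big)\Big(\tfrac kN+1\Big)\cdots\Big(\tfrac kN+k-2\Big)\rho^{-k/N-k+1}.
\]
At $\rho=1$ this equals $(-1)^{k-1}(\frac kN)^{(k-1)}$. The $k=1$ term is $y$, so the second formula follows.

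\textbf{Radius of convergence.} I will use the ratio test on $a_k=(k/N)^{(k-1)}/k!=\Gamma(k/N+k-1)/(\Gamma(k/N)\,k!)$. By Theorem \ref{binet_theorem} (Stirling), $a_k^{1/k}$ tends to
\[
\frac{(1+1/N)^{1+1/N}}{(1/N)^{1/N}}=\frac{(N+1)^{1+1/N}}{N}.
\]
Concretely, $\Gamma(k(1+\frac1N)-1)^{1/k}\approx (k(1+\frac1N)/e)^{1+1/N}$, $\Gamma(k/N)^{1/k}\approx(k/(Ne))^{1/N}$, and $(k!)^{1/k}\approx k/e$; the powers of $k$ and $e$ cancel. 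The radius is the reciprocal, $\frac{N}{(N+1)^{1+1/N}}$.

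\textbf{Main obstacle and cross-check.} The only delicate point is matching the radius with the geometry. It should equal $|f_N|$ at the nearest singularity of $f_N^{-1}$, which is a critical point of $f_N$. Setting $f_N'(z)=0$ gives $z=\frac1{N+1}$, matching the interval of monotonicity mentioned in the paper. There $|f_N(\tfrac1{N+1})|=(N+1)^{-1/N}\frac{N}{N+1}$, which agrees with the ratio-test result.

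The branch point $z=0$ of $z^{1/N}$ also needs checking, since $f_N$ is not analytic there. Near $z=0$ one has $f_N\approx -z^{1/N}$, so this singularity lies at distance $|f_N(0)|=0$ in the image. However, the ratio test already settles the radius rigorously, so this geometric consideration is only a consistency check. The Stirling limit is therefore the step to write most carefully, applying Theorem \ref{binet_theorem} to $\ln a_k$.
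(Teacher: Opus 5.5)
Your proposal is correct and follows the paper's proof. It uses the same application of Theorem \ref{lagrange_burmann} with the simplification $\frac{z-1}{f_N(z)}=z^{-1/N}$, and a Stirling-type asymptotic for the radius. The only difference is in the radius step: although you call it the ratio test, you actually apply the root test (Cauchy--Hadamard) to $a_k^{1/k}$, whereas the paper takes the ratio of consecutive coefficients and uses $\Gamma(z+a)/\Gamma(z+b)\sim z^{a-b}$; the two are interchangeable here.

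Your aside about $z=0$ is muddled. That point is never reached by the branch of $f_N^{-1}$ that the series represents, which takes the value $1$ at $y=0$, so it imposes no constraint on the radius. It plays no role in your argument, though, and your critical-point check at $z=\frac{1}{N+1}$ correctly locates the nearest singularity at $y=-\frac{N}{(N+1)^{1+1/N}}$.
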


\begin{proof} The invertibility of $f_N$ near $1$ follows from the fact that $f_N'(1) = 1$, and an elementary calculation gives the power series for $f_N^{-1}(y)$ via Theorem \ref{lagrange_burmann}. To determine its radius of convergence, we use the identity $(x)^{(m)} = \frac{\Gamma(x + m)}{\Gamma(x)}$ and compute the ratio of the magnitudes of successive coefficients:
\begin{equation}\label{asymptotics_prop_3_pf_eqn_1} 
\frac{\Big(\frac{\left(\frac{k+1}{N}\right)^{(k)}}{(k+1)!}\Big)}{\Big(\frac{\left(\frac{k}{N}\right)^{(k-1)}}{k!}\Big)} = \frac{1}{k+1} \frac{\Gamma(k + \frac{k}{N} + \frac{1}{N})}{\Gamma(k + \frac{k}{N} - 1)} \frac{\Gamma(\frac{k}{n})}{\Gamma(\frac{k}{N} + \frac{1}{N})}.
\end{equation}
A well-known approximation, which follows from Theorem \ref{binet_theorem}, is that $\frac{\Gamma(z + a)}{\Gamma(z + b)} \sim z^{a-b}$ as $z \to \infty$. As $k \to \infty$, the above ratio is therefore asymptotic to $\frac{1}{k} \Big( k + \frac{k}{N} \Big)^{1 + \frac{1}{N}} \Big( \frac{k}{N} \Big)^{-\frac{1}{N}} = \frac{(N+1)^{1 + \frac{1}{N}}}{N}$, which gives the radius of convergence. \end{proof}

\begin{lemma}\label{convergence_limit_lemma}
Let $\Phi_{N}(k) = \frac{NK + N}{(N+1)k + 1 - N} \big( \frac{k}{k+1} \big)^{k-1}$. Then, $\Phi_{N}$ is nonincreasing for $k \geq 2$, and $\lim\limits_{k \to \infty} \Phi_{N}(k) = \frac{N}{(N+1)e}$.
\end{lemma}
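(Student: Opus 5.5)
The limit is immediate, and for monotonicity I would treat $k$ as a real variable on $[2,\infty)$ and show that $\frac{d}{dk}\ln \Phi_N(k) \leq 0$. (I read the ``$NK$'' in the numerator as $Nk$.)

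\emph{Limit.} Rewrite $\Phi_N(k) = \frac{N(k+1)}{(N+1)k+1-N}\big(1+\frac{1}{k}\big)^{-(k-1)}$. The first factor tends to $\frac{N}{N+1}$. The second equals $\big(1+\frac1k\big)^{-k}\big(1+\frac1k\big)$, which tends to $e^{-1}$. This gives the limit $\frac{N}{(N+1)e}$.

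\emph{Reduction.} For $k \geq 1$ and $N \geq 1$ the denominator $(N+1)k+1-N$ is positive, so $\Phi_N > 0$ there. Combining the factor $k+1$ with $(k/(k+1))^{k-1}$ gives
\[
\Phi_N(k) = N \,\frac{k^{k-1}(k+1)^{-(k-2)}}{(N+1)k+1-N}.
\]
Differentiating the logarithm yields
\[
\frac{d}{dk}\ln\Phi_N(k) = \ln k + \frac{k-1}{k} - \ln(k+1) - \frac{k-2}{k+1} - \frac{N+1}{(N+1)k+1-N}.
\]
The two rational terms combine to $\frac{2k-1}{k(k+1)}$. For the last term, set $c = \frac{N-1}{N+1} \in [0,1)$. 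Then $\frac{N+1}{(N+1)k+1-N} = \frac{1}{k-c} \geq \frac1k$. It therefore suffices to show
\[
-\ln\Big(1+\frac1k\Big) + \frac{2k-1}{k(k+1)} - \frac1k = -\ln\Big(1+\frac1k\Big) + \frac{k-2}{k(k+1)} \leq 0 .
\]

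\emph{Key estimate.} This is the only step needing care: the positive $\frac{2k-1}{k(k+1)}$ must be beaten by the logarithm together with the $N$-dependent term. The $N$-dependence is handled uniformly by the bound $\frac1{k-c}\ge \frac1k$, so everything reduces to a one-variable inequality. I would use $\ln(1+x) \geq x - \frac{x^2}{2}$ with $x = \frac1k$. The left side is then at most
\[
-\frac1k + \frac1{2k^2} + \frac{k-2}{k(k+1)} = \frac{1-5k}{2k^2(k+1)} < 0
\]
for all $k \geq 1$. Hence $\ln\Phi_N$ is strictly decreasing on $[1,\infty)$. In particular $\Phi_N$ is nonincreasing on the integers $k \geq 2$.
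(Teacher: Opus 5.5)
Your proof is correct, and reading $NK$ as $Nk$ is the intended interpretation. Your route differs somewhat from the paper's.

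The paper writes $\Phi_N = \varphi_N e^{\psi}$, with $\varphi_N(k) = \frac{N(k+1)}{(N+1)k+1-N}$ and $\psi(k) = (k-1)\ln\frac{k}{k+1}$. It then shows that each positive factor is separately decreasing. For $\varphi_N$ the quotient rule gives $\varphi_N'(k) = \frac{-2N^2}{((N+1)k+1-N)^2} < 0$. For $\psi$ it uses a convexity argument: $\psi''(k) = \frac{3k+1}{k^2(k+1)^2} > 0$ together with $\psi'(k) \to 0$ forces $\psi' < 0$.

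You instead take the logarithmic derivative of the whole product and bound the $N$-dependent term by $\frac{1}{k-c} \ge \frac{1}{k}$. In effect you borrow the slack $\frac{1}{k(k+1)}$ from the rational factor's log-derivative $\frac{1}{k+1} - \frac{1}{k-c} \le -\frac{1}{k(k+1)}$. What remains, $-\ln(1+\frac{1}{k}) + \frac{k-2}{k(k+1)} \le 0$, equals $\psi'(k) - \frac{1}{k(k+1)}$. It is therefore a weaker requirement than the paper's $\psi' < 0$.

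What each approach buys:
\begin{itemize}
\item \textbf{Your version.} Because the residual inequality is weaker, the crude bound $\ln(1+x) \ge x - \frac{x^2}{2}$ closes it. You get an explicit negative upper bound on the log-derivative, uniform in $N$, with no limiting argument.
\item \textbf{The paper's version.} It yields the slightly more informative fact that both factors are individually monotone. It also keeps the $N$-dependence confined to an elementary rational function.
\end{itemize}

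Both arguments in fact give monotonicity on all of $[1,\infty)$, which is more than the lemma requires.
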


\begin{proof} Let $\varphi_{N}(k) = \frac{NK + N}{(N+1)k + 1 - N}$ and $\psi(k) = (k-1) \ln \big( \frac{k}{k+1} \big)$, so that $\Phi_{N}(k) = \varphi_{N}(k) e^{\psi(k)}$. An elementary calcuation gives $\varphi_{N}'(k) = \frac{-2N^2}{((N+1)k + 1 - N)^2} < 0$, which implies $\varphi_{N}$ is strictly decreasing in $k$, and $\psi'(k) = \ln \left( \frac{k}{k+1} \right) + \frac{k-1}{k(k+1)}$, which implies $\psi'(2) = \ln \frac{2}{3} + \frac{1}{6} \approx -0.2388 < 0$ and $\lim\limits_{k \to \infty} \psi'(k) = 0$. We thus have $\psi''(k) = \frac{1}{k^2} + \frac{1}{k(k+1)} - \frac{2}{(k+1)^2} > 0$, which implies, together with the above, that $\psi'(k) < 0$, and therefore that $\psi$ is decreasing in $k$ for all $k \geq 2$. Together, these observations imply $\Phi_{N}$ is decreasing in $k$ for $k \geq 2$. An elementary calculation with l'H\^opital's rule gives the limit. \end{proof}

\begin{lemma}\label{rho estimate lemma}
For $\rho_n^*$ as above, $\rho_n^* = 1 + \sqrt{\frac{2e}{\pi n}} + \sqrt{\frac{2e}{\pi}} \frac{\ln n}{n^{3/2}} + O \Big( \frac{1}{n^{3/2}} \Big)$ as $n \to \infty$.  
\end{lemma}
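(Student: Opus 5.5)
The plan is to use the identity $\rho_n^* = f_{n-1}^{-1}(C_n)$ together with the power series for $f_N^{-1}$ from Lemma \ref{asymptotics_prop_3}, taking $N = n-1$ and $y = C_n$. For this identity to be valid we need $\rho_n^* > \rho_n$. By Lemmas \ref{intersection piece} and \ref{further asymptotics}, this holds for all sufficiently large $n$, since $2\sqrt{e} > 1$.

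First we check that the series converges at $y = C_n$ for large $n$. By Lemma \ref{asymptotics_lemma_1}, $C_n \sim \sqrt{2e/(\pi n)} \to 0$. The radius of convergence is $\frac{N}{(N+1)^{1+1/N}}$, which tends to $1$ as $N \to \infty$. So $C_n$ eventually lies well inside the disk of convergence, and the real analytic branch of $f_N^{-1}$ there agrees with the real inverse on $(\frac{1}{N+1},\infty)$ near $1$.

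Next we split the series as
\[
\rho_n^* = 1 + C_n + R_n, \qquad R_n = \sum_{k\ge 2} \frac{(-1)^{k-1}(\frac{k}{N})^{(k-1)}}{k!} C_n^k .
\]
The main term follows from Lemma \ref{asymptotics_lemma_1}:
\[
C_n = \sqrt{\tfrac{2e}{\pi n}} + \sqrt{\tfrac{2e}{\pi}}\,\tfrac{\ln n}{n^{3/2}} + O(n^{-3/2}).
\]
This already produces the stated expansion, so it remains to show $R_n = O(n^{-3/2})$. In fact we expect $R_n = O(C_n^2/N) = O(n^{-2})$. The $k=2$ coefficient is $-\frac{2/N}{2} = -\frac1N$, so the $k=2$ term is $-C_n^2/N = O(n^{-2})$.

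The main obstacle is bounding the tail $k \ge 3$ uniformly in $N$. I would bound the Pochhammer factor directly:
\[
\Big(\tfrac{k}{N}\Big)^{(k-1)} = \tfrac{k}{N}\prod_{j=1}^{k-2}\Big(\tfrac{k}{N}+j\Big) \le \tfrac{k}{N}\,(k-2)!\,\Big(1+\tfrac{k}{N}\Big)^{k-2}.
\]
Dividing by $k!$, the $k$-th coefficient is at most $\frac{1}{N(k-1)}\big(1+\frac{k}{N}\big)^{k-2}$. For $k \le N$ this is at most $\frac{2^{k}}{N}$. For $k > N$ we have $(1+\frac{k}{N})^{k-2} \le (2k/N)^{k}$, and a Stirling-type comparison (as in Lemma \ref{convergence_limit_lemma}, via the ratio $\Phi_N$) shows the coefficients then grow at most geometrically, at ratio about $(N+1)^{1+1/N}/N \le 2e$. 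In either range the $k$-th term is bounded by $\frac{1}{N}(c\,C_n)^k$ for an absolute constant $c$. Since $C_n \to 0$, the tail sums to $O(C_n^3/N) = O(n^{-5/2})$.

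Combining these gives $R_n = O(n^{-2})$, which yields the stated asymptotic for $\rho_n^*$. If the uniform coefficient bound turns out to be delicate, a fallback is to avoid the series altogether. We would write $\rho = 1 + t$ and use $f_N(1+t) = t(1+t)^{1/N} = t\bigl(1 + O(t/N)\bigr)$ for small $t$. Solving $f_N(1+t) = C_n$ then directly gives $t = C_n\bigl(1 + O(C_n/N)\bigr)$, using the monotonicity of $f_N$.
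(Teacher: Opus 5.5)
Your proposal is correct, and its main line is the paper's. Both arguments use $\rho_n^* = f_{n-1}^{-1}(C_n)$, which is valid once $\rho_n^*>\rho_n$, and Lemmas \ref{intersection piece} and \ref{further asymptotics} give that for large $n$. Both then use the Lagrange--B\"{u}rmann series of Lemma \ref{asymptotics_prop_3}, the expansion of $C_n$ from Lemma \ref{asymptotics_lemma_1}, and the fact that the $k=2$ term is $-C_n^2/N = O(n^{-2})$.

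The difference is in how the tail is handled. The paper notes that the terms alternate in sign. By the coefficient-ratio bound $1/\Phi_N(k)$ and Lemma \ref{convergence_limit_lemma}, they also decrease in absolute value from $k=2$ on. The alternating-series estimate then gives $1 + C_n - \frac{C_n^2}{n-1} \le \rho_n^* \le 1 + C_n$ at once. You instead dominate the tail in absolute value by a geometric series.

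That works, but your split at $k=N$ is unnecessary. The same ratio bound $1/\Phi_N(k) \le \frac{(N+1)e}{N} \le 2e$, applied from $k=2$, already gives $|a_k| \le (2e)^{k-2}/N$ for every $k \ge 2$. That yields a tail of size $O(C_n^3/N)$, and the superexponential intermediate bound $(2k/N)^k$ plays no role.

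Your fallback is the genuinely different route, and it is the better one. With $\rho_n^* = 1+t$ and $N = n-1$, the equation is $t(1+t)^{1/N} = C_n$. Since $(1+t)^{1/N} \ge 1$, you get $t \le C_n$. Bernoulli's inequality $(1+t)^{1/N} \le 1 + t/N$ then gives $C_n \le t + t^2/N \le t + C_n^2/N$.

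This reproduces the paper's two-sided bound exactly, and it holds for every $n$ with $\rho_n^* > \rho_n$. It avoids all of the following:
the power series;
the radius-of-convergence check;
identifying the analytic branch of $f_{n-1}^{-1}$ with the real root;
Lemma \ref{convergence_limit_lemma}.
The series approach would only pay off if you wanted further terms of the expansion, which this lemma does not need.
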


\begin{proof} As noted above, $\rho_n^* = f_{n-1}^{-1}(C_n)$, and Lemma \ref{asymptotics_lemma_1} implies that $C_n$ lies within the radius of convergence of the power series for $f_{n-1}^{-1}$ in Lemma \ref{asymptotics_prop_3} for sufficiently large $n$. Moreover, the ratio in \eqref{asymptotics_prop_3_pf_eqn_1} between the consecutive coefficients of the series satisfies 
\begin{equation*}
\displaystyle \frac{\Big(\frac{\left(\frac{k+1}{N}\right)^{(k)}}{(k+1)!}\Big)}{\Big(\frac{\left(\frac{k}{N}\right)^{(k-1)}}{k!}\Big)} =  \frac{(N+1)k + 1 - N}{nK + N} \prod\limits_{j=0}^{k-2} \frac{k + 1 + Nj}{k + Nj} \leq \frac{(N+1)k + 1 - N}{NK + N} \left( \frac{k+1}{k} \right)^{k-1}. 
\end{equation*}
For $|y| < \frac{N}{(N+1)e}$, Lemma \ref{convergence_limit_lemma} therefore implies that $\Big|\frac{(\frac{k}{N})^{(k-1)}}{k!} y^k\Big|$ is decreasing in $k$ for $k \geq 2$. Together, these observations imply that, for sufficiently large $n$, 	
\begin{equation}\label{rho bound}
\displaystyle 1 + C_n - \frac{1}{n-1} C_n^2 \leq \rho_n^* \leq 1 + C_n.
\end{equation}
By Lemma \ref{asymptotics_lemma_1} and \eqref{reciprocal shift}, 
\begin{equation}\label{rho estimate lemma pf eqn 1}
\displaystyle \frac{1}{n-1} C_n^2 = \frac{2e}{\pi n} \Big[ \frac{1}{n} + \frac{1}{n^2} + O \Big( \frac{1}{n^3} \Big) \Big] \Big[ 1 + \frac{2 \ln n}{n} + O \Big( \frac{1}{n} \Big) \Big] = \frac{2e}{\pi n^2} + O \Big( \frac{\ln n}{n^3} \Big).
\end{equation}
Combining Lemma \ref{asymptotics_lemma_1}, \eqref{rho bound}, and \eqref{rho estimate lemma pf eqn 1} gives the result. \end{proof}


\begin{proposition}\label{h_n_prop}
Let $\mathcal{I}_n$, $\mathcal{J}_n$, and $\rho_n^*$ be as above, and let $h_n = \mathcal{I}_n(\rho_n^*) = \mathcal{J}_n(\rho_n^*)$. Then, $h_n = \frac{2 e^{e/\pi}}{\sqrt{3e}} \big( \frac{e}{n} \big)^n e^{-\sqrt{\frac{2en}{\pi}}} \big[ 1 - \sqrt{\frac{2e}{\pi n}} \ln n + O \big( \frac{1}{\sqrt{n}} \big) \big]$ as $n \to \infty$. In particular,
\begin{equation}\label{h_n_prop_eqn_2}
\displaystyle h_n \sim \frac{2 e^{e/\pi}}{\sqrt{3e}} \Big( \frac{e}{n} \Big)^n e^{-\sqrt{\frac{2en}{\pi}}} \sim \frac{e^{e/\pi}}{n!} \sqrt{\frac{8\pi n}{3e}} e^{-\sqrt{\frac{2en}{\pi}}}.
\end{equation}
\end{proposition}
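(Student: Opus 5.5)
The plan is to evaluate $h_n$ through the simpler of the two expressions, $h_n = \mathcal{J}_n(\rho_n^*) = \sigma_n \big( \frac{K_{n+1}}{\omega_{n+1}} \big)^{\frac{n}{n+1}} (\rho_n^*)^{-n}$. This works because $\mathcal{J}_n$ has a single closed form, independent of whether $\rho_n^*$ lies above or below $\rho_n$. However, the asymptotic expansion of $\rho_n^*$ in Lemma \ref{rho estimate lemma} was derived from \eqref{definition of intersection point}, which requires $\rho_n^* > \rho_n$. So first I would record that Lemma \ref{intersection piece} together with Lemma \ref{further asymptotics} gives $A_n/B_n \to 2\sqrt{e} > 1$. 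Hence $\rho_n^* > \rho_n$ for all large $n$, and Lemma \ref{rho estimate lemma} applies.

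Next I would split $\ln h_n$ into two pieces, $\ln\sigma_n + \frac{n}{n+1}(\ln K_{n+1} - \ln\omega_{n+1})$ and $-n\ln\rho_n^*$.

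For the first piece, use $\sigma_n = 2\pi\omega_{n-1}$ and the case $m=-1$ of Lemma \ref{ball asympototics}:
\[
\ln\omega_{n-1} = \tfrac12\big[-n\ln n + \ln(2\pi e)n - \ln(2\pi) - \ln\pi + O(\tfrac1n)\big].
\]
Add this to $\ln(2\pi)$ and to \eqref{K minus ball}. The $n\ln n$ and $n$ terms combine to $-n\ln n + n$, and the constants combine to
\[
\ln(2\pi) + \tfrac12\ln\Big(\frac{2}{3e}\cdot\frac{1}{2\pi\cdot\pi}\Big) = \tfrac12\ln\frac{4}{3e}.
\]
Since the error is $O(\frac{\ln n}{n})$, the first piece contributes the factor $\frac{2}{\sqrt{3e}}\big(\frac{e}{n}\big)^n\big[1 + O(\frac{\ln n}{n})\big]$.

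For the second piece, write $a = \sqrt{2e/\pi}$, so that Lemma \ref{rho estimate lemma} reads $\rho_n^* = 1 + x$ with $x = \frac{a}{\sqrt n} + \frac{a\ln n}{n^{3/2}} + O(n^{-3/2})$. Expanding $\ln(1+x) = x - \frac{x^2}{2} + O(x^3)$ gives
\[
n\ln\rho_n^* = a\sqrt n + \frac{a\ln n}{\sqrt n} - \frac{a^2}{2} + O\Big(\frac{1}{\sqrt n}\Big),
\]
where $\frac{a^2}{2} = \frac{e}{\pi}$. Exponentiating then gives
\[
(\rho_n^*)^{-n} = e^{e/\pi}\, e^{-\sqrt{2en/\pi}}\Big[1 - \sqrt{\tfrac{2e}{\pi n}}\ln n + O\big(\tfrac{1}{\sqrt n}\big)\Big].
\]
The main care point is this exponentiation: the correction term $\frac{\ln n}{\sqrt n}$ tends to zero, so $e^{-y} = 1 - y + O(y^2)$ applies with $y = \frac{a\ln n}{\sqrt n} + O(n^{-1/2})$. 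The resulting $O(\frac{\ln^2 n}{n})$ term is absorbed into $O(n^{-1/2})$. The $O(\frac{\ln n}{n})$ error from the first piece is absorbed the same way. Multiplying the two pieces gives the stated expansion, and the first equivalence in \eqref{h_n_prop_eqn_2} follows immediately.

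For the second equivalence, apply Theorem \ref{binet_theorem} in the form $n! = \sqrt{2\pi n}\,(n/e)^n[1+O(\frac1n)]$. This gives $\big(\frac{e}{n}\big)^n \sim \frac{\sqrt{2\pi n}}{n!}$, and $\frac{2}{\sqrt{3e}}\sqrt{2\pi n} = \sqrt{\frac{8\pi n}{3e}}$. The step requiring the most attention is the bookkeeping of constants in the first piece, since every $\ln\pi$ and $\ln 2$ must cancel correctly to leave exactly $\frac{2}{\sqrt{3e}}$.
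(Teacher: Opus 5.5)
Your proposal is correct and follows the paper's proof essentially step for step. The paper also evaluates $\ln h_n$ through $\mathcal{J}_n(\rho_n^*)$, combining the $m=-1$ case of Lemma \ref{ball asympototics}, \eqref{K minus ball}, and Lemma \ref{rho estimate lemma} (expanded via the Maclaurin series for $\ln(1+x)$); it then exponentiates and invokes Theorem \ref{binet_theorem} for the second equivalence, and your constants $\frac12\ln\frac{4}{3e}$ and $\frac{e}{\pi}$ and your check that $\rho_n^* > \rho_n$ for large $n$ agree with what the paper uses.
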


\begin{proof} Because $h_n = \mathcal{J}_n(\rho_n^*) = 2\pi \omega_{n-1} \big( \frac{K_{n+1}}{\omega_{n+1}} \big)^{\frac{n}{n+1}} \frac{1}{(\rho_n^*)^n}$,
\begin{align}\label{h_n_prop_pf_eqn_1}
\displaystyle \ln h_n = \ln(2\pi) + \ln \omega_{n-1} + \frac{n}{n+1} (\ln K_{n+1} - \ln \omega_{n+1}) - n\ln \rho_n^*.
\end{align}
Lemma \ref{rho estimate lemma} and the Maclaurin series $\sum_{k=1}^\infty \frac{(-1)^{k-1}}{k} x^k$ for $\ln(1 + x)$ imply 
\[
\ln \rho_n^* = \sqrt{\frac{2e}{\pi n}} - \frac{e}{\pi n} + \sqrt{\frac{2e}{\pi}} \frac{\ln n}{n^{3/2}} + O \Big( \frac{1}{n^{3/2}} \Big).
\]
Combining this with the asymptotic formulas for $\ln \omega_{n-1}$ and $\frac{n}{n+1} (\ln K_{n+1} - \ln \omega_{n+1})$ in Lemma \ref{ball asympototics} and \eqref{K minus ball} yields
\[
\ln h_n = - n\ln n + n - \sqrt{\frac{2en}{\pi}} + \frac{1}{2} \ln \Big( \frac{4}{3e} \Big) + \frac{e}{\pi} - \sqrt{\frac{2e}{\pi}} \frac{\ln n}{\sqrt{n}} + O \Big( \frac{1}{\sqrt{n}} \Big),
\]
which gives the asymptotic formula for $h_n$ above and implies the first equivalence in \eqref{h_n_prop_eqn_2}. Theorem \ref{binet_theorem} then implies the second equivalence in \eqref{h_n_prop_eqn_2}. \end{proof}

\noindent The best constant that may be valid in Theorem \ref{main theorem} is $\frac{\sigma_n}{\pi^n}$, corresponding to the antipodal map of the round sphere.   

\begin{proposition}\label{optimality_estimate}
As $n \to \infty$, the estimate in Proposition \ref{h_n_prop} for the constant in Theorem \ref{main theorem} is suboptimal by at most a factor of $\displaystyle \frac{2e^{\frac{e}{\pi}}}{\sqrt{6e}} e^{-\sqrt{\frac{2en}{\pi}}} \left( \sqrt{\frac{\pi}{2}} \right)^n \left( 2\pi n \right)^{\frac{1}{4}} \frac{1}{\sqrt{n !}}\left[1 - \sqrt{\frac{2e}{\pi}} \frac{\ln n}{\sqrt{n}} + O \Big( \frac{1}{n} \Big) \right]$. 
\end{proposition}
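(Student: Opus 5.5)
The plan is to compute the ratio $\frac{\sigma_n/\pi^n}{h_n}$ directly, using the asymptotic formula for $h_n$ from Proposition \ref{h_n_prop} and an asymptotic formula for $\sigma_n$ obtained from Lemma \ref{ball asympototics}. The claimed suboptimality factor in the statement should then come out as this ratio, up to a harmless rewriting of $n^n e^{-n}$ in terms of $n!$ via Theorem \ref{binet_theorem}. So the argument is a bookkeeping exercise in logarithms.

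First I will write $\sigma_n = 2\pi\omega_{n-1}$ and apply the case $m=-1$ of Lemma \ref{ball asympototics}:
\[
\ln \sigma_n = \ln(2\pi) + \tfrac{1}{2}\big[-n\ln n + \ln(2\pi e)n - \ln(2\pi) - \ln\pi + O(\tfrac1n)\big].
\]
Subtracting $n\ln\pi$ gives an expansion of $\ln(\sigma_n/\pi^n)$ of the form $-\tfrac12 n\ln n + \tfrac12 n\ln\big(\tfrac{2e}{\pi}\big) + \tfrac12\ln 2 + O(\tfrac1n)$. In parallel, I will take the expansion of $\ln h_n$ already obtained in the proof of Proposition \ref{h_n_prop}:
\[
\ln h_n = -n\ln n + n - \sqrt{\tfrac{2en}{\pi}} + \tfrac12\ln\big(\tfrac{4}{3e}\big) + \tfrac{e}{\pi} - \sqrt{\tfrac{2e}{\pi}}\tfrac{\ln n}{\sqrt n} + O\big(\tfrac{1}{\sqrt n}\big).
\]

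Next I will subtract the two expansions. The leading terms combine to $\tfrac12 n\ln n - \tfrac12 n + \tfrac12 n\ln(\tfrac{2}{\pi})$ plus constants, together with the $\sqrt n$ and $\tfrac{\ln n}{\sqrt n}$ corrections. Using Stirling's formula from Theorem \ref{binet_theorem}, $\tfrac12(n\ln n - n) = \tfrac12\ln n! - \tfrac14\ln(2\pi n) + O(\tfrac1n)$. This rewrites the difference in the form $\tfrac12\ln n! + n\ln\sqrt{2/\pi} + (\text{constants}) + \sqrt{2en/\pi} + \dots$. Exponentiating gives $\sigma_n/(\pi^n h_n)$ explicitly. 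Its reciprocal, $h_n/(\sigma_n/\pi^n)$, carries the factors $\big(\sqrt{\pi/2}\big)^n$, $(2\pi n)^{1/4}$, $\tfrac{1}{\sqrt{n!}}$ and $e^{-\sqrt{2en/\pi}}$, which is the form of the displayed expression. The constant should be $\tfrac{2e^{e/\pi}}{\sqrt{3e}}\cdot\tfrac{1}{\sqrt2} = \tfrac{2e^{e/\pi}}{\sqrt{6e}}$, coming from the $\tfrac12\ln 2$ in $\sigma_n$. Finally, I will exponentiate the correction term $-\sqrt{2e/\pi}\,\tfrac{\ln n}{\sqrt n}$ using $e^x = 1 + x + O(x^2)$ to obtain the bracketed factor.

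The main obstacle is tracking the error terms and constants consistently. In particular, I need to check that the remainder in the bracket is as stated. Proposition \ref{h_n_prop} only gives $O(n^{-1/2})$, so the stated $O(\tfrac1n)$ may need to be justified by carrying one more term in Lemma \ref{rho estimate lemma}, or by reading it loosely. I would first try to carry the $O(n^{-3/2})$ term in $\rho_n^*$ to see whether the $\tfrac{1}{\sqrt n}$ contribution to $\ln h_n$ cancels. If it does not, I would state the bound with $O(n^{-1/2})$, which still justifies the phrase "suboptimal by at most a factor of" asymptotically. A second, more routine check is the bookkeeping for the $\ln\pi$ and $\ln 2$ constants.
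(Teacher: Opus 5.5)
Your proposal is correct and follows essentially the paper's proof. The paper gets $\frac{\sigma_n}{\pi^n} = \sqrt{2}\,\Big(\sqrt{\frac{2e}{\pi n}}\Big)^n\Big[1+O\Big(\frac{1}{n}\Big)\Big]$ directly from $\frac{\sigma_n}{\pi^n} = \frac{2}{\Gamma(\frac{n+1}{2})\pi^{\frac{n-1}{2}}}$ and Theorem \ref{binet_theorem}, whereas you reach the same expansion via $\sigma_n = 2\pi\omega_{n-1}$ and Lemma \ref{ball asympototics}; both then divide the formula of Proposition \ref{h_n_prop} by it, with Stirling turning $\big(\frac{e}{n}\big)^{n/2}$ into $\frac{(2\pi n)^{1/4}}{\sqrt{n!}}$.

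Your worry about the remainder is justified. That division only yields $O(n^{-1/2})$ in the bracket, and $O(\frac{1}{n})$ cannot be rescued by cancellation: the bracket is the exponential of $-\sqrt{\frac{2e}{\pi}}\,\frac{\ln n}{\sqrt{n}}+\cdots$, whose exponent has no $\frac{\ln^2 n}{n}$ term, so the remainder is at least of order $\frac{\ln^2 n}{n}$. Your fallback statement with $O(n^{-1/2})$ is therefore the correct one.
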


\begin{proof} The identity $\frac{\sigma_n}{\pi^n} = \frac{2}{\Gamma(\frac{n+1}{2})\pi^{\frac{n-1}{2}}}$, the asymptotic formula $\sqrt{\left(\frac{n}{n-1}\right)^n} = \sqrt{e}\left[1 + O \big( \frac{1}{n} \big) \right]$, and Theorem \ref{binet_theorem} imply that 
\begin{align}\label{opt_est_pf_eqn_1}
\frac{\sigma_n}{\pi^n} = \sqrt{\frac{2}{e}} \left( \sqrt{\frac{2e}{\pi(n-1)}} \right)^n \left[1 + O \Big( \frac{1}{n} \Big) \right] = \sqrt{2} \left(\sqrt{\frac{2e}{\pi n}} \right)^n \left[1 + O \Big( \frac{1}{n} \Big) \right] .
\end{align}
Dividing the asymptotic formula for $h_n$ in Proposition \ref{h_n_prop} by \eqref{opt_est_pf_eqn_1} gives the result. \end{proof}

\begin{remark}\label{bezdek remark}
As mentioned in Remark \ref{bezdek bound},  in dimensions $d \geq 3$, Bezdek \cite[Theorem 6.2]{Bezdek2013} improves the constant in Theorem \ref{pal-firey} from $K_d = \frac{2}{\sqrt{3} d!}$ to $F_d = \sqrt{ \frac{3\pi^{d-3} (d+2)!!}{(d+1)^2 (d!!)^2[(d-1)!!]^3} }$ for even $d$ and $F_d = \sqrt{ \frac{3\pi^{d-3}(d+1)!!}{2^{d-2}(d!!)^5} }$ for odd $d$, where $d!!$ denotes the product of all integers in the interval $[1,d]$ with the same parity as $d$. Letting $\overline{\mathcal{J}}_n$ be the function that results from substituting $F_{n+1}$ for $K_{n+1}$ in the definition of $\mathcal{J}_n$, and letting $\overline{\rho}_n^*$ be the unique $\rho > 1$ with $\overline{\mathcal{J}}_n(\overline{\rho}_n^*) = \mathcal{I}_n(\overline{\rho}_n^*)$, we have $\overline{\rho}_n^* > \rho_n^*$, because $\overline{\mathcal{J}}_n(\rho) > \mathcal{J}_n(\rho)$ for all $\rho > 1$. In particular, $\overline{\rho}_n^* > \rho_n$ whenever $\rho_n^* > \rho_n$. As $d \to \infty$, Theorem \ref{binet_theorem} and the identities $(2m)!! = 2^m m!$ and $(2m-1)!! = \frac{(2m-1)!}{(2m-2)!!}$ imply
\[
\ln(d!!) = \left\{ \begin{array}{cc} \frac{1}{2} d\ln d - \frac{1}{2} d + \frac{1}{2} \ln d + \frac{1}{2} \ln \pi + O \big( \frac{1}{d} \big) & \textrm{for  } d \textrm{ even,}\\ 
\frac{1}{2} d\ln d - \frac{1}{2} d + \frac{1}{2} \ln d + \frac{1}{2} \ln 2 + O \big( \frac{1}{d} \big) & \textrm{for } d \textrm{ odd,} \end{array} \right. 
\]
and, therefore, 
\[
\ln F_{n+1} = \left\{ \begin{array}{cc} -n \ln n + \frac{1}{2} \ln \big( \frac{\pi e^2}{2} \big) n - \frac{7}{4} \ln n + \frac{1}{4} \ln \big( \frac{9}{8 \pi^3} \big) + O \big( \frac{1}{n} \big) & \textrm{for } n \textrm{ even,}\\ 
-n \ln n + \frac{1}{2} \ln(\pi e^2) n  - \frac{7}{4} \ln n + \frac{1}{4} \ln \big( \frac{9}{8 \pi^5} \big) + O \big( \frac{1}{n} \big) & \textrm{for } n \textrm{ odd.} \end{array} \right. 
\]
We have the following parallel to the asymptotic formula in \eqref{K minus ball}: 
\[
\frac{n}{n+1} \left( \ln F_{n+1} - \omega_{n+1} \right) = \left\{ \begin{array}{cc} - \frac{1}{2} n \ln n + \frac{1}{2} \ln \big( \frac{e}{4} \big) n - \frac{1}{4} \ln n + \frac{1}{4} \ln \big( \frac{9}{2 \pi^3 e^2} \big) + O \big( \frac{\ln n}{n} \big) & \textrm{for } n \textrm{ even,}\\ 
- \frac{1}{2} n \ln n + \frac{1}{2} \ln \big( \frac{e}{2} \big) n - \frac{1}{4} \ln n + \frac{1}{4} \ln \big( \frac{9}{8 \pi^5 e^2} \big) + O \big( \frac{\ln n}{n} \big) & \textrm{for } n \textrm{ odd.} \end{array} \right. 
\]
Letting $\overline{C}_n$ be the constant given by substituting $F_{n+1}$ for $K_{n+1}$ in the definition of $C_n$ in Lemma \ref{asymptotics_lemma_1}, as $n \to \infty$,
\[
\overline{C}_n = \left\{ \begin{array}{cc} \sqrt{\frac{e}{n}} \Big[ 1 + \frac{3}{4}\frac{\ln n}{n} + O \big( \frac{1}{n} \big) \Big] & \textrm{for } n \textrm{ even,}\\ 
\sqrt{\frac{2e}{n}} \Big[ 1 + \frac{3}{4}\frac{\ln n}{n} + O \big( \frac{1}{n} \big) \Big]  & \textrm{for } n \textrm{ odd.} \end{array} \right. 
\]
An analysis as in Lemma \ref{rho estimate lemma} then gives
\[
\overline{\rho}_n^* = \left\{ \begin{array}{cc} 1 + \sqrt{\frac{e}{n}} + \frac{3\sqrt{e}}{4} \frac{\ln n}{n^{3/2}} + O \big( \frac{1}{n^{3/2}} \big) & \textrm{for } n \textrm{ even,}\\ 
1 + \sqrt{\frac{2e}{n}} + \frac{3\sqrt{2e}}{4} \frac{\ln n}{n^{3/2}} + O \big( \frac{1}{n^{3/2}} \big) & \textrm{for } n \textrm{ odd.} \end{array} \right.
\]
Letting $\overline{h}_n$ be the common value $\overline{\mathcal{J}}_n(\overline{\rho}_n^*) = \mathcal{I}_n(\overline{\rho}_n^*)$, we then have
\[
\overline{h}_n = \left\{ \begin{array}{cc} e^{(e-1)/2} \big( \frac{9}{2\pi n} \big)^{1/4} \big( \sqrt{\frac{\pi}{2}} \big)^n \big( \frac{e}{n} \big)^n e^{-\sqrt{en}} \big[ 1 - \frac{3\sqrt{e}}{4} \frac{\ln n}{\sqrt{n}} + O \big( \frac{1}{\sqrt{n}} \big) \big] & \textrm{for } n \textrm{ even,}\\ 
e^e \big( \frac{9}{8\pi^3 n} \big)^{1/4} (\sqrt{\pi})^n \big( \frac{e}{n} \big)^n e^{-\sqrt{2en}} \big[ 1 - \frac{3\sqrt{2e}}{4} \frac{\ln n}{\sqrt{n}} + O \big( \frac{1}{\sqrt{n}} \big) \big] & \textrm{for } n \textrm{ odd.} \end{array} \right. 
\]
As a result,
\[
\frac{\overline{h}_n}{\left(\frac{\sigma_n}{\pi^n}\right)} \sim \left\{ \begin{array}{cc} e^{\frac{e-1}{2}} \left( \frac{9}{2} \right)^{\frac{1}{4}} \left( \frac{\pi}{2} \right)^n e^{-\sqrt{en}} \frac{1}{\sqrt{n !}}& \textrm{for } n \textrm{ even,}\\
e^e \sqrt{\frac{3}{4\pi}} \big( \frac{\pi}{\sqrt{2}} \big)^n e^{-\sqrt{2en}} \frac{1}{\sqrt{n !}} & \textrm{for } n \textrm{ odd.} \end{array} \right. 
\]
Asymptotically, the constants given by Bezdek's improvement of Theorem \ref{pal-firey} still decay at a rate approximately proportional to $\frac{1}{\sqrt{n !}}$ relative to those given by the antipodal map of the round sphere, but they are improved by a factor of order $n^{-\frac{1}{4}}\left( \sqrt{\frac{\pi}{2}} \right)^n e^{(\sqrt{\frac{2e}{\pi}} - \sqrt{e})\sqrt{n}}$ for $n$ even and $n^{-\frac{1}{4}}\left( \sqrt{\pi} \right)^n e^{(\sqrt{\frac{2e}{\pi}} - \sqrt{2e})\sqrt{n}}$ for $n$ odd relative to those in Proposition \ref{h_n_prop}. 
\end{remark}


\section{Intrinsic versus extrinsic displacement of fixed-point-free maps}
\label{int_ext_appendix}


In this appendix, we study the invariant defined for fixed-point-free maps of convex hypersurfaces in Propositions \ref{enclosed volume bound} and \ref{area bound proposition}. 

\begin{proposition}\label{appendix int ext prop}
Let $\alpha:M \to M$ be a fixed-point-free map of a closed, convex hypersurface $M^{n}$ in $\real^{n+1}$, let $\rho(\alpha) = \max\limits_{x \in M} \frac{d_{M}(x,\alpha(x))}{|\alpha(x) - x|}$, and suppose one of the following holds: \\
\indent \textbf{(a)} $M$ is smooth. \\ 
\indent \textbf{(b)} $n$ is even. \\ 
\indent \textbf{(c)} $\alpha$ is an involution. \\
Then, $\rho(\alpha) > 1$. 
\end{proposition}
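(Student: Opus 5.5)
The plan starts from the observation that $\rho(\alpha) \geq 1$ always holds, because $d_M(x,\alpha(x)) \geq |\alpha(x)-x|$. So it suffices to rule out $\rho(\alpha) = 1$. If $\rho(\alpha) = 1$, then for every $x \in M$ a minimizing path from $x$ to $\alpha(x)$ has length equal to the chord, so it is the segment itself. Hence $\overline{x\alpha(x)} \subset M$ for every $x$. Each of (a)--(c) will then be contradicted by comparing the chordal Gauss map $\Psi_\alpha$ of Lemma \ref{chordal gauss map} with a "normal-type" map of $M$.

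\textbf{Case (b).} Fix $p$ in the interior of the domain $\Omega$ enclosed by $M$, and let $\nu_p(x) = \frac{x-p}{|x-p|}$; radial projection gives $\deg \nu_p = 1$.

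First I claim $\Psi_\alpha(x) \neq \pm \nu_p(x)$ for every $x$. If $\Psi_\alpha(x) = \nu_p(x)$, the segment $\overline{x\alpha(x)}$ would continue the ray from $p$ beyond the boundary point $x$, hence leave $\Omega$. If $\Psi_\alpha(x) = -\nu_p(x)$, the segment would run from $x$ toward the interior point $p$, so it would immediately enter the interior of $\Omega$. Both contradict $\overline{x\alpha(x)} \subset M$.

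Since $\Psi_\alpha$ is never antipodal to $\nu_p$, it is homotopic to $\nu_p$. Since it never equals $\nu_p$, it is homotopic to $-\nu_p$, which has degree $(-1)^{n+1}$. Lemma \ref{chordal gauss map} independently gives $\deg \Psi_\alpha = (-1)^{n+1}$. Together these force $1 = (-1)^{n+1}$, which is false when $n$ is even.

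\textbf{Case (a).} This case covers all $n$, and I would argue via exposed faces. Suppose $M$ is $C^1$ and $\overline{x\alpha(x)} \subset M$. A supporting hyperplane at the midpoint of the segment contains the whole segment, so it supports $M$ at both endpoints. Smoothness makes supporting hyperplanes unique, so the Gauss map satisfies $\nu(\alpha(x)) = \nu(x)$.

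Now fix a unit vector $e$ and let $F_e = \{x \in M : \nu(x) = e\}$. This is the exposed face $M \cap H$, where $H$ is the supporting hyperplane with outer normal $e$; it is a nonempty compact convex set. By the identity above, $\alpha(F_e) \subset F_e$. Brouwer's fixed point theorem then gives a fixed point of $\alpha$, contradicting the hypothesis that $\alpha$ is fixed-point-free.

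\textbf{Case (c).} The argument of (b) already handles even $n$, so what remains is an involution with $n$ odd. This is where I expect the main obstacle; the degree count above gives no contradiction when $n$ is odd. I would try to exploit the equivariance $\Psi_\alpha(\alpha(x)) = -\Psi_\alpha(x)$ together with the Borsuk--Ulam property of $\alpha$ (Yang, via Hirsch--Milnor, as in Remark \ref{Yang discussion}).

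First attempt: the midpoint map $m(x) = \frac{x+\alpha(x)}{2}$ takes values in $M$ and satisfies $m \circ \alpha = m$. Choose $q \in M$ farthest from some fixed center; $q$ is an exposed point and cannot lie in the relative interior of any segment contained in $M$. I would try to prove that $m$ must hit $q$, using the Borsuk--Ulam conclusion for a suitable $\alpha$-odd map $g : M \to \real^n$. Natural candidates are built from $|x|^2 - |\alpha(x)|^2$ and the projection of $m(x)$ onto the tangent hyperplane at $q$. A hit would put $q$ in the interior of $\overline{x\alpha(x)}$, a contradiction.

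If this fails, the fallback is an equivariant version of the homotopy argument in (b). Here one compares $\Psi_\alpha$ with an $\alpha$-odd normalization of $\nu_p(x) - \nu_p(\alpha(x))$, and uses the fact that $\mathbb{Z}/2$-equivariant maps between free $S^n$'s have odd degree to obtain the missing parity contradiction.
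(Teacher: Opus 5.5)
Your reduction to $\overline{x\alpha(x)}\subset M$ for all $x$ is correct, and so are your Cases (b) and (a); both take a different route from the paper. For (b), the paper homotopes $\alpha$ to the identity along the chords and contradicts $\deg\alpha=-1$. Your comparison of $\Psi_\alpha$ with $\pm\nu_p$ is a hairy-ball form of the same parity obstruction, and it is equally valid. For (a), the paper applies Sard's theorem to the map from the unit tangent bundle to $S^n$: each $\Psi_\alpha(x)$ would be a critical value, yet $\Psi_\alpha$ is surjective. Your argument ($\nu\circ\alpha=\nu$, then Brouwer on the invariant exposed face $F_e$) is more elementary and needs only $C^1$ regularity, whereas Sard needs far more.

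The genuine gap is Case (c) for odd $n$, for which you give no proof. Your first attempt never produces an $\alpha$-odd map $g$ whose zeros force $m(x)=q$; the projection of $m(x)$ you mention is $\alpha$-invariant, not odd, and nothing you have forces $q\in m(M)$. The fallback cannot work as described. For odd $n$, $\Psi_\alpha$, $\nu_p$, $-\nu_p$ and the normalization of $\nu_p(x)-\nu_p(\alpha(x))$ all have degree $1$, so degree or parity bookkeeping yields no contradiction.

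The paper closes (c) with the Conner--Floyd coincidence theorem. A fixed-point-free involution agrees somewhere with any topologically standard involution, in particular with the radial involution $\beta_p$ through an interior point $p$. At a coincidence point $x$ the chord $\overline{x\alpha(x)}$ passes through $p$, so it is not contained in $M$.

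Alternatively, your own proof of (a) repairs the gap once smoothness is removed from it. Replace $F_e$ by a maximal proper face $G$ of $\overline{\Omega}$; one exists because strictly increasing chains of faces strictly increase dimension. Take $x\in\mathrm{relint}\,G$. A supporting hyperplane $H$ at the midpoint of $\overline{x\alpha(x)}$ contains the whole chord. So $\overline{\Omega}\cap H$ is a proper face containing a relative interior point of $G$; hence it contains $G$, and by maximality it equals $G$. Thus $\alpha(x)\in G$. Since $G$ is closed and $\mathrm{relint}\,G$ is dense in it, continuity gives $\alpha(G)\subseteq G$, and Brouwer gives a fixed point. This yields $\rho(\alpha)>1$ without any of the hypotheses (a)--(c).
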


\begin{proof} If $d_M(x,\alpha(x)) = |\alpha(x) - x|$, then $M$ contains the line segment from $x$ to $\alpha(x)$. If $M$ is smooth, this implies the unit vector $\frac{\alpha(x) - x}{|\alpha(x) - x|}$ is a critical value of the map taking each vector in the unit tangent bundle of $M$ to its parallel in the unit sphere. Sard's theorem implies the set of such critical values has measure zero. However, if $\rho(\alpha) = 1$, this would be the case for all $x \in M$, and the surjectivity of the chordal Gauss map in Lemma \ref{chordal gauss map} would imply $S^{n}$ consists entirely of critical values, a contradiction. This proves Case (a).
\smallskip 

If $n$ is even, and if $M$ contained the line segment from $x$ to $\alpha(x)$ for each $x \in M$, which would follow from $\rho(\alpha) = 1$ as above, translating from $\alpha(x)$ to $x$ along these segments would produce a homotopy from $\alpha$ to the identity map; however, Lemma \ref{hirsch-milnor} implies $\alpha$ has degree $-1$, a contradiction. Case (b) follows.
\smallskip 

Conner and Floyd proved that every fixed-point-free involution of a sphere coincides at some point with any involution topologically equivalent to the antipodal map \cite[Theorem 4.0]{ConnerFloyd1962}. Consequently, if $\alpha$ is an involution and $p$ is any point in the interior of the region bounded by $M$, there is an $x \in M$ such that $\alpha(x)$ agrees with the map that takes each $y \in M$ to the other point at which $\ell_{yp}$ intersects $M$. It follows that $M$ does not contain the line segment from $x$ to $\alpha(x)$, proving Case (c). \end{proof}
\noindent It is also immediate that $\rho(\alpha) > 1$ when $M$ is strictly convex; however, the infimum of $\rho(\alpha)$ over all fixed-point-free maps $\alpha$ from $M$ into itself may be equal to $1$ even if $M$ is smooth and strictly convex: a small rotation of the Hopf fibres of any odd-dimensional sphere gives a fixed-point-free map for which the ratio $\frac{d_{M}(x,\alpha(x))}{|\alpha(x) - x|}$ is uniformly small. More broadly, flowing for a short time along a smooth, nowhere-vanishing vector field gives a map with this property on any odd-dimensional smooth convex hypersurface. In the remaining cases, i.e., non-involutive maps of odd-dimensional convex hypersurfaces of low regularity, this may present an obstacle to proving that $\rho(\alpha) > 1$ by approximation from the cases covered above. 


\bibliographystyle{alpha}
\bibliography{bibliography}

\newcommand{\etalchar}[1]{$^{#1}$}
\begin{thebibliography}{ABN{\etalchar{+}}25}

\bibitem[ABN{\etalchar{+}}25]{ArmanBondarenkoNazarovPrymakRadchenko2025}
Andrii Arman, Andriy Bondarenko, Fedor Nazarov, Andriy Prymak, and Danylo
  Radchenko.
\newblock Small volume bodies of constant width.
\newblock {\em Int. Math. Res. Not. IMRN}, (4), 2025.
\newblock Paper No. rnaf020. 7 pp.

\bibitem[AK79]{AkbulutKirby1979}
Selman Akbulut and Robion Kirby.
\newblock An exotic involution of ${S}^4$.
\newblock {\em Topology}, 18(1):75--81, 1979.

\bibitem[Ale55]{Alexandrov1955}
A.~D. Alexandrov.
\newblock {\em Die innere {G}eometrie der konvexen {F}l\"{a}chen}.
\newblock Akademie--Verlag, Berlin, 1955.
\newblock xvii+522 pp.

\bibitem[Alz08]{Alzer2008}
Horst Alzer.
\newblock Inequalities for the volume of the unit ball in $\mathbb{R}^n$.
  {I}{I}.
\newblock {\em Mediterr. J. Math.}, 5(4):395--413, 2008.

\bibitem[AZ99]{AignerZiegler1999}
Martin Aigner and G\"{u}nter~M. Ziegler.
\newblock {\em Proofs from \textnormal{The Book}}.
\newblock Springer--Verlag, Berlin, 1999.
\newblock viii+199 pp. Corrected reprint of the 1998 original.

\bibitem[Ber77]{Berger1977}
Marcel Berger.
\newblock Volume et rayon d'injectivit\'{e} dans les vari\'{e}t\'{e}s
  riemanniennes de dimension $3$.
\newblock {\em Osaka Math. J.}, 14(1):191--200, 1977.

\bibitem[Ber80]{Berger1980}
Marcel Berger.
\newblock Une borne inf\'{e}rieure pour le volume d'une vari\'{e}t\'{e}
  riemannienne en fonction du rayon d'injectivit\'{e}.
\newblock {\em Ann. Inst. Fourier (Grenoble)}, 30(3):259--265, 1980.

\bibitem[Bes78]{Besse1978}
Arthur~L. Besse.
\newblock {\em Manifolds all of whose geodesics are closed}, volume~93 of {\em
  Ergebnisse der Mathematik und ihrer Grenzgebiete}.
\newblock Springer-Verlag, Berlin-New York, 1978.
\newblock ix+262 pp. With appendices by D. B. A. Epstein, J.-P. Bourguignon, L.
  B\'{e}rard-Bergery, M. Berger, and J. L. Kazdan.

\bibitem[Bez13]{Bezdek2013}
K\'{a}roly Bezdek.
\newblock {T}arski's plank problem revisited.
\newblock In {\em Geometry---intuitive, discrete, and convex}, volume~24 of
  {\em Bolyai Society Mathematical Studies}, pages 45--64. J\'{a}nos Bolyai
  Mathematical Society, Budapest, 2013.

\bibitem[Bin39]{Binet1839}
Jacques Philippe~Marie Binet.
\newblock M\'{e}moire sur les int\'{e}grales d\'{e}finies {E}ul\'{e}riennes et
  sur leur application \`{a} la th\'{e}orie des suites; ainsi qu'\`{a}
  l'\'{e}valuation des fonctions des grands nombres.
\newblock {\em J. \'{E}c. Polytech.}, 16(27):123--343, 1839.

\bibitem[Bor25]{Borisenko2025}
Alexander~A. Borisenko.
\newblock Rigidity of closed convex hypersurfaces in multidimensional spaces of
  constant curvature.
\newblock {\em J. Math. Phys. Anal. Geom.}, 21(3):267--275, 2025.

\bibitem[Bro19]{Brouwer1919}
L.~E.~J. Brouwer.
\newblock \"{U}ber die periodischen {T}ransformationen der {K}ugel.
\newblock {\em Math. Ann.}, 80(1):39--41, 1919.

\bibitem[BZ88]{BuragoZalgaller1988}
Yu.~D. Burago and V.~A. Zalgaller.
\newblock {\em Geometric inequalities}, volume 285 of {\em Grundlehren der
  mathematischen {W}issenschaften}.
\newblock Springer-Verlag, Berlin, 1988.
\newblock xiv+331 pp. Translated from the Russian by A. B. Sosinskiĭ. Springer
  Series in Soviet Mathematics.

\bibitem[Cau13]{Cauchy1813}
Augustin-Louis Cauchy.
\newblock Sur les polygones et les poly\`{e}dres; {S}econd {M}\'{e}moire.
\newblock {\em J. \'{E}c. Polytech.}, 9(16):87--98, 1813.

\bibitem[Cau32]{Cauchy1932}
Augustin-Louis Cauchy.
\newblock M\'{e}moire sur les fonctions de variables imaginaires et sur leurs
  d\'{e}veloppements en s\'{e}ries.
\newblock In {\em Oeuvres compl\`{e}tes d'{A}ugustin {C}auchy}, volume~13 of
  {\em 2e s\'{e}rie}, pages 176--203. Gauthier-Villars, Paris, 1932.

\bibitem[CF62]{ConnerFloyd1962}
P.~E. Conner and E.~E. Floyd.
\newblock Fixed point free involutions and equivariant maps. {I}{I}.
\newblock {\em Trans. Amer. Math. Soc.}, 105:222--228, 1962.

\bibitem[Cha67]{Chakerian1967}
G.~D. Chakerian.
\newblock Inequalities for the difference body of a convex body.
\newblock {\em Proc. Amer. Math. Soc.}, 18:879--884, 1967.

\bibitem[Cro68]{Crofton1868}
Morgan~W. Crofton.
\newblock On the theory of local probability, applied to straight lines drawn
  at random in a plane; the methods used being also extended to the proof of
  certain new theorems in the integral calculus.
\newblock {\em Philos. Trans. Roy. Soc. London}, 158:181--199, 1868.

\bibitem[Cro87]{Croke1987}
Christopher~B. Croke.
\newblock Lower bounds on the energy of maps.
\newblock {\em Duke Math. J.}, 55(4):901--908, 1987.

\bibitem[Cro88]{Croke1988}
Christopher~B. Croke.
\newblock Area and the length of the shortest closed geodesic.
\newblock {\em J. Differential Geom.}, 27(1):1--21, 1988.

\bibitem[Cro02]{Croke2002}
Christopher~B. Croke.
\newblock The volume and lengths on a three sphere.
\newblock {\em Comm. Anal. Geom.}, 10(3):467--474, 2002.

\bibitem[Cro08]{Croke2008}
Christopher~B. Croke.
\newblock Small volume on big $n$-spheres.
\newblock {\em Proc. Amer. Math. Soc.}, 136(2):715--717, 2008.

\bibitem[CS76]{CappellShaneson1976}
Sylvain~E. Cappell and Julius~L. Shaneson.
\newblock Some new four-manifolds.
\newblock {\em Ann. of Math.}, 104(1):61--72, 1976.

\bibitem[CV36]{CohnVossen1936}
Stefan Cohn-Vossen.
\newblock Bending of surfaces in the large.
\newblock {\em Uspekhi Mat. Nauk}, 1:33--76, 1936.

\bibitem[Fir65]{Firey1965}
William~J. Firey.
\newblock Lower bounds for volumes of convex bodies.
\newblock {\em Arch. Math.}, 16:69--74, 1965.

\bibitem[Gho19]{Ghomi2017}
Mohammad Ghomi.
\newblock Open problems in geometry of curves and surfaces.
\newblock 2019.
\newblock ghomi.math.gatech.edu/Papers/op.pdf.

\bibitem[GP91]{GrovePetersen1991}
Karsten Grove and Peter Petersen.
\newblock Manifolds near the boundary of existence.
\newblock {\em J. Differential Geom.}, 33(2):379--394, 1991.

\bibitem[Gra04]{Gray2004}
Alfred Gray.
\newblock {\em Tubes}.
\newblock Birkh\"{a}user Verlag, Basil, second edition, 2004.
\newblock vix+280 pp. With a preface by Vicente Miquel.

\bibitem[Gro83]{Gromov1983}
Mikhael Gromov.
\newblock Filling {R}iemannian manifolds.
\newblock {\em J. Differential Geom.}, 18(1):1--147, 1983.

\bibitem[Hel99]{Helgason1999}
Sigurdur Helgason.
\newblock {\em The {R}adon transform}, volume~5 of {\em Progr. Math.}
\newblock Birkh\"{a}user Boston, Inc., Boston, MA, second edition, 1999.
\newblock xiv+188 pp.

\bibitem[Her43]{Herglotz1943}
G.~Herglotz.
\newblock \"{U}ber die {S}tarrheit der {E}ifl\"{a}chen.
\newblock {\em Abh. Math. Sem. Hansischen Univ.}, 15:127--129, 1943.

\bibitem[HM64]{HirschMilnor1964}
Morris~W. Hirsch and John Milnor.
\newblock Some curious involutions of spheres.
\newblock {\em Bull. Amer. Math. Soc.}, 70:372--377, 1964.

\bibitem[How06]{Howard2006}
Ralph Howard.
\newblock Convex bodies of constant width and constant brightness.
\newblock {\em Adv. Math.}, 204(1):241--261, 2006.

\bibitem[Ker19]{Kerekjarto1919}
B.~von Ker\'{e}kj\'{a}rt\'{o}.
\newblock \"{U}ber die periodischen {T}ransformationen der {K}reisscheibe und
  der {K}ugelfl\"{a}che.
\newblock {\em Math. Ann.}, 80(1):36--38, 1919.

\bibitem[KR97]{KlainRota1997}
Daniel~A. Klain and Gian-Carlo Rota.
\newblock A continuous analogue of {S}perner's theorem.
\newblock {\em Comm. Pure Appl. Math.}, 50(3):205--223, 1997.

\bibitem[Liv60]{Livesay1960}
G.~R. Livesay.
\newblock Fixed point free involutions on the $3$-sphere.
\newblock {\em Ann. of Math.}, 72:603--611, 1960.

\bibitem[LL99]{LagrangeLegendre1799}
Joseph-Louis Lagrange and Adrien-Marie Legendre.
\newblock Rapport sur deux m\'{e}moires d'analyse du professeur {B}urmann.
\newblock {\em M\'{e}m. Inst. Natl. Sci. Arts}, 2:13--17, 1799.

\bibitem[LZ25]{LucardesiZucco2025}
Ilaria Lucardesi and Davide Zucco.
\newblock Three quantitative versions of the {P}\'al inequality.
\newblock {\em The Journal of Geometric Analysis}, 35(3), 2025.

\bibitem[Min03]{Minkowski1903}
Hermann Minkowski.
\newblock Volumen und {O}berfl\"ache.
\newblock {\em Math. Ann.}, 57(4):447--495, 1903.

\bibitem[Min04]{Minkowski1904}
Hermann Minkowski.
\newblock {\"U}ber die {K}\"orper konstanter {B}reite.
\newblock {\em Mat. Sb.}, 25:505--508, 1904.

\bibitem[MM97]{MartinezMaure1997}
Yves Martinez-Maure.
\newblock Sur les h\'{e}rissons projectifs (enveloppes param\'{e}tr\'{e}es par
  leur application de {G}auss).
\newblock {\em Bull. Sci. Math.}, 121(8):585--601, 1997.

\bibitem[Oss79]{Osserman1979}
Robert Osserman.
\newblock Bonnesen-style isoperimetric inequalities.
\newblock {\em Amer. Math. Monthly}, 86(1):1--29, 1979.

\bibitem[Pai97]{Paiva97}
Juan Carlos~\'Alvarez Paiva.
\newblock Total mean curvature and closed geodesics.
\newblock {\em Bull. Belg. Math. Soc. Simon Stevin}, 4(3):373--377, 1997.

\bibitem[P{\'a}l21]{Pal1921}
Julius P{\'a}l.
\newblock Ein {M}inimumproblem f\"{u}r {O}vale.
\newblock {\em Math. Ann.}, 83(3-4):311--319, 1921.

\bibitem[Pog73]{Pogorelov1973}
A.~V. Pogorelov.
\newblock {\em Extrinsic geometry of convex surfaces}, volume~35 of {\em
  Translations of Mathematical Monographs}.
\newblock American Mathematical Society, Providence, 1973.
\newblock vi+669 pp.

\bibitem[Sac60]{Sacksteder1960}
Richard Sacksteder.
\newblock On hypersurfaces with no negative sectional curvatures.
\newblock {\em Amer. J. Math.}, 82:609--630, 1960.

\bibitem[San04]{Santalo2004}
Luis~A. Santal\'{o}.
\newblock {\em Integral geometry and geometric probability}.
\newblock Cambridge Math. Lib. Cambridge University Press, Cambridge, second
  edition, 2004.
\newblock xx+404 pp. With a foreword by Mark Kac.

\bibitem[Sch93]{Schneider1993}
Rolf Schneider.
\newblock {\em Convex bodies: the {B}runn-{M}inkowski theory}, volume~44 of
  {\em Encyclopedia of Mathematics and Its Applications}.
\newblock Cambridge University Press, Cambridge, 1993.
\newblock xiv+490 pp.

\bibitem[Sen72]{Senkin1972}
E.~P. Sen'kin.
\newblock Rigidity of convex hypersurfaces.
\newblock {\em Ukrain. Geometr. Sb.}, 12:131--152, 1972.

\bibitem[Tre85]{Treibergs1985}
Andrejs Treibergs.
\newblock Estimates of volume by the length of shortest closed geodesics on a
  convex hypersurface.
\newblock {\em Invent. Math.}, 80(3):481--488, 1985.

\bibitem[Yan54]{Yang1954}
Chung-Tao Yang.
\newblock On theorems of {B}orsuk--{U}lam, {K}akutani--{Y}amabe--{Y}ujob\^{o}
  and {D}yson. {I}.
\newblock {\em Ann. of Math.}, 60:262--282, 1954.

\end{thebibliography}

\end{document}